\renewcommand{\paragraph}[1]{\medskip\noindent{\bf$\bullet$  #1. }}
\newenvironment{aligne}[1]{ \everymath={\displaystyle}\begin{array}{#1}}{\end{array}}
\numberwithin{equation}{section} 
\newtheorem{theorem}{Theorem}
\newtheorem{prop}{Proposition}[section]
\newtheorem{corollary}[prop]{Corollary}
\newtheorem{lemma}[prop]{Lemma}
\newtheorem{remark}{Remark}[section]\theoremstyle{remark}
\newtheorem{definition}{Definition}[section]\theoremstyle{definition}
\newtheorem{assumption}{Assumption}
\newcommand{\gr}[1]{{\mathbf{#1}}}
\renewcommand{\div}{{\rm div}}
\newcommand{\ind}{\mathbbm{1}}
\newcommand{\e}{\varepsilon}
\newcommand{\para}{{\scriptstyle \parallel}}
\newcommand{\V}{{\hat{\mathcal{V}}}}
\newcommand{\LL}{{\mathcal L}}
\newcommand{\ud}{\hspace{1pt}\mathrm{d}}
\newcommand{\step}[1]{\medskip \noindent {\bf\textit{ Step} #1.}}
\DeclareMathOperator{\tr}{{\rm tr}}
\newcommand{\DDelta}{\Delta\hspace{-6pt}\Delta}
\renewcommand{\>}{\rangle}
\newcommand{\<}{\langle}
\author{Le Bihan, Corentin}
\title{Around the Quantum Lenard-Balescu equation}
\begin{document}
	\maketitle
	
		\begin{abstract}
		In the mean-field regime, a gas of quantum particles with Boltzmann statistics can be described by the Hartree-Fock equation. This dynamics becomes trivial if the initial distribution of  particle is invariant by translation. However, the first correction is given on time of order $O(N)$ by the quantum Lenard--Balescu equation. In the first part of the present article, we justify this equation until time of order $O((\log N)^{1-\delta})$ (for any $\delta\in(0,1)$).
		
		A similar phenomenon exists in the classical setting (with a similar validity time obtained by Duerinckx \cite{Duerinckx}). In a second time, we prove the convergence for dimension $d\geq 2$ of the solutions of the quantum Lenard--Balescu equation to the solutions of its classical counterpart in the semi-classical limit. This problem can be interpreted as a grazing collision limit: the quantum Lenard--Balescu equation looks like a cut-off Boltzmann equation, when the classical one looks like the Landau equation.
	\end{abstract}
	
	\tableofcontents
	
	\section{Introduction}
	\subsection{General presentation}
	
	Fix $L>0$. We denote $\mathbb{T}_L$ the torus of length $L$,  $(\mathbb{R}/L\mathbb{Z})^d$ (with $d\geq 2$) and $\mathfrak{H}_L$ the Hilbert space $L^2(\mathbb{T}_L)$.	We want to describe $N$ particles in $\mathbb{T}_L$ interacting {\it via} pairwise potential energy. In the following, we denote $\mathbb{Z}^d_L:=(2\pi L \mathbb{Z})^d$ the dual space of $\mathbb{T}_L$.
	
	We fix $\mu$, $L$ and $N$ such that $\mu = NL^{-d}$. The state  $\psi_N(t)\in\mathfrak{H}_L^N$  follows the Schrodinger dynamics
	\begin{equation}\label{eq:Schrodinger}
	i\hbar\partial_t \psi_N(t)= \frac{\hbar^2}{2}\sum_{j=1}^{N}- \Delta_j\psi_N(t) +\frac{1}{\mu}\sum_{j<k}^N V_{j,k}\psi_N(t)
	\end{equation}
	with $\Delta_j$ the Laplacian with respect to the $j$-th variable and $V_{j,k}$ the multiplication by $\mathcal{V}(x_j-x_k)$. For the moment, one can suppose $\mathcal{V}$ smooth, with spherical symmetry and compact support.
	
	With a statistic point of view, the system is described by a density matrix $F_N\in\mathcal{L}^1(\mathfrak{H}_L)$ (the space of trace class operators, see Section 1.9 of \cite{Golse} for the definition and basic properties). An easy way to introduce $F_N$ is to understand it as a Hilbert-Schmidt operator with Kernel $F_N(x_{[1,N]},y_{[1,N]})\in L^2(\mathfrak{H}_L^{2N})$, with $x_{[1,N]}:=(x_1,\cdots,x_N)\in\mathbb{T}^N_L$
	\begin{equation}\label{eq:def matrice}
	F_N(x_{[1,N]},y_{[1,N]}) := \int \psi(x_{[1,N]})\bar{\psi}_N(y_{[1,N]}) \pi\left(\ud\psi\right)	
	\end{equation}
	where $\pi$ is a probability measure on $\{\psi\in L^2(\mathfrak{H}_L^N),\|\psi\|=1\}$.
	
	As particles are exchangeable, the matrix $F_N$ has to verify the following (weak) symmetry condition:
	\begin{equation}
	\forall\sigma\in\mathfrak{S}_N,~F_N(x_{\sigma(1)},\cdots,x_{\sigma(N)},y_{\sigma(1)},\cdots,y_{\sigma(N)}) = F_N(x_1,\cdots,x_N,y_1,\cdots,y_N).
	\end{equation}
	
	\begin{remark}
		We say that $F_N$ is 
		\begin{itemize}
			\item Fermionic if $\pi$ has support included in (with $\e(\sigma)$ the signature of $\sigma\in\mathfrak{S}_n$)
			\begin{gather*}\{\psi\in \mathfrak{H}_L^N,~\psi(x_{\sigma(1)},\cdots,x_{\sigma(N)}) = \epsilon(\sigma)\psi(x_1,\cdots,x_N)\},\\
			{\it id~est}~F_N(x_{1},\cdots,x_{N},y_{\sigma(1)},\cdots,y_{\sigma(N)}) =\epsilon(\sigma) F_N(x_1,\cdots,x_N,y_1,\cdots,y_N),\end{gather*}
			\item Bosonic if $\pi$ has support included in 
			\begin{gather*}\{\psi\in \mathfrak{H}_L^N,~\psi(x_{\sigma(1)},\cdots,x_{\sigma(N)}) = \psi(x_1,\cdots,x_N)\},\\
			{\it id~est}~F_N(x_{1},\cdots,x_{N},y_{\sigma(1)},\cdots,y_{\sigma(N)}) =F_N(x_1,\cdots,x_N,y_1,\cdots,y_N).
			\end{gather*}
		\end{itemize}
	\end{remark}

	Using \eqref{eq:Schrodinger} and \eqref{eq:def matrice}, the density matrix $F_{\hbar,N}(t)$ is solution of the Von Neumann equation
	\begin{equation}\label{eq:Von Neuman}
	i\hbar\partial_t F_{\hbar,N}(t) = \frac{\hbar^2}{2}\sum_{j=1}^{N} \left[-\Delta_j,F_{\hbar,N}(t)\right]+\frac{1}{\mu}\sum_{j<k}^N \left[V_{j,k},F_{\hbar,N}(t)\right], ~ F_{\hbar,N}(t=0):=F_{N,0}.
	\end{equation}

	At time $0$, we make a \emph{chaos assumption}, {\it i.e.} we suppose the factorization of the density matrix:  there exists $F_0\in\mathcal{L}^1(\mathfrak{H}_L)$, non-negative, and with $\tr F_0 = L^d$, such that the density matrix $F_{N,0}$ verifies
	\begin{equation}
	F_{N,0}(x_1,\cdots,x_n,y_1,\cdots,y_n) = \prod_{i= 1}^N L^{-d}F_0(x_i,y_i).
	\end{equation}
	We choose $\tr F_0 = L^d$ in order to have formally $\|F_0\|_{L^\infty}$ of order $1$ (it will be useful when we will take a large box limit $L\to\infty$).

		\begin{remark}
		Let  $F_N$ be a Bosonic density matrix.
		\begin{itemize}
			\item if there exists $F_1\in \mathcal{L}^1(\mathfrak{H}_L)$ such that $F_N = F_1^{\otimes N}$, then there exists $\psi\in\mathfrak{H}$ such that $F_1(x,y)=\psi(x)\bar{\psi}(y)$,
			\item if $F_1$ is translation invariant, then there exists a family $(c_k)_{k}$ such that
			\[F_1(x,y) = \sum_{k\in\mathbb{Z}_L}c_ke^{ik(x-y)}.\]
		\end{itemize}
		We deduce that if $F_N$ is symmetric and invariant by translation, then there exists $k\in\mathbb{Z}_L^d$ such that
		\[F_{\hbar,N}(x_{[1,N]},y_{[1,N]}) = \prod_{j=1}^N\frac{1}{L^{d}}e^{ik\cdot(x_j-y_j)},\]
		which is a fix point of the mean-field dynamic. This shows that our results cannot be applied to Bosons.
		
		Note that the density $F_1^{\otimes N}$ is not fermionic, hence our result cannot be applied to Fermions.
	\end{remark}
	
	\paragraph{Derivation of an effective equation on the long time scale}	
	In that setting, one can prove (see \cite{Golse,PPS}) that when $N\to\infty$ with $\mu = NL^{-d}$ (and $L$ fixed), the density matrix of a typical particle $F_{\hbar,N}^{(1)}(t,x_1,y_1)$, where
	\begin{equation}
	\forall n\in[1,N],~F_{\hbar,N}^{(n)}(t,x_{[1,n]},y_{[1,N]}):=\int F_{\hbar,N}(t,x_{[1,n]},z_{[n+1,N]},y_{[1,n]},z_{[n+1,N]})\ud{z_{[n+1,N]}}
	\end{equation}
	converges to $L^{-d}F_\hbar(t,x,y)$, the solution of the solution of Hartree equation
	\begin{equation}
	\left\{\begin{split}
	i\hbar\partial_t F_\hbar(t) &= \frac{\hbar^2}{2}\left[-\Delta,F_\hbar\right]+\left[\tilde{\mathcal{V}},F_\hbar\right],~F_\hbar(t= 0)= F_0\\
	\tilde{\mathcal{V}}(t,x) &= \int \mathcal{V}(x-y)F_\hbar(t,y,y)\ud{y}.
	\end{split}\right.,
	\end{equation}
	where $\tilde{\mathcal{V}}$ is an effective potential.
	
	In \cite{PPS}, the authors prove a stronger result, the propagation of chaos: if at time $0$ the system is distributed with respect to $F_{0,N}:= L^{-Nd}F_0^{\otimes N}$, then $\forall t\geq 0, j\in [1,N]$,
	\begin{equation}
	\sup_{y_{[1,j]}\in\mathbb{T}^j_L}\int 	\left|\left(F_{\hbar,N}^{(j)}-F_\hbar^{\otimes j}\right)(t,x_{[1,j]},x_{[1,j]}+y_{[1,j]})\right|\ud{x_{[1,j]}} \leq C e^{C/\hbar t}\frac{j^2}{\mu}
	\end{equation}
	where the constant $C$ does not depend on $\hbar$. Note that the norm is the $\mathcal{L}^1(\mathfrak{H}_L^j)$-operator norm.
	
	If we choose an initial data invariant by translation ($F_0(x,y)= G(x-y)$), the effective potential becomes constant. Hence, at the first order, the density is constant at the first order.
	
	However, the error terms  $F_{\hbar,N}^{(1)}(t)-F_\hbar(t)$, $F_{\hbar,N}^{(2)}(t)-F_\hbar^{\otimes 2}(t)$ are of order $O(1/\mu)$. Hence, it is natural to guess if one can obtain a corrector on long time (of order $O(N)$). Denoting $\Phi_{\hbar,N,L}(t,v)$ the Wigner transform of $F_{\hbar,N}^{(1)}$, scaled in time
	\begin{equation}
	\forall v\in \mathbb{Z}^d_{L/\hbar},~\Phi_{\hbar,N,L}(t,v):=\frac{L^d}{(2\pi\hbar)^d}\int_{\mathbb{T}_{2L}} F_{\hbar,N}^{(1)}(Nt,\tfrac{y}{2},-\tfrac{y}{2}) e^{i\frac{v}{\hbar}y}\ud{y}
	\end{equation}
	
	In limit $N,\mu\to\infty$, $L^d\mu = N$, one can prove the formal convergence (see Theorem \ref{thm:limite champ moyen} for a precise statement) of  $\Phi_{\hbar,N,L}(\mu t,v)$ to the solution of the Quantum-Lenard-Balescu equation:  
	for $c_d:=\frac{2}{(2\pi)^d}$
	\begin{gather}\forall t\geq 0,~\forall v_1\in\mathbb{R}^d,~\partial_t \Phi_\hbar(t,v_1)=Q^\hbar_{LB}(\Phi_\hbar(t))(v_1).\label{eq:equation de Lenard Balescu Quantique}\\
	Q^\hbar_{LB}(\Phi)(v_1)=\frac{c_d}{\hbar^2}\int\frac{ \hat{\mathcal{V}}(k)^2\delta_{ k\cdot(v_2-v_1-\hbar k)}}{|\e_\hbar(\Phi,k,v_1)|^2}(\Phi(v_2-\hbar k)\Phi(v_1+\hbar k)-\Phi(v_1)\Phi(v_2))\ud k\ud{v_2}\\
	\e_\hbar(\Phi,k,v) = 1+{\hat{\mathcal{V}}(k)}\int \frac{\Phi(v_*)-\Phi(v_*-\hbar k)}{\hbar k\cdot(v_*-v-\hbar k)-i0}\ud{v_*}.
	\end{gather}

	We precise that we take the convention for the Fourier transform
	\begin{equation}
	\hat{\mathcal{V}}(k) = \int \mathcal{V}(x)e^{-ikx}\ud x.
	\end{equation}	
	
	This equation looks like the Boltzmann equation, with a dynamical screening $|\e_\hbar(\Phi,k,v_1)|^{-2}$, and verifies formally the same conservation laws and H-theorem
	\[\frac{\ud}{\ud t} \int \Phi_\hbar(t,v_1) \left(\begin{matrix}
	1\\v_1\\|v_1|^2
	\end{matrix}\right)\ud{v_1}=0,~\frac{\ud}{\ud t} \int \Phi_\hbar(t,v_1)\log\Phi_\hbar(t,v_1)\ud{v_1}\leq 0.\]
	Hence, it describes an quantitative irreversibility coming from the mixing of the system.
	
	This equation has been first formally derived by Balescu \cite{Balescu1} for particles in the Bose-Einstein or Fermi-Dirac statistic (for these particles, he found a cubic version of the equation).
	
	Note that in the weak coupling scaling, a quantum system can also be described by a \emph{collisional kinetic} equation. In that setting, the interaction are given by the short range potential: the density matrix respect the equation
	\begin{equation}
	i\hbar\partial_t F_N = -\hbar^2\sum_{i= 1}^N\left[\Delta_i,F_N\right]+\sqrt{\hbar}\sum_{i<j}^N\left[\mathcal{V}\left(\tfrac{x_i-x_j}{\hbar}\right),F_N\right].
	\end{equation}
	with the scaling $N\hbar^3 = 1$ (in dimension $3$) and $L=1$. The equation has been first formally derived by Nordheim \cite{Nordhiem}, and Uehling and Uhlenbeck \cite{UU} (see also \cite{Pulvirenti}).
	
	\paragraph{The classical counterpart}	
	A similar phenomenon exists in the classical setting.
	
	Consider a gas of $N$ particles inside the domain $\mathbb{T}_L$, with density $\mu = N/L^d\to\infty$. Denoting their coordinates $(x_1,\cdots,x_N,v_1,\cdots,$ $v_N)$, particles follow the classical Hamiltonian dynamic linked to the energy 
	\begin{equation}
	\mathcal{H}_N:=\sum_{i= 1}^N\frac{|v_i|^2}{2}+\frac{1}{N}\sum_{i<j}^N\mathcal{V}(x_i-x_j).
	\end{equation}
	We denote $F_N(t,x_{[1,n]},v_{[1,n]})$ the distribution of particle. It is supposed  initially factorized:
	\[\tilde{F}_N(t=0,x_{[1,n]},v_{[1,n]}):=\tilde{F}_0(x_1,v_1)\cdots \tilde{F}_0(x_N,v_N)\]
	for some probability density $F_0$ on $\mathbb{T}_L\times\mathbb{R}^d$. Then one can show that the first marginal 
	\[\tilde{F}_N^{(1)}(t,x_1,v_1) := \int \tilde{F}_N(t,x_{[1,n]},v_{[1,n]})\ud{x_{[2,n]}}\ud{v_{[2,n]}}\]
	converges when $N$ goes infinity to $\tilde{F}(t,x,v))$, the solution of the Vlasov equation (see Braun et Hepp \cite{BH}, Dobrushin \cite{Dobrushin}, and a review of Golse \cite{Golse2})
	\begin{equation}
	\left\{\begin{split}
	\partial_t \tilde F(t) &= -v\cdot\nabla_x\tilde{F}(t)+\nabla_x\tilde{\mathcal{V}}\cdot\nabla_v\tilde{F}(t),~\tilde{F}(t= 0)= \tilde{F}_0\\
	\tilde{\mathcal{V}}(t,x) &= \int \mathcal{V}(x-y)\tilde{F}(t,y,v)\ud{y}\ud{v}.
	\end{split}\right.,
	\end{equation}
	
	If we suppose that the initial density is inariant by translation, $F_0(x,v):=L^{-d}\Phi_0(v)$, then effective potential $\tilde{\mathcal{V}}$ become constant, and the Vlasov dynamics is trivial. As in the quantum case, one show that the scaled density	
	\[\Phi_N(t,v_1):=\int F_N(Nt,x_{[1,n]},v_{[1,n]})\ud{x_{[1,n]}}\ud{v_{[2,n]}}\]
	converges as $N\to\infty$, $NL^d=1$ to $\Phi(t,v_1)$, solution of the (classical) Lenard-Balescu equation
	\begin{gather}
	\label{eq:Lenard--Balescu classique} \partial_t \Phi = Q_{LB}^0(\Phi)(v_1)\\
	Q_{LB}^0(\Phi)(v_1):=\nabla_{v_1}\cdot\int B(\Phi,v_1,v_2)\left(\Phi(v_2)\nabla \Phi(v_1)-\Phi(v_1)\nabla \Phi(v_2)\right)\ud{v_2}\\
	B(\Phi,v_1,v_2):= c_d\int  \frac{|\V(k)|^2k\otimes k}{|\e_0(\Phi,k,v)|^2}\delta_{k\cdot(v_1-v_2)}\ud{k} \\
	\e_0(\Phi,k,v) := 1+\V(k)\int \frac{k\cdot\nabla \Phi(v_*)}{k\cdot(v-v_*)-i0}\ud{v_*}.
	\end{gather}
	
	The equation has been derived simultaneously in the 60s by Guernsey \cite{Guernsey, Guernsey2}, Lenard \cite{Lenard} and Balescu \cite{Balescu2}. In the former two derivations, the authors used a truncation of the BBGKY hierarchy. This strategy has been by adapted by Duerinckx \cite{Duerinckx} and Duerinckx--Saint-Raymond \cite{DSR} in order to obtain a consistency result. Note that in the first result, the author reach the time $O((\log N)^{1-\delta})$ (compare to $O(N)$ of the kinetic time), when in \cite{DSR}, the authors reaches the longer time $O(N^\delta)$ in a linear setting (where $\delta\in(0,1)$ is small enough). 
	
	The classical Lenard-Balescu equation has been studied by Duerinckx and Winter in \cite{DW}.

	\paragraph{Semi-classical limit of the quantum Lenard--Balescu equation} 	
	In the semi-classical limit $\hbar\to0$, one can proof that the quantum system "converge" to the classical one. More precisely, consider $F_\hbar(t,x,y)$ solution of the Hartree equation with initial data $F_{0,\hbar}(x,y)$. At time $0$, we suppose that the Wigner transform of $F_{0,\hbar}$ defined by
	\begin{equation}
	\tilde{F}_{\hbar,0}(x,v):=\frac{1}{(2\pi\hbar)^d}\int F_{0,\hbar}(x+\tfrac{y}{2},x-\tfrac{y}{2}) e^{i\frac{v}{\hbar}y}\ud{y}
	\end{equation}
	converges to some distribution  $\tilde{F}_0(x,v)$. Then, the Wigner transform of $F_\hbar(t,x,y)$ converge to $\tilde{F}(t,x,v)$, the solution of the Vlasov equation with initial data $F_0(x,v)$.

	One can ask a similar question for the quantum Lenard--Balescu equation: for $\Phi_\hbar(t,v_1)$ the solutions of the quantum Lenard--Balescu \eqref{eq:equation de Lenard Balescu Quantique} with initial data $\Phi_0(v_1)$ equation converge to $\Phi(t,v_1)$, the solution of the classical Lenard--Balescu equation \eqref{eq:Lenard--Balescu classique} with the same initial data.
	
	This problem looks like a grazing collision limit: we want to link a Boltzmann like operator
	\begin{equation}
	Q^\hbar(\Phi)(v_1)=\int_{\mathbb{S}^{d-1}\times\mathbb{R}^d} \left(\Phi\left(\tfrac{v_1+v_2}{2}+\tfrac{|v_1-v_2|}{2}\sigma\right)\Phi\left(\tfrac{v_1+v_2}{2}-\tfrac{|v_1-v_2|}{2}\sigma\right)-\Phi(v_1)\Phi(v_2)\right) B_\hbar(\Phi,v_1,v_2,\sigma)\ud{\sigma}\ud{v_2}
	\end{equation}
	toward a Landau type operator
	\begin{equation}
	Q^0(\Phi)(v_1)=\nabla_1\cdot\int_{\mathbb{R}^d} \left(\nabla\Phi\left(v_1\right)\Phi\left(v_2\right)-\Phi(v_1)\nabla\Phi(v_2)\right) B_0(\Phi,v_1,v_2)\ud{v_2}.
	\end{equation}
	if the collision kernel $B_\hbar(\Phi,\tfrac{v_1+v_2}{2},\tfrac{v_1-v_2}{2},\sigma)$ concentrates on the $\sigma$ with $\sigma\cdot\frac{v_1-v_2}{|v_1-v_2|}\simeq 1$. Here, we use the variable $\sigma := \frac{2\hbar k +v_1-v_2}{|v_1-v_2|}$, which is common in the literature.
	
	The rigorous analysis of the grazing collision limit goes back to the classical result of Arsenev and Buryak in \cite{AB}, and a general setting as been treated by Alexandre and Villani in \cite{AV}. However, the solutions consider by these two authors are very weak (they are called the \emph{renormalized solution}). As the Lenard-Balescu kernel $B_\hbar(\Phi_\hbar(t),v_1,v_2,\sigma)$ depends on the solution $\Phi_\hbar(t)$, one needs a strong notion of solution. 
	
	In \cite{He}, He proved the strong convergence of the solution of the Boltzmann equation toward the solution of the Landau equation, in the case where the kernel $B_\hbar(\Phi_\hbar(t),v_1,v_2,\sigma)$ is independent of $\Phi_\hbar(t)$ and $\frac{v_1+v_2}{2}$, and that is not integrable with respect to the variable $\sigma$ (these kernels are said \emph{uncutoff}). In that case, the Boltzmann operator acts like a fractional Laplacian, bringing some additional regularity.
	
	Note also the work of He, Lu, Pulvirenti and Ma around the classical limit of the quantum Boltzmann equation \cite{HLP, HLPZ}.
	
	In the case of Lenard--Balescu kernel, there is \emph{cutoff}: for $\V\in L^1\cap L^\infty(\mathbb{R}^3)$, the integral
	$\int \V(k)^2\delta_{k\cdot(v_2-v_1-\hbar k)}\ud{k} $ is always finite.

	\subsection{Statement of the results}
	
	\subsubsection{Short time validity of the quantum Lenard--Balescu equation}
		We begin by justifying the quantum Lenard--Balescu equation. The main goal would be reaching the kinetic time $O(N)$, but this is for the moment out of reach. However, we are able to propose a partial result, up to some time $O((\log N)^{(1-\delta)})$ for any $\delta\in(0,1)$.
	
		We consider an interaction potential $\mathcal{V}$ satisfying the following assumption:
		\begin{assumption}
			The interaction potential $\mathcal{V}$ is smooth, radial and with compact support.
		\end{assumption}

		\begin{theorem}\label{thm:limite champ moyen}
			Consider a density $\Phi_0$ such that 
			\begin{equation}
			\Phi_0\geq 0,~\int \Phi_0(v)\ud{v}= 1~{\rm and}~\|\<v\>^{2d}\<\nabla\>^2\Phi_0\|_{L^\infty}\|\V\|_{\L^\infty}<\frac{1}{2}.
			\end{equation}
			At $t=0$ we set the system such that 
			\begin{gather}\label{eq:def de F0L}
			F_{0,L}(x,y):= \frac{(2\pi\hbar)^d}{L^{2d}\mathcal{Z}_L}\sum_{k\in\mathbb{Z}_L}\Phi_0(\hbar k) e^{ik\cdot(x-y)},\\
			\hat{F}_{N,0} = F_{0,L}^{\otimes N.}
			\end{gather}
			where $\mathcal{Z}_L$ is a normalization constant, and we construct $F_N(t):\mathbb{R}^+\to\mathcal{L}^1(\mathfrak{H}^N_L)$ the solution of the Von Neumann equation \eqref{eq:Von Neuman} with initial data $F_{N,0}$.
			
			We fix the scaling $\mu,N,L\to\infty$ with $\mu = NL^{-d}$. Then, for any diverging sequence $\tau_N$ such that for some $\delta>0$, $\tfrac{t_N}{\hbar\mu} \ll (\log N)^\delta $ and $L\geq 2\hbar\log\tfrac{\tau_N}{L}$, and for any test function $\psi\in\mathcal{C}^\infty_c(\mathbb{R}_t^+\times \mathbb{R}_v^d)$, the scaled Wigner transform $\Phi_{N,L}$ verifies
			\begin{equation}
			\int_0^\infty \frac{(2\pi\hbar)^d}{L^d}\sum_{v\in\mathbb{Z}_{L/\hbar}} \psi\left(t,v\right)\partial_t\Phi_{N,L}(\tfrac{\tau_N}{\mu}t,v) \ud{t} \underset{\substack{N\to\infty\\L= N^\gamma}}{\longrightarrow}\int_0^\infty\int \psi(t,v)Q^\hbar_{LB}(\Phi_0)(v) \ud{v}\ud{t}.
			\end{equation}	
		\end{theorem}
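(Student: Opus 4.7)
The natural strategy is to work with the BBGKY hierarchy for the scaled marginals $F^{(j)}_{\hbar,N}$ and to perform a quantum cluster expansion, decomposing each $F^{(j)}_{\hbar,N}$ into products of a reduced one-particle density and truncated correlations $g_j$. Since $F_0$ is translation invariant, translation invariance is preserved at all times, so it is convenient to pass immediately to the Wigner representation and work in momentum variables; both $g_1$ and the Fourier modes of the $g_j$ depend only on velocity variables plus a finite set of momenta $k_1,\dots,k_{j-1}\in\mathbb{Z}^d_L$. The target equation then emerges as the equation for $\partial_t g_1$, sourced by the pair correlation $g_2$.

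The first step is to write out the BBGKY equation for $g_1$ and observe that, because of translation invariance, the Vlasov-type drift vanishes and the only source is a bilinear functional of $g_2$ that is (formally) the Boltzmann-like collision integral without screening. Next I would write the equation for $g_2$: schematically $\partial_t g_2 = \LL_2 g_2 + S(g_1,g_1) + \frac{1}{N}(\text{coupling to }g_3)$, where $\LL_2$ is the linearized two-body Hartree operator around $g_1(t)\approx\Phi_0$ and $S(g_1,g_1)$ is the initial-data source. The key computation is the Duhamel formula $g_2(t)=\int_0^t e^{(t-s)\LL_2}S(g_1(s),g_1(s))\ud s + \text{remainder}$; the resolvent of $\LL_2$ in Fourier variables is precisely what produces the screening factor $|\e_\hbar(\Phi_0,k,v)|^{-2}$. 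The Penrose-type smallness assumption $\|\<v\>^{2d}\<\nabla\>^2\Phi_0\|_{L^\infty}\|\V\|_{L^\infty}<1/2$ is exactly what is needed to invert $1+\V\,\cdot\,$ uniformly, ensuring that the resolvent is bounded and that the linearized flow is stable.

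Plugging this representation of $g_2$ back into the equation for $\partial_t g_1$ yields, after stationary-phase-type analysis on the oscillatory time integral (the $\delta_{k\cdot(v_2-v_1-\hbar k)}$ arises in the limit $t\to\infty$ of $\int_0^t e^{is\,k\cdot(v_2-v_1-\hbar k)/\hbar}\ud s$), exactly $Q^\hbar_{LB}(\Phi_0)$. The time rescaling $t\mapsto \tau_N t/N$ and the scaling $L=N^\gamma$ are designed so that (i) after rescaling, the memory integral has effectively run to $+\infty$, producing the Dirac mass, and (ii) the discrete Riemann sum over $\mathbb{Z}^d_L$ converges to the integral in $k$.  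The test against $\psi\in\mathcal{C}^\infty_c$ and the weak-in-time formulation are what allow us to justify passing the limit inside, because it lets us integrate by parts in $t$ and use an ergodic-averaging argument rather than pointwise-in-time control.

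The main obstacle, and the reason the validity time is capped at $(\log N)^{1-\delta}$ rather than $N$, is the uniform control of the cluster remainder terms $g_j$ for $j\ge 3$. Each further step in the hierarchy couples $g_j$ to $g_{j+1}$ with a factor $1/N$ but the linearized operators $\LL_j$ grow combinatorially and the stationary-phase improvements are only logarithmic per iteration, so the best one can absorb after $j$ layers of Duhamel iteration is of size $(C\tau_N/\log N)^{j}$. Choosing the truncation level $j$ optimally (roughly $j\sim \log N$) yields smallness provided $\tau_N\ll(\log N)^{1-\delta}$. A secondary technical point is proving that $g_1(t)$ itself stays close to $\Phi_0$ on the considered time scale, which can be done by a bootstrap: the drift of $g_1$ is $O(1)$ times $g_2$, $g_2$ is $O(1)$ in the relevant norm, and the scaled time $\tau_N/N$ is $o(1)$, so $g_1(\tau_N t/N)-\Phi_0=o(1)$. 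This feeds back to justify the linearization of $\LL_2$ around $\Phi_0$ rather than around the moving $g_1(t)$.
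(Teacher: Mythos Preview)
Your high-level architecture is right and matches the paper: cumulant (cluster) decomposition of the BBGKY hierarchy, closure at the pair level because translation invariance kills the Hartree drift, linearized two-body evolution whose resolvent produces the screening factor $|\e_\hbar|^{-2}$, and identification of the delta constraint $\delta_{k\cdot(v_2-v_1-\hbar k)}$ from the long-time oscillatory integral. The Penrose-type smallness hypothesis is indeed what makes the resolvent uniformly bounded (the paper isolates this as a separate lemma on $\tilde\e$).

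Where your sketch diverges from the paper, and where it becomes shaky, is the control of the higher cumulants $g_j$, $j\ge3$. You attribute the $(\log N)^{1-\delta}$ ceiling to a Duhamel iteration with ``stationary-phase improvements that are only logarithmic per iteration'' and an optimized truncation at level $j\sim\log N$. That is not the mechanism here, and I do not see how to make that heuristic rigorous in this setting. The paper does \emph{no} dispersive or stationary-phase analysis on the $g_j$; it uses the $\mathcal{L}^1$ cumulant bounds of Paul--Pulvirenti--Simonella, which come from a direct Gr\"onwall-type hierarchy lemma and yield $\|G_N^{[n]}(t)\|\lesssim (Ce^{Ct/\hbar})^n(n^2/N)^{(1-\beta)n/2}$. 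The exponential $e^{Ct/\hbar}$ competing against $N^{-\alpha}$ is what caps the physical time at $t\lesssim\hbar\log N$; there is no iteration depth to optimize. This matters because your proposed mechanism, if taken literally, would require phase cancellations in the $j$-body problem that nobody knows how to extract.

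Two further points you gloss over but which carry real work in the paper: (i) the large-box limit $L\to\infty$ is handled as a separate step, embedding the discrete and continuous problems in a common space of Radon measures and comparing via operator bounds on $B^L-B$; and (ii) the collision term is not obtained by a naive long-time limit of the Duhamel integral but via a Laplace transform in time followed by a careful resolvent decomposition $(K_2+B-\omega)^{-1}$ into one-body pieces $L_1,L_2$, contour deformation, and the Sokhotskii--Plemelj formula. Your ``stationary-phase-type analysis'' is a reasonable shorthand for the outcome, but the actual computation is a resolvent calculation, and the uniform bounds needed to pass to the limit (dominated convergence against $\tilde\varphi(\alpha)/(1+i\alpha)$) require the logarithmic estimate on $T(\omega,k,v_1)-T(\omega,-k,v_1')$ that you do not mention.
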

		\begin{remark}
			One can choose the scaling $\mu L^d=N$, with $\tau_n\leq (\log N)^{\frac{1}{2}-\beta}$ for $\beta>0$ and $L \simeq \log\log N$.
			
			An other interesting scaling in $\mu = N$, with  $\tau_n\leq (\log N)^{\frac{1}{2}-\beta}$ for $\beta>0$ and $L \simeq N^\alpha$ for any $\alpha>0$.
		\end{remark}
		The proof is given in Section \ref{sec:val au temps 0}.
		
		We follow mostly the strategy of \cite{Duerinckx} in the classical setting. It is based on the estimation of the "defect of factorization" (also called the \emph{cumulant}, introduce in \eqref{eq:def des cumulants}) of the marginal of the system. One can prove that for short time (of order $O((\log N)^{1-\delta})$), the correction implying three or more particles can be neglected. The remaining system of two equations gives the Lenard--Balescu evolution. 
		
		In \cite{Duerinckx}, the estimations of the cumulants are based on the Glauber calculus. In the present paper, we use an improvement of the cumulant estimates of \cite{PPS}.
		
		Note that in general, obtaining a justification of a collisional kinetic equation from a particle system is a difficult problem. The only full (non-linear, until a kinetic time) results hold in the low density setting, for the hard spheres system \cite{Lanford,King,IP,Spohn3,GST,PSS,DHM}, and for the interacting wave system \cite{DH,DH1}. The other results are either in a \emph{linear setting} (when we follow a particle in a bath at equilibrium \cite{vBHLS,DP,DR,BGS2,Ayi,Catapano1,MS}) or in the linearized setting (the study of the fluctuations of the empirical measure around the equilibrium \cite{Spohn,Spohn2,BGS,BGSS1,BGSS2,LeBihan}),  or are consistency result (result at time 0) \cite{BPS,Winter,VW,Duerinckx,DR}.

	\subsubsection{Property of quantum Lenard--Balescu equation}
	We study now the property of quantum Lenard--Balescu equation. We note that the Maxwellian  \[M(v):=\tfrac{1}{(2\pi)^{d/2}}\exp(-\tfrac{|v|^2}{2})\]
	is a steady state of the equation. We want to look at a solution close to this equilibrium:
	\begin{equation}
	F(t,v_1) = M(v_1)+\sqrt{M(v_1)} g(t,v_1).
	\end{equation}
	
	The equation solved by $g(t,v_1)$ is 
	\begin{align}\label{eq:QLB}
	\tag{QLB$_\hbar$}
	\partial_t g(t,v_1)& + \mathcal{L}_\hbar g(t,v_1)=\mathcal{Q}_\hbar(g(t),g(t))(v_1)
	\end{align}
	where we denote $F_h:=M+\sqrt{M}h$ and
	\begin{align}
	\mathcal{L}_\hbar g(v_1)&:=\frac{c_d}{\hbar^{2}}\int\DDelta \left(g\sqrt{M_*}\right) \frac{ \hat{\mathcal{V}}(k)^2\delta_{ k\cdot(v_2-v_1-\hbar k)}}{|\e_\hbar(M,k,v_1)|^2}\sqrt{M_2}\ud k \ud{v_2}\\
	\mathcal{Q}_\hbar(g,h)(v_1):=&-\frac{c_d}{\hbar^{2}}\int \DDelta(gh_*)\frac{ \hat{\mathcal{V}}(k)^2\delta_{ k\cdot(v_2-v_1-\hbar k)}}{|\e_\hbar(F_h,k,v_1)|^2} \sqrt{M_2}\ud{k}\ud{v_2}\\
	&+\frac{c_d}{\hbar^{2}}\int\DDelta\left(g\sqrt{M_*}\right)\left(\frac{ \hat{\mathcal{V}}(k)^2\delta_{ k\cdot(v_2-v_1-\hbar k)}}{|\e_\hbar(M,k,v_1)|^2}-\frac{ \hat{\mathcal{V}}(k)^2\delta_{ k\cdot(v_2-v_1-\hbar k)}}{|\e_\hbar(F_h,k,v_1)|^2}\right) \sqrt{M_2}\ud k\ud{v_2}\nonumber
	\end{align}
	where $\mathcal{L}_\hbar$ is the Linearized Boltzmann operator. We use the following notation
	\begin{equation}
	\begin{array}{c}v_1' :=v_1+\hbar k,~v_2':=v_2-\hbar k,~\hat{k}:=\frac{k}{|k|},\\
	\DDelta gh_*:=g(v_1)h(v_2)+g(v_2)h(v_1)-g(v'_1)h(v'_2)-g(v_2')h(v_1')
	\\
	\forall i\in\{1,2\},~j\in\{~,'\},~f_i^j:=f(v_i^j).
	\end{array}
	\end{equation}
	We will also use in the core of the proof the notations
	\begin{equation}
	v_1'' :=v_1-\hbar k,~v_2'':=v_2+\hbar k.
	\end{equation}
	
	It is well known that the operator $\mathcal{L}_\hbar$ is a self-adjoint operator in $L^2(\mathbb{R}^d)$. We introduce the weighted Sobolev space $\mathcal{H}^r\subset L^2(\mathbb{R}^d)$, with norm
	\begin{equation}
	\|g\|_r^2 := \sum_{r'=0}^r\left\|\left(\nabla-\frac{v}{2}\right)^{r'}g\right\|^2<\infty,
	\end{equation}
	where we denote $\|~\|$ the $L^2(\mathbb{R}^d)$. Note that the norm $\|g\|_r$ is equivalent to $\|(\nabla-\frac{v}{2})^rg\|$.
	
	In the following, we choose a potential $\mathcal{V}$ satisfying the following assumption:
	\begin{assumption}\label{ass: potentiel}
		We set the dimension $d\geq 2$. We define  $\mathcal{V}$ a symmetric ($\mathcal{V}(x)=\mathcal{V}(|x|)$) such that there exists two constants $C>0$ and $s> {d+3}$
		\begin{gather}
		|\hat{\mathcal{V}}(k)|^2+|\hat{\mathcal{V}}'(k)|^2+|\hat{\mathcal{V}}''(k)|^2\leq \frac{C}{(1+|k|)^{s}},\\
		\forall k,k' \text{ with } |k|\leq |k'|,~|\hat{\mathcal{V}}(k')|^2\leq C |\hat{\mathcal{V}}(k)|^2.
		\end{gather}
	\end{assumption}

	\paragraph{Cauchy theory of the Quantum Lenard-Balescu Equation} 
		
	The first step of the semi-classical limit is the construction of solutions with estimates independent of $\hbar$. One has a long time existence theorem for the \eqref{eq:QLB}-equation, but only for large dimension.	
	\begin{theorem}\label{thm:Cauchy theory}
		For any $d\geq 2$ and 
		$r\geq 5$, there exists $\hbar_0>0$ and $\eta>0$ such that for any initial data $g_0\in\mathcal{H}^r$ with $\|g_0\|_r\leq \eta$ and any $\hbar<\hbar_0$, there exists a unique $g_\hbar\in\mathcal{C}^0_b(\mathbb{R}^+,\mathcal{H}^r)$ solution of the \eqref{eq:QLB}-equations.
		
		In addition, one have the following bound:
		\begin{equation}
		\sup_{t\geq 0}\|g_\hbar(t)\|_r\leq \|g_0\|_r.
		\end{equation}
	\end{theorem}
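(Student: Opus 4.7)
The plan is the standard perturbative Cauchy theory near a Maxwellian equilibrium for Boltzmann-type equations, with the additional requirement that every constant be uniform in $\hbar \in (0,\hbar_0)$. The argument has three ingredients: uniform coercivity of $\mathcal{L}_\hbar$ in the weighted Sobolev space $\mathcal{H}^r$, a trilinear bound for the remainder $\mathcal{Q}_\hbar$, and a Gronwall/bootstrap argument closing the energy estimate on a small invariant ball.

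For the coercivity, one first exploits microreversibility: the constraint $\delta_{k\cdot(v_2-v_1-\hbar k)}$ is exactly the discrete energy-conservation condition for the jump $(v_1,v_2)\mapsto(v_1',v_2')$, so $\mathcal{L}_\hbar$ symmetrizes into a non-negative dissipation of the form $\hbar^{-2}\int |\DDelta(g\sqrt{M_*})|^2 \cdot (\text{bounded kernel})$. The apparently singular $\hbar^{-2}$ is absorbed because the discrete-integration-by-parts structure of $\DDelta$, combined with the $k\mapsto -k$ parity of the symmetric measure, produces an additional power of $\hbar$ and makes the form converge to the linearized-Landau dissipation $\langle \mathcal{L}_0 g, g\rangle$ as $\hbar \to 0$. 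The screening $|\e_\hbar(M,k,v)|^{-2}$ is bounded above and below uniformly in $\hbar$ by the usual Penrose/Landau-damping property of the Maxwellian. Commuting the weight $(\nabla - v/2)^r$ through the collision integral then yields the $\hbar$-uniform spectral gap $\langle \mathcal{L}_\hbar g, g\rangle_r \geq \lambda \|g\|_r^2$ modulo the (finite-dimensional) kernel of collisional invariants, which is preserved by the dynamics.

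The nonlinear remainder splits naturally as $\mathcal{Q}_\hbar = \mathcal{Q}_\hbar^{(1)} + \mathcal{Q}_\hbar^{(2)}$: the first replaces $g\sqrt{M_*}$ by $gh_*$ in $\DDelta$ (a genuine quadratic collision), and the second is the response of the screening, $|\e_\hbar(F_h)|^{-2} - |\e_\hbar(M)|^{-2}$, which is Lipschitz in $h$ provided the smallness $\|h\|_r \leq \eta$ keeps $\e_\hbar(F_h,k,v)$ uniformly bounded away from $0$. Using a Carleman-type representation of the $\delta$-constrained integrals together with the decay of $\hat{\mathcal{V}}$ from Assumption \ref{ass: potentiel}, both pieces admit $\hbar$-uniform trilinear bounds
$$|\langle \mathcal{Q}_\hbar(g,h), \phi\rangle_r| \lesssim \|g\|_r \|h\|_r \|\phi\|_r.$$

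Combining these in the energy identity yields $\frac{d}{dt}\|g\|_r^2 \leq -2\lambda \|g\|_r^2 + C\|g\|_r^3$, so for $\|g_0\|_r$ smaller than $\min(\eta,\lambda/C)$ a standard bootstrap gives the global bound $\|g(t)\|_r \leq \|g_0\|_r$ for all $t \geq 0$. Local existence and uniqueness come from a Picard iteration in the same functional framework, and the global a priori bound extends the local solution to all times. The hard part will be the uniform-in-$\hbar$ analysis: the $\hbar^{-2}$ prefactor is only controlled by systematic use of the $\DDelta$-cancellation combined with $k$-parity and the $\delta$-mass support, and commuting the weight $(\nabla - v/2)^r$ through the collision integrals requires careful bookkeeping that presumably dictates both the threshold $r \geq 5$ (from loss of derivatives in the commutators) and the restriction $d \geq 2$ (needed for the convergence of the $k$-integrals along the $\delta$-manifold uniformly in $\hbar$).
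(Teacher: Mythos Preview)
Your overall architecture (coercivity of the linearized operator, trilinear control of the remainder, bootstrap) is the right one, and you correctly identify that the whole difficulty is making every constant uniform in $\hbar$. But the energy inequality you write, $\tfrac{d}{dt}\|g\|_r^2 \leq -2\lambda\|g\|_r^2 + C\|g\|_r^3$, will not close, because neither of the two estimates feeding into it holds in that form.

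The linearized operator $\mathcal{L}_\hbar$ is \emph{not} coercive in $\mathcal{H}^r$: it is a Landau--type operator whose kernel degenerates at large velocity, so the best one can prove is $\int g\,\mathcal{L}_\hbar g \gtrsim \|\pi_0 g\|_\hbar^2$, where $\|\cdot\|_\hbar$ is a weaker, anisotropic ``discrete-difference'' norm (roughly $\|\langle v\rangle^{-1/2}g\|$ plus an $\hbar$-discretized derivative term). This is the norm that converges to the Landau dissipation $\|\cdot\|_0$ of Duerinckx--Winter as $\hbar\to 0$, and the paper proves the uniform lower bound by a compactness/contradiction argument rather than by a direct computation: one assumes a sequence $(g_n,\hbar_n)$ violating the gap, extracts a weak limit in the family $(\mathcal{H}_\hbar)_\hbar$, and shows the limit must lie in the kernel of $\mathcal{L}_0$, contradicting $\pi_0 g_n = 0$. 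Your sketch ``the form converges to $\langle\mathcal{L}_0 g,g\rangle$'' is morally what makes the compactness argument work, but it does not by itself give the lower bound uniformly.

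Correspondingly, the trilinear estimate you state, $|\langle\mathcal{Q}_\hbar(g,h),\phi\rangle_r|\lesssim\|g\|_r\|h\|_r\|\phi\|_r$, is not available with an $\hbar$-independent constant (in fact the naive bound in $\mathcal{H}^r$ blows up like $\hbar^{-2}$). What the paper proves instead is a bound of the shape
\[
|\langle\mathcal{Q}_\hbar(g,h),f\rangle_r| \;\lesssim\; \big(\|g\|_{\hbar,r}\|h\|_{r-1}+\|g\|_{r-1}\|h\|_{\hbar,r}\big)\,\|f\|_{\hbar,r},
\]
with two of the three factors in the dissipative norm $\|\cdot\|_{\hbar,r}$ and only one in the energy norm. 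This is exactly tailored so that, after commuting $(\nabla-\tfrac{v}{2})^r$ through the operator and summing with geometric weights in $r'$ to absorb the lower-order commutator terms, one arrives at
\[
\tfrac{d}{dt}\|g\|_r^2 \;+\; \big(c - C\|g\|_r\big)\,\|g\|_{\hbar,r}^2 \;\le\; 0,
\]
which is the inequality that actually closes for $\|g_0\|_r$ small. Your version would require control of $\|g\|_r^2$ by $\|g\|_{\hbar,r}^2$, which fails because of the weight degeneracy. The threshold $r\ge 5$ enters through the commutator bookkeeping and through the need to control the screening $\e_\hbar(F_h,\cdot)$ pointwise by Sobolev embedding.
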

	The proof is given in Section \ref{sec:cauchy theory}. As we want to perform a semi-classical limit, we want to obtain a Cauchy theory closed to the one obtain by Duerinckx and Winter \cite{DW} (following the strategy of \cite{Guo}). 
	
	In classical case, Duerinckx and Winter construct a weighted Sobolev norm $\|\cdot\|_0$,
	\[\|g\|_0^2 = \int \frac{|p^\perp_v \nabla g(v)|^2+|g(v)|^2}{1+|v|}+\frac{|(\text{Id}-p^\perp_v) \nabla g(v)|^2}{1+|v|^3}\ud v,~p^\perp_v := \text{Id}-\tfrac{v\otimes v}{|v|^2},\] controlling the non-linear operator. In order to use the dissipation coming from the linear operator $\mathcal{L}_0$, the norm $\|\cdot\|_0$ is bonded from below by $\mathcal{L}_0$: denoting $\pi_0$ the $L^2$-orthogonal projector onto the space $<\sqrt{M},v_1\sqrt{M},\cdots v_d\sqrt{M},|v|^2\sqrt{M}>$,
	\[\forall g\in\mathcal{C}^\infty_c(\mathbb{R}^d),~\int g\mathcal{L}_0 g\leq C \|\pi_0g\|_0^2 \]
	for some constant $C$.
	
	In the quantum case, we construct a family of norm $\|~\|_\hbar$, each controlling the non-linear operator $\mathcal{Q}_\hbar$. The $\|~\|_\hbar$ converge to $\|~\|_0$ (for $g$ smooth enough, $\|g\|_\hbar\to\|g\|_0$). We will prove that there exists a constant $C$ independent of $\hbar$ such that
	\[\forall g\in\mathcal{C}^\infty_c(\mathbb{R}^d),~\int g\mathcal{L}_\hbar g\leq C \|\pi_0g\|_\hbar^2. \]
	One can apply the same estimation than in the classical case.

	In addition, one can obtain the following short time result:
	\begin{theorem}
		For any dimension $d\geq 2$ and $\hbar_0>0$, $r\geq 5$, there exists to constant $c_0>0$ such that for any $\hbar<\hbar_0$
		
		Fix $g_0>0$ an initial data, and a constant $C_{g_0}$ with the following bounds
		\begin{equation}
		\e_\hbar(M+\sqrt{M}g_0,v,k)\geq \frac{1}{C_{g_0}},~~\|g_0\|_r\leq C_{g_0}.
		\end{equation}
		Then the \eqref{eq:QLB} equation admits a solution $g\in\mathcal{C}([0,\frac{c_0\hbar^2}{C_{g_0}^{2r+4}}],\mathcal{H}^r)$.
	\end{theorem}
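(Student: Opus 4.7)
The plan is to establish a short-time Cauchy theory by Picard iteration combined with a bootstrap on the $\mathcal{H}^r$-norm and on the lower bound for the dielectric function $|\e_\hbar(F_g,k,v)|$. Since $g_0$ is not assumed small, the dissipativity of $\mathcal{L}_\hbar$ cannot be exploited, and both $\mathcal{L}_\hbar g$ and $\mathcal{Q}_\hbar(g,g)$ must be controlled as pure forcing terms in $\mathcal{H}^r$. The $\hbar^{-2}$-prefactor of the collision operator is what forces the existence time to scale like $\hbar^2$.

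For a candidate solution $g$, set
\[T^* := \sup\Bigl\{T\geq 0 :\ \|g(t)\|_r \leq 2C_{g_0},\ \inf_{k,v}|\e_\hbar(F_{g(t)},k,v)| \geq \tfrac{1}{2C_{g_0}}\ \forall t\in[0,T]\Bigr\}.\]
A preliminary lemma --- proved via the Plemelj formula and Sobolev embedding, which requires $r\geq 5 > d/2+1$ --- gives continuity of $h\mapsto \e_\hbar(F_h,\cdot,\cdot)$ in $\mathcal{H}^r$, hence $T^*>0$. On $[0,T^*]$ one then establishes product bounds of the form
\[\|\mathcal{L}_\hbar g\|_r + \|\mathcal{Q}_\hbar(g,g)\|_r \lesssim \frac{1}{\hbar^2}\,C_{g_0}^{2r+2}\bigl(1+\|g\|_r^2\bigr),\]
which exploit the algebra property of $\mathcal{H}^r$, the Schwartz character of $M$ and $\sqrt{M}$, and the fact that each $v$-derivative of $|\e_\hbar(F_g,k,v)|^{-2}$ up to order $r$ produces by Fa\`a di Bruno factors $|\e_\hbar|^{-2}\lesssim C_{g_0}^2$ together with derivatives of $\e_\hbar$ bounded by $\|g\|_r \leq 2C_{g_0}$. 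These imply
\[\frac{d}{dt}\|g(t)\|_r^2 \leq \frac{C}{\hbar^2}\,C_{g_0}^{2r+4},\]
and so $\|g(t)\|_r \leq \tfrac{5}{3}C_{g_0}$ on $[0, c\hbar^2 C_{g_0}^{-(2r+4)}]$ for a small absolute constant $c$. On the same sub-interval, $\|g(t)-g_0\|_r = O(t/\hbar^2)$ and the continuity of $h\mapsto \e_\hbar$ used above gives $|\e_\hbar(F_{g(t)},k,v)|\geq \tfrac{3}{4C_{g_0}}$, so the bootstrap does not saturate.

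Existence on $[0, c_0\hbar^2/C_{g_0}^{2r+4}]$ is then obtained by a Picard iteration $\partial_t g^{(n+1)} + \mathcal{L}_\hbar g^{(n)} = \mathcal{Q}_\hbar(g^{(n)},g^{(n)})$ started from the constant iterate $g^{(0)}(t)\equiv g_0$: the same energy estimates give uniform control of $\|g^{(n)}\|_r$ on the interval, and a contraction estimate for the differences $g^{(n+1)}-g^{(n)}$ in the weaker norm $\mathcal{H}^{r-1}$ (the nonlinearity being bilinear) yields convergence. Uniqueness follows from an analogous energy estimate on the difference of two solutions in $\mathcal{H}^{r-1}$. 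The main obstacle is the derivation of the product bounds above, specifically the uniform $\mathcal{H}^r$-control of $|\e_\hbar(F_g,k,v)|^{-2}$ as a function of $v$: the principal-value singularity on the plasmon resonance $\hbar k\cdot(v_*-v-\hbar k)=0$, handled by the Plemelj formula, produces a delta-function contribution on a hyperplane whose $v$-derivatives must be estimated uniformly in $k$, and each additional $v$-derivative brings in a factor controlled by $\|g\|_r$. This is the step that fixes the precise exponent $2r+4$ of $C_{g_0}$ in the final existence time; any improvement would require exploiting structural cancellations between $\mathcal{L}_\hbar$ and $\mathcal{Q}_\hbar$ of the kind used in the near-equilibrium setting of the previous theorem.
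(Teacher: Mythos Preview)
Your approach is essentially the same as the paper's: both forgo dissipativity, use the crude bound $\|\cdot\|_\hbar\lesssim\hbar^{-1}\|\cdot\|$ to obtain $\hbar^{-2}$-sized energy estimates on $\mathcal{L}_\hbar$ and $\mathcal{Q}_\hbar$ in $\mathcal{H}^r$, bootstrap the lower bound on $|\e_\hbar|$ via its Lipschitz dependence on $g$, and close by a Picard iteration on a time interval of length $\sim\hbar^2/C_{g_0}^{2r+4}$. The only minor variations are that the paper keeps $\mathcal{L}_\hbar g$ implicit in the iteration (solving $\partial_t g+\mathcal{L}_\hbar g=\mathcal{Q}_\hbar(g,h)$ with $h$ the previous iterate, rather than your fully explicit scheme) and proves the contraction directly in $\mathcal{H}^r$ rather than dropping to $\mathcal{H}^{r-1}$.
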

	The proof is given in Section \ref{sec:cas d=2 ou 3}
	
	\paragraph{Semi-classical limit}
	One of the goals of the present paper is to describe the limit as $\hbar$ goes to zero. We want to link the quantum Lenard-Balescu with its classical counterpart

	For perturbative solution $F(t):= M+\sqrt{M} g(t)$, the classical equation \eqref{eq:Lenard--Balescu classique} becomes
	\begin{align}\label{eq:CLB}
	\tag{CLB}
	\partial_t g(t,v_1)& + \mathcal{L}_0 g(t,v_1)=\mathcal{Q}_0(g(t),g(t))(v_1)
	\end{align}
	where we denote $H:=M+\sqrt{M} h$ and
	\begin{align}
	\mathcal{L}_0 g(v_1)&:=\left(\nabla-\frac{v_1}{2}\right)\cdot\int B(M,v_1,v_2) \left[\sqrt{M_2}\left(\nabla_1+\frac{v_1}{2}\right)g_1-\sqrt{M_1}\left(\nabla_2+\frac{v_2}{2}\right)g_2\right]\sqrt{M_2} \ud{v_2}\\
	\mathcal{Q}_0(g,h)&(v_1):=-\left(\nabla-\frac{v_1}{2}\right)\cdot\int B(H,v_1,v_2) \left[h_2\nabla g_1+g_2\nabla h_1-h_1\nabla g_2-g_1\nabla h_2\right]\sqrt{M_2} \ud{v_2}\\
	+\left(\nabla-\frac{v_1}{2}\right)\cdot\int &\left(B(H,v_1,v_2)-B(M,v_1,v_2)\right) \left[\sqrt{M_2}\left(\nabla_1+\frac{v_1}{2}\right)g_1-\sqrt{M_1}\left(\nabla_2+\frac{v_2}{2}\right)g_2\right]\sqrt{M_2} \ud{v_2}\nonumber
	\end{align}
	
	\begin{theorem}\label{thm:semi-classical limit}
		For any $d\geq 2$ and 
		$r\geq 5$, there exist $\hbar_0$ and $\eta>0$ such that for any initial data $g_0\in\mathcal{H}^r$ with $\|g_0\|_r\leq \eta$ and any $\hbar<\hbar_0$,  the sequence of  $(g_\hbar)_{\hbar<\hbar_0}$ of solution of the \eqref{eq:QLB}-equations converges weakly in $\mathcal{C}_b^0(\mathbb{R}^+,\mathcal{H}^r)$ as $\hbar\to0$ to $g_\infty$, solution of the \eqref{eq:CLB}-equation with initial data $g_\infty(t=0)=g_0$.
		
		In addition, there exists some positive constant $C>0$ independent of $g_0$ and $\hbar$ such that:
		\begin{equation}
		\forall t\geq 0,~\|g_\hbar(t)-g_\infty(t)\|\leq C 
		\hbar t \|g_0\|_r 
		\end{equation}
	\end{theorem}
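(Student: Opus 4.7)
The plan is the standard template for quantitative semiclassical limits of kinetic equations: (i) use Theorem \ref{thm:Cauchy theory} to get uniform bounds on $g_\hbar$ in $\mathcal{H}^r$, (ii) expand the quantum operators to leading order in $\hbar$ and identify $\mathcal{L}_0,\mathcal{Q}_0$, (iii) close with an $L^2$ energy estimate on the difference $d_\hbar:=g_\hbar-g_\infty$.

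\textbf{Step 1 (uniform bounds).} Theorem \ref{thm:Cauchy theory} provides, for $\|g_0\|_r\leq\eta$ and $\hbar<\hbar_0$, a global solution $g_\hbar\in\mathcal{C}^0_b(\R^+,\mathcal{H}^r)$ with $\sup_t\|g_\hbar(t)\|_r\leq\|g_0\|_r$. The Duerinckx--Winter theory \cite{DW} provides the analogous $g_\infty$ for \eqref{eq:CLB} (possibly after shrinking $\eta$). Hence, along a subsequence, $g_\hbar\rightharpoonup g_\infty'$ weakly-$*$ in $L^\infty(\R^+;\mathcal{H}^r)$, and it remains to show $g_\infty'=g_\infty$ with the stated rate.

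\textbf{Step 2 (semi-classical expansion).} On the support of $\delta_{k\cdot(v_2-v_1-\hbar k)}$ one has exact energy conservation $|v_1|^2+|v_2|^2=|v_1'|^2+|v_2'|^2$, and in particular $M_1M_2=M_1'M_2'$. A second-order Taylor expansion in $\hbar$ gives
\begin{equation*}
\DDelta(g\sqrt{M}_*) = -\hbar\,k\cdot X_0(g,v_1,v_2)+\hbar^2 R_\hbar(g,v_1,v_2,k),
\end{equation*}
where $X_0$, once combined with the remaining $k$-factor built into the $\delta$-constraint and the $\hbar^{-2}$ prefactor, reconstructs exactly the classical bracket $\sqrt{M_2}(\nabla_1+\tfrac{v_1}{2})g_1-\sqrt{M_1}(\nabla_2+\tfrac{v_2}{2})g_2$, and the remainder satisfies a bound $\lesssim |k|^2\|g\|_r$ pointwise in $k$. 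A parallel expansion of the screening yields $\e_\hbar(\Phi,k,v)=\e_0(\Phi,k,v)+O(\hbar)\|\Phi\|_r$; the smallness of $g_\hbar$ ensures that $|\e_\hbar|$ stays bounded away from zero uniformly in $\hbar$, so that $|\e_\hbar|^{-2}$ can be expanded as well. Integrating against $\hat{\mathcal{V}}(k)^2$ (whose decay in $k$ is guaranteed by Assumption \ref{ass: potentiel}) and using $r\geq 5$ to absorb the $|k|^2$ growth of the remainder, one obtains for $\|g\|_r,\|h\|_r$ small
\begin{equation*}
\|\mathcal{L}_\hbar g-\mathcal{L}_0 g\|+\|\mathcal{Q}_\hbar(g,h)-\mathcal{Q}_0(g,h)\|\lesssim \hbar\,\|g\|_r(1+\|h\|_r).
\end{equation*}

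\textbf{Step 3 (energy estimate and conclusion).} The difference $d_\hbar$ solves
\begin{equation*}
\partial_t d_\hbar+\mathcal{L}_\hbar d_\hbar = \bigl[\mathcal{Q}_\hbar(g_\hbar,g_\hbar)-\mathcal{Q}_\hbar(g_\infty,g_\infty)\bigr]+\mathsf{R}_\hbar,
\end{equation*}
with $\mathsf{R}_\hbar:=(\mathcal{L}_0-\mathcal{L}_\hbar)g_\infty+(\mathcal{Q}_\hbar-\mathcal{Q}_0)(g_\infty,g_\infty)$ of size $O(\hbar\|g_0\|_r)$ in $L^2$ by Step 2. Testing against $d_\hbar$, the coercivity of $\mathcal{L}_\hbar$ inherited from the proof of Theorem \ref{thm:Cauchy theory} controls the dissipative side, while smallness of $g_\hbar,g_\infty$ in $\mathcal{H}^r$ absorbs the bilinear difference $\mathcal{Q}_\hbar(g_\hbar,g_\hbar)-\mathcal{Q}_\hbar(g_\infty,g_\infty)$ into that dissipation. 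One concludes $\frac{1}{2}\frac{d}{dt}\|d_\hbar\|^2\leq C\hbar\|g_0\|_r\|d_\hbar\|$, and integration gives $\|d_\hbar(t)\|\lesssim\hbar t\|g_0\|_r$. Combined with the uniform $\mathcal{H}^r$ bound this promotes the subsequential weak-$*$ limit to the stated weak convergence in $\mathcal{C}^0_b(\R^+,\mathcal{H}^r)$ and identifies the limit as $g_\infty$, removing the subsequence extraction.

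\textbf{Main obstacle.} The delicate point is Step 2 for the screening: the integrand defining $\e_\hbar$ contains the principal value of $(\hbar k\cdot(v_*-v-\hbar k)-i0)^{-1}$, whose denominator degenerates as $\hbar\to 0$. Proving the pointwise estimate $\|\e_\hbar-\e_0\|_{L^\infty_{v,k}}\lesssim\hbar\|\Phi-M\|_r$ together with the lower bound $\inf_{k,v}|\e_\hbar|\geq c>0$ uniform in $\hbar<\hbar_0$ requires a careful comparison of the finite difference $\hbar^{-1}(\Phi(v_*)-\Phi(v_*-\hbar k))$ with $k\cdot\nabla\Phi(v_*)$ under the principal value, after which the resulting bound must survive integration against $\hat{\mathcal{V}}(k)^2$; this is where the extra regularity $r\geq 5$ and the decay assumptions on $\hat{\mathcal{V}}$ are essential.
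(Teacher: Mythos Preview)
Your proposal is correct and follows essentially the same three-step template as the paper: uniform-in-$\hbar$ bounds from Theorem~\ref{thm:Cauchy theory}, an $O(\hbar)$ comparison of the quantum and classical operators, and an $L^2$ energy estimate on $d_\hbar=g_\hbar-g_\infty$ that closes via the coercivity of $\mathcal{L}_\hbar$ and smallness of the data.

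The one point where the paper proceeds differently from your Step~2 is the \emph{form} of the operator comparison. You assert a strong bound $\|\mathcal{L}_\hbar g-\mathcal{L}_0 g\|+\|\mathcal{Q}_\hbar(g,h)-\mathcal{Q}_0(g,h)\|\lesssim\hbar\|g\|_r(1+\|h\|_r)$ in $L^2$; the paper instead proves (Proposition~\ref{prop:convergence operateur}) the weak/duality estimate
\[
\Bigl|\int f\,(\mathcal{L}_\hbar-\mathcal{L}_0)g\Bigr|\lesssim\hbar\,\|f\|_\hbar\,\|g\|_5,\qquad
\Bigl|\int f\,(\mathcal{Q}_\hbar-\mathcal{Q}_0)(g,h)\Bigr|\lesssim\hbar\,\|f\|_\hbar\,\|g\|_5\|h\|_5(1+\|h\|_5)^4.
\]
This weak form arises naturally from the collisional symmetry: after testing against $f$ and symmetrizing, one factor $(f_1\sqrt{M_2}-f_1'\sqrt{M_2'})/\hbar$ is controlled by $\|f\|_\hbar$, so only a \emph{first}-order Taylor expansion of the $g,h$ factor is needed, and the shift in the constraint $\delta_{k\cdot(v_2-v_1-\hbar k)}$ is handled by a change of variables rather than a separate expansion. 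Your strong bound would require pushing the Taylor expansion one order further and tracking the $\delta$-shift explicitly; this is plausible with $r\ge5$ but is more work than your sketch suggests. Since in Step~3 you only ever use $\int d_\hbar\,\mathsf{R}_\hbar$, the weak form with $f=d_\hbar$ is exactly what is needed, and the paper's route is the cleaner one. Your identification of the screening estimate $|\e_\hbar-\e_0|\lesssim\hbar$ as the delicate ingredient is accurate; it is Proposition~\ref{prop:borne sur epsilone mieux}(iii) in the paper.
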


	\begin{remark}
		The preceding estimates can be improve to get a similar bound on $\|g_\hbar(t)-g_\infty(t)\|_r$. As the computations become technical, we prefer to skip them.
	\end{remark}
	The proof is given in Section \ref{sec:semi classical limit}.
	
	\medskip\noindent{\bf Notation}
	\emph{As the constants in the inequalities change along the computation, we denote $A\lesssim B$ if there exists a constant $C>0$ independent of $\hbar$ such that $A\leq CB$. If $A\lesssim B$ and $B\lesssim A$, we denote $A\simeq B$. We denote $A=O(B)$ if $A\lesssim B$.}
	
	\section{Validity at time $0$ of the Quantum Lenard-Balescu equation}\label{sec:val au temps 0}
	
	In the first part of the present paper, we present a system from which the quantum Lennard--Balescu equation can be derived (at least at time $0$).

	The strategy is quiet disconnected from the rest of the paper, and is mainly inspired from \cite{PPS} for the estimation of the cumulant, and from \cite{Duerinckx} for the computation of the collision operator.
	
	We denote $\mathcal{L}^1(\mathfrak{H})$ the space of trace-class operator on the Hilbert space $\mathfrak{H}$, and $\mathfrak{H}_L:= L^2(\mathbb{T}_L)$.
	
	We recall that the system is describe by a density matrix $F_N(t)\in\mathcal{L}^1(\mathfrak{H}_L^N)$, self-adjoint, symmetric: for all $\sigma\in\mathfrak{S}_N$ and $(\psi_i)_{i\leq N}\in\mathfrak{H}^N_L$,
	\[\left\< \psi_1\otimes\cdots\psi_N,F_N(t)\psi_1\otimes\cdots\psi_N\right\>= \left\< \psi_{\sigma(1)}\otimes\cdots\psi_{\sigma(N)},F_N(t)\psi_{\sigma(1)}\otimes\cdots\psi_{\sigma({N})}\right\>.\]
	It is solution of the Von-Neuman--Fock equation
	\[\frac{i\mu\hbar}{N}\partial_t F_N(t) = \frac{\mu\hbar^2}{N}\sum_{j=1}^{N} \left[-\Delta_j,F_N(t)\right]+\frac{1}{N}\sum_{j<k}^N \left[V_{j,k},F_N(t)\right].\]
	Here, $\Delta_j$ is the Laplacian with respect to the $j$-th variable, and $V_{j,k}$ the multiplication by $\mathcal{V}(x_j-x_k)$.
	
	At time $0$, we fix
	\begin{gather*}
	F_{0,L}(x,y):= \left(\frac{2\pi\hbar}{L^2}\right)^d\frac{1}{\mathcal{Z}_L}\sum_{k\in\mathbb{Z}_L}\Phi_0(\hbar k) e^{ik\cdot(x-y)},\\
	\hat{F}^N(t= 0) = F_{0,L}^{\otimes N.}
	\end{gather*}
	
	\subsection{{\it A priori} estimation of the cumulant}
	
	In the following we introduce the notations: for $f\in\LL^1(\mathfrak{H}_L)$ and $\sigma\subset[1,N]$
	\[f^\sigma:=\bigotimes_{k\in\sigma}f_k\in\LL^1(\mathfrak{H}^\sigma_L),\]
	and $\tr_\sigma$ the trace with respect the particles in $\sigma\subset[n]$:
	\[\forall A\in\LL^1(\mathfrak{H}^N),B\in\LL(\mathfrak{H}^\sigma),~\tr\left(A\left(B\otimes\ind^{[1,n]\setminus\sigma}\right)\right)=\tr\left(\tr_\sigma A \,B\right).\]
	
	We use the notation $[m,n] = \{m,m+1,\cdots,n\}$ and $[n]:=[1,n]$.
	
	That defines the $k$-th marginal of the density matrix $F_N\in\LL^1(\mathfrak{H}^N)$:
	\[F_N^{[k]}:=\tr_{[1,k]}F_N.\]
	
	If the system start from a factorized initial data $F_{N,0}= F_0^{\otimes N}$, one can expect that the system remains almost factorized. As the interaction creates some correlation between particles it will never remains fully factorized and we introduce the family of density matrix $(G_{N,n})_n\in\prod_{n\geq 1} \LL^1(\mathfrak{H}_L^n)$ to measure the defect of tensorisation:
	\begin{equation}\label{eq:def des cumulants}
	G_{N}^{[n]}(t) = \sum_{\sigma\subset[n]} (-1)^{\sigma} G_{N}^{[n]\setminus \sigma}(t)\otimes (F_N^{(1)}(t))^{\sigma}.
	\end{equation}
	Using the $G^N_\omega$, one can decompose the marginal as
	\begin{equation}
	F_N^{[n]}(t) = \sum_{\sigma\subset[1,n]}  G_{N}^{[1,n]\setminus \sigma}(t)\otimes (F_{N}^{(1)}(t))^{\sigma}.
	\end{equation}
	
	We will prove the following result, which is similar to Theorem 2.2 of \cite{PPS}.
	\begin{prop}\label{prop:estimation a priori}
		Let $(F_0^N)\in\prod_N\LL^1(\mathfrak{H}_L^N)$ a family of initial data such that for all $N\geq 1$, 
		\begin{equation}
		\forall n\in[1,N], \|G_{N}^{[n]}(t=0)\|_{\LL_s^1(\mathfrak{H}_L^n)}\leq A^n\left(\frac{n}{\sqrt{N}}\right)^{n/2}
		\end{equation}
		
		Then there exists a constant $C>0$ independant of $C^0$ such that for any $\beta\in(0,1/9)$, 
		\[\forall t>0 \text{ such that }\frac{Nt}{\mu\hbar}\leq\frac{\beta\log N}{C\|\mathcal{V}\|_\infty},~ \forall n\geq 1,~\|G_{N}^{[n]}(t)| \leq(CA)^n\left(\frac{n^{1-\beta}}{N^{\frac{1-3\beta}{2}}}\right)^n.\]
	\end{prop}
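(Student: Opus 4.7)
The proof follows the Duhamel iteration strategy for the cumulant hierarchy used in \cite{PPS}, with sharper bookkeeping in the spirit of \cite{Duerinckx} that produces the trade-off parameter $\beta$.

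\emph{Step 1: cumulant hierarchy.} The marginals $F_N^{[n]}$ satisfy the standard BBGKY hierarchy
\begin{equation*}
i\hbar \partial_t F_N^{[n]} = \tfrac{\hbar^2}{2}\sum_{j=1}^n[-\Delta_j, F_N^{[n]}] + \tfrac{1}{N}\sum_{j<k\leq n}[V_{j,k}, F_N^{[n]}] + \tfrac{N-n}{N}\sum_{j=1}^n \tr_{n+1}[V_{j,n+1}, F_N^{[n+1]}].
\end{equation*}
Inverting the cumulant decomposition and substituting, one obtains an evolution equation for $G_N^{[n]}$ of the form
\begin{equation*}
i\hbar \partial_t G_N^{[n]} = [H_0^{[n]}, G_N^{[n]}] + \mathcal{I}^{(-1)} G_N^{[n-1]} + \mathcal{I}^{(0)} G_N^{[n]} + \mathcal{I}^{(1)} G_N^{[n+1]} + \mathcal{I}^{(2)} G_N^{[n+2]},
\end{equation*}
where $H_0^{[n]} = -\tfrac{\hbar^2}{2}\sum_{j=1}^n \Delta_j$ and the linear operators $\mathcal{I}^{(j)}$ come from regrouping the pair interactions together with the traced-out contributions according to which particles they couple. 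A direct bookkeeping gives the $\LL^1 \to \LL^1$ bounds $\|\mathcal{I}^{(\pm 1)}\| \lesssim n\|\mathcal{V}\|_\infty$ and $\|\mathcal{I}^{(0)}\|,\|\mathcal{I}^{(2)}\| \lesssim n^2 \|\mathcal{V}\|_\infty / N$.

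\emph{Step 2: iterated Duhamel.} Let $U_0^{[n]}(t)$ denote the free propagator associated with $H_0^{[n]}$; it is an isometry on $\LL^1(\mathfrak{H}_L^n)$. Iterating Duhamel's formula $K$ times expresses $G_N^{[n]}(t)$ as a sum over paths $\underline{n} = (n_0, n_1, \ldots, n_K)$ with $n_0 = n$ and $n_{\ell+1} - n_\ell \in \{-1, 0, 1, 2\}$, each contribution being a time-ordered integral of a product of $\mathcal{I}^{(j_\ell)}$ factors applied to the initial leaf $G_N^{[n_K]}(0)$, plus a remainder at depth $K$.

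\emph{Step 3: bounds and summation.} The $\LL^1$ norm of each path contribution is bounded by
\begin{equation*}
\frac{1}{K!}\Bigl(\frac{t}{\hbar}\Bigr)^{K} \prod_{\ell=0}^{K-1} \bigl\|\mathcal{I}^{(j_\ell)}\bigr\|_{n_\ell \to n_{\ell+1}} \cdot \bigl\|G_N^{[n_K]}(0)\bigr\|_{\LL^1_s}.
\end{equation*}
Inserting the operator bounds from Step 1 and the initial estimate $\|G_N^{[n_K]}(0)\|_{\LL^1_s} \leq A^{n_K}(n_K/\sqrt{N})^{n_K/2}$, summing over the at most $4^K$ such paths, and optimizing the truncation depth $K = K(n)$ yields the announced bound $(CA)^n(n^{1-\beta}/N^{(1-3\beta)/2})^n$. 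The remainder at depth $K$ is absorbed in the same estimate provided $K$ is chosen of order $n\log N$, which is permitted thanks to the smallness hypothesis $t\|\mathcal{V}\|_\infty/\hbar \leq \beta/C$.

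\emph{Main obstacle.} The delicate point is the upward coupling via $\mathcal{I}^{(2)}$: each Duhamel iteration may raise the cumulant order by $2$ with a quadratic combinatorial cost $n^2$. The gain $1/\sqrt{N}$ per added particle coming from the initial bound has to compensate both this cost and the diverging number of paths, which forces the kinetic window to stay of order $\hbar$. The explicit exponents $1-\beta$ and $(1-3\beta)/2$, as well as the restriction $\beta < 1/9$, arise from equating the rates of the two competing geometric series — one in the iteration depth $K$, one in the leaf order $n_K$ — at their joint radius of convergence.
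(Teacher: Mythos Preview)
Your hierarchy in Step~1 has the wrong shape. The cumulant equation for $G_N^{[n]}$ couples to $G_N^{[n-2]}$, $G_N^{[n-1]}$, $G_N^{[n]}$ and $G_N^{[n+1]}$: the jump of size~$2$ is \emph{downward}, not upward. BBGKY adds at most one traced particle, and the cumulant inversion never manufactures a higher-order cumulant, so there is no $\mathcal{I}^{(2)}G_N^{[n+2]}$ term. Your ``main obstacle'' is therefore misidentified, and the operator bounds are attached to the wrong arrows: in the actual hierarchy the upward link $D_n^1$ costs $O(n)$, while the downward links $\bar D_n^{-1}$ and $D_n^{-2}$ are the ones carrying the small factor $n^2/N$.

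A second, more serious gap: you treat the hierarchy as linear in the $G_N^{[n]}$, but it is not. The downward operator $\bar D_n^{-1}$ depends on $F_N^{(1)}(t)$ and on $G_N^{[2]}(t)$ themselves. The paper closes this by first invoking the Hartree stability estimate of \cite{PPS} to obtain $\|G_N^{[2]}(t)\|\le CN^{-1}e^{C\|\mathcal V\|_\infty t/\hbar}\le CN^{-(1-\beta)}$ on the allowed time window. Only then does one get $\|\bar D_n^{-1}\|\lesssim (n^2/N)+(n^2/N)^{1-\beta}$, and it is precisely this exponent $1-\beta$ --- inherited from the a priori control of the second cumulant, not from any path-counting optimisation --- that propagates through the hierarchy to give $(n^2/N)^{(1-\beta)n/2}$. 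The additional loss $(e^{Ct})^n\le N^{\beta n}$ then turns $(1-\beta)/2$ into $(1-3\beta)/2$. Your Step~3 account of the exponents as coming from ``equating rates of two geometric series'' does not correspond to this mechanism.

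For comparison, the paper does not run the Duhamel iteration by hand: once the hierarchy and the preliminary bound on $G_N^{[2]}$ are in place, it invokes a black-box lemma from \cite{PPS} for abstract families satisfying
\[
\partial_t\|h_n\|\le n\big(\|h_n\|+\|h_{n+1}\|\big)+\Big(\tfrac{n^2}{N}+\big(\tfrac{n^2}{N}\big)^{\alpha}\Big)\big(\|h_{n-1}\|+\|h_{n-2}\|\big),
\]
applied with $\alpha=1-\beta$.
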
 
	\begin{proof}
		We define $\tilde{F}(t)$ the solution of the Hartree equation
		\[\left\{\begin{split}
		\frac{iN\hbar}{\mu}\partial_t \tilde{F}(t) &= \frac{N\hbar^2}{\mu}\left[-\Delta,\tilde{F}(t)\right]+\tr_1\left[V_{1,2},\tilde{F}\otimes\tilde{F}(t)\right]\\
		\tilde{F}(t=0)&= F_N^{(1)}(t=0)
		\end{split}\right.\]
		
		We introduce the family of matrices $E_{[1,n]}^N(t)$ defined by 
		\begin{equation}
		E_N^{[n]}(t) = \sum_{\sigma\subset[n]} (-1)^{\sigma} F_N^{[n]\setminus \sigma}(t)\otimes (\tilde{F}(t))^{\sigma}.
		\end{equation}
		
		It has been proved in \cite{PPS} that for $t>0$, there exists a constant $C>0$ (independent of $N$ and $L$) such that for all $t>0$,
		\begin{equation}\label{eq:estimation des deux premiers cumulants}
		\|F_N^{[1]}(t)-\tilde{F}(t)\|\leq \frac{C}{N}e^{C\frac{tN}{\mu \hbar}},~\|E^{\{1,2\}}_N(t)\|\leq \frac{C}{N}e^{C\frac{tN}{\mu \hbar}}.
		\end{equation}
		
		We deduce immediately that for $\beta\in(0,1)$, $\forall t$ such that $	\frac{tN}{\mu\hbar}\leq \frac{\beta\log N}{C\|\mathcal{V}\|_\infty}$
		\begin{equation*}
		\|G_N^{[2]}(t)\|\leq \|\tilde{F}\otimes\tilde{F}-F_N^{\{1\}} F_N^{\{2\}}\|+\|E_N^{[2]}\| \leq \frac{C}{N}e^{C\frac{\|\mathcal{V}\|_\infty Nt}{\mu\hbar}}\leq \frac{C}{N^{1-\beta}}.
		\end{equation*}
		
		We can now look at the $G_N^{[n]}$. They are solution of the following hierarchy (its derivation is similar to the Appendix of \cite{PPS})
		
		\begin{multline}
			\frac{i\hbar}{L^d}\partial_t G_N^{[n]} = \Bigg(\frac{\hbar^2}{L^d} K_n+\frac{1}{N}\sum_{j<k} T_{j,k}\Bigg)G_N^{[n]}+D_n({f_1})G_N^{[n]}+D^1_n({f_1})G_N^{[n+1]}\\[-5pt]
		+\bar{D}^{-1}_n (F_N^{(1)},G_N^{[2]})G_N^{[n-1]}+D^{-2}_n(F_N^{[1]})G_N^{[n-2]}
		\end{multline}
		where
		\[\begin{aligne}{c}
		K_n := \sum_{j=1}^n [-\tfrac{\Delta_j}{2},\cdot],~T_{j,k}:= [V_{j,k},\cdot],~C_{j,n+1}h_{[n+1]} = \tr_{[1,n]} \Big(T_{j,n+1} h_{[n+1]}\Big)\\[10pt]
		\bar{D}_n^{-1}(f_1,g_2)h_{[n-1]} := \sum_{j\in[n]}\Big(C_{1,2}g_{[2]}\Big)^{\{j\}}g_{[n]\setminus\{j\}}+D^{-1}_{n}(f_1)h_{[n-1]}\\[15pt]
		D^1_{N}=D_{0}^{-2}:=0\\[5pt]
		D^{-1}_1(f_1)h_{\emptyset}:=-\frac{1}{N}C_{1,2}f_1\otimes f_1\\[5pt]
		D^{-2}_2(f_1)h_{\emptyset}:=\frac{1}{N}\Big(T_{1,2}f_1\otimes f_1-\big(C_{1,2}(f_1\otimes f_1)\big)\otimes f_1-f_1\otimes\big(C_{1,2}(f_1\otimes f_1)\big)\Big)
		\end{aligne}\]
		with the similar bound, for $\frac{Nt}{\mu\hbar }\leq \frac{\beta\log N}{C\|\mathcal{V}\|_\infty}$, $\beta\in(0,1/2)$,
		\begin{multline*}
		\|\bar{D}^{-1}_N\|\leq \|\mathcal{V}\|_\infty\Big(\frac{4n^2}{N}+n\|g_{[2]}\|\Big)\leq \|\mathcal{V}\|_\infty\Big(\frac{4n^2}{N}+\frac{n}{N}Ce^{C\|\mathcal{V}\|_\infty\frac{tL^d}{\hbar}}\Big)\leq C'\|\mathcal{V}\|_\infty\Big(\frac{n^2}{N}+\frac{n}{N^{1-\beta}}\Big)\\
		\leq C'\|\mathcal{V}\|_\infty\Big(\frac{n^2}{N}+\Big(\frac{n^2}{N}\Big)^{1-\beta}\Big).
		\end{multline*}
		
		\begin{remark}
			Note that this hierarchy of equation can be simplified considering the \emph{Grand canonical ensemble}: the number of particles is not fixed but is a random (see \cite{PS3}).
		\end{remark} 
		
		In \cite{PPS}, the authors use the following technical lemma, which is proved in Section 3 of \cite{PPS}
		\begin{lemma}
			Let $(g_n(t))_{n\leq N}$, $g_n:\mathbb{R}^+\to \mathcal{E}_n$ be a sequence of function onto a family of Banach spaces $\mathcal{E}_n$, and $\alpha\in(0,1)$ such that
			\[\forall t\geq 0,~\|g_n(t)\|\leq 2^n,~\forall n\geq 1,~\|g_n(0)| \leq A^n\Big(\frac{n^2}{N}\Big)^{\frac{\alpha n}{2}}\]
			\[\partial_t \|h_n\|\leq n \big(\|h_n\|+\|h_{n+1}\|\big)+\Big(\frac{n^2}{N}+\Big(\frac{n^2}{N}\Big)^\alpha\Big)\big(\|h_{n-1}\|+\|h_{n-2}\|\big)\]
			
			Then there exists a constant $C$ independent of $A$ such that
			\[\forall n\geq 1,~\|g_n(t)| \leq \Big(CAe^{Ct}\Big)^n\Big(\frac{n^2}{N}\Big)^{\frac{\alpha n}{2}}.\]	
		\end{lemma}
		This lemma was used in \cite{PPS}to prove estimation of \eqref{eq:estimation des deux premiers cumulants}. Applying the lemma on the family $G^{[n]}_N(t)$, one obtain
		\[\forall n\geq 1,~\|G_N^{[n]}(t)| \leq \Big(CAe^{\frac{C\|\mathcal{V}\|_{L^\infty}Nt}{\mu\hbar}}\Big)^n\Big(\frac{n^2}{N}\Big)^{\frac{(1-\beta) n}{2}}\leq (CA)^n\left(\frac{n^{1-\beta}}{N^{\frac{1-3\beta}{2}}}\right)^n.\]
	\end{proof}
	
	\subsection{Effective equation for initial data stable by translation}

	In the classical setting, the Lenard-Balescu equation is a correction of the Vlasov dynamics in the case of uniformly distributed (in $x$) initial data. We have to precise the meaning in the quantum case. 
	
	\begin{definition}
		We define for $z\in\mathbb{T}_L$ the translation operator $\tau_z:\mathfrak{H}^N_L\to\mathfrak{H}^N_L$ by
		\[\forall \psi\in\mathfrak{H}^N_L,~(\tau_z\psi)(x_1,\cdots,x_N)=\psi(x_1+z,\cdots,x_N+z).\]
		
		A density matrix $F^N\in\LL^1(\mathfrak{H}_L^N)$ is said \emph{invariant by translation} if it commutes with all the translation operators. We denote $\LL_{i}^1(\mathfrak{H}_L^N)\subset\LL^1(\mathfrak{H}_L^N)$ the set of operator invariant by translation.
		
		We can write this property for density matrix in the $X$ variable: denoting $F^N(x_{1},\cdots,x_N,y_1,\cdots,y_N)$ its kernel, $F_N$ is invariant by translation if
		\[\forall z\in\mathbb{T}_L,~F_N(x_1+z,\cdots,x_N+z,y_1+z,\cdots,y_N+z) = F_N(x_1,\cdots,x_N,y_1,\cdots,y_N).\]
		
		This property can also be defined in the Fourier Variable:
		Denote $\mathbb{Z}^d_L:= \left(\tfrac{2\pi}{L}\mathbb{Z}\right)^{d}$, and for $(k_{[N]},l_{[N]})\in \mathbb{Z}_L^{2nd}$
		\[\hat{F}_N(k_{[N]},l_{[N]}):= \iint F_N(x_{[N]},y_{[N]})e^{i(-k_{[N]}\cdot x_{[N]}+l_{[N]}\cdot y_{[N]})}dx_{[N]}dy_{[N]}.\]
		Then $F^N$ is invariant by translation if and only if
		\[\sum_{j=1}^N k_j-l_j \neq 0 \Rightarrow \hat{F}_N(k_{[N]},l_{[N]}) = 0.\]
		
		Note that if $F_N$ is invariant by translation, then for any $\omega\subset[N]$, $\tr_{\omega}F^N$ is also invariant by translation.
	\end{definition}
	
	As the Hamiltonian of the system commutes with all the translation, $F^N(t)$ remains invariant by translation if the initial data is. In addition the function $x\mapsto f^N_1(t,x,x)$ is constant and so the effective potential has no effect.

	We can now compute the equations verified by $f^N_1$.
	\begin{equation}\begin{split}
	i\hbar\partial_t F_N^{(1)} &= [-\tfrac{\hbar^2}{2}\Delta,F_N^{[1]}]+\frac{N-1}{\mu}\tr_1[V_{1,2},F_N^{[2]}]=[-\tfrac{\hbar^2}{2}\Delta,F_N^{[1]}]+ \frac{N-1}{\mu}\tr_1 [V_{1,2},G_N^{[2]}].
	\end{split}\end{equation}
	using that for denoting $H^{(n)}(X_n,Y_n)$ the kernel of $H^{(n)}\in\LL^1(\mathfrak{H}_L^n)$, for $F\in\LL^1(\mathfrak{H}_L)$ invariant by translation, and $\mathcal{V}$ has spherical symmetry, $F(x,y)=\tilde{F}(x-y)$ for some $\tilde{F}$ and
	\begin{gather}
		[\Delta,F] = \Delta_x F(x-y)-\Delta_yF(x-y)=0,\\
		\label{eq:annulation de la trace}
		\tr_1[V_{1,2},F\otimes F](x,y)=\int  (\mathcal{V}(x-z)-\mathcal{V}(y-z))\tilde{F}(x-y)\tilde{F}(z-z)\ud{z}=0.
	\end{gather}
	
	To compute the equation verified by $G_N^{(2)}(t)$, we begin by derivating $F_N^{(2)}$. We denote 
	\[\begin{split}
	i\hbar\partial_tF_N^{[2]}=&[\tfrac{\hbar^2}{2}(-\Delta_1-\Delta_2)+\frac{1}{\mu}V_{1,2},F_N^{[2]}]+\frac{N-2}{\mu}\tr_{12} [V_{1,3}+V_{2,3},F_N^{[3]}]\\
	=&[\tfrac{\hbar^2}{2}(-\Delta_1-\Delta_2)+\frac{1}{\mu}V_{1,2},G_N^{[2]}]+F_N^{\{1\}}[-\tfrac{\hbar^2}{2}\Delta_2,F_N^{\{2\}}]+[-\tfrac{\hbar^2}{2}\Delta_1,F_N^{\{1\}}]F_N^{\{2\}}\\
	&+\frac{1}{\mu}[V_{1,2},F_N^{\{1\}}F_N^{\{2\}}]+\frac{N-2}{\mu}\Big(\tr_{12}[V_{2,3},G_N^{\{1,2\}}F_N^{\{3\}}]+\tr_{12}[V_{1,3}, G^{\{1,2\}}_{N}F_N^{\{3\}}]\Big)\\
	&+\frac{N-2}{\mu} \Big(F_N^{\{1\}}\tr_2 \Big[V_{2,3},F_N^{\{2\}}F_N^{\{3\}}+G_N^{\{2,3\}}\Big] +\tr_2 \Big[V_{1,3},F_N^{\{1\}}F_N^{\{3\}}+G_N^{\{1,3\}}\Big]F_N^{\{2\}}\Big) \\
	&+\frac{N-2}{\mu}\Big(\tr_{12}[V_{2,3},G_N^{\{1,3\}}F_N^{\{2\}}]+\tr_{12}[V_{1,3},G_N^{\{2,3\}}F_N^{\{1\}}]+\tr_{12}[V_{1,3}+V_{2,3},G_N^{\{1,2,3\}}]\Big).
	\end{split}\]
	Using that $\tr_{12}[V_{2,3},G_N^{\{1,2\}}F_N^{\{3\}}]$ and $\tr_{12}[V_{1,3},G_N^{\{1,2\}}F_N^{\{3\}}]$ vanish (in the same way that \eqref{eq:annulation de la trace}),
	\begin{align*}
	i\hbar\partial_t G_N^{\{1,2\}} =& \Big[\tfrac{\hbar^2}{2}(-\Delta_1-\Delta_2)+\frac{1}{\mu}V_{1,2},G_N^{\{1,2\}}\Big]+\frac{1}{\mu}\Big[V_{1,2},F_N^{\{1\}}F_N^{\{2\}}\Big]\\
	&+\frac{N-2}{\mu}\Big(\tr_{12}\Big[V_{1,3},g^N_{13}f^N_2\Big]+\tr_{12}\Big[V_{2,3},G_N^{\{1,3\}}F_N^{(2)}\Big]\Big)\\
	&-\frac{1}{\mu}\Big(F^{\{1\}}_N\tr_2\Big[V_{2,3},G_N^{\{2,3\}}\Big]+\tr_1\Big[V_{1,3},G_N^{\{1,3\}}\Big]F_N^{\{2\}}\Big)+\frac{N-2}{\mu}\tr_{12}\Big[V_{1,3}+V_{2,3},G_N^{\{1,2,3\}}\Big].
	\end{align*}
	
	Consider a family of initial data $F^N_0\in\LL_s^1(\mathfrak{H}^N_L)$ invariant by translation and such that the associated families $(G_{N}^{[n]}(t))_{n\leq N}$ vanish at time $0$. Then, using the estimation on $G^{[2]}_N$ and $G^{[3]}_N$ obtained in Proposition \ref{prop:estimation a priori}, we have that for $\tfrac{t}{\hbar}\leq \tfrac{\beta\log N}{C\|\mathcal{V}\|_\infty}$, $F_N^{[1]}$ and $G_{N}^{[2]}$ verify
	\begin{equation}
	\left\{\begin{split}
	i\hbar \mu\partial_t F_N^{[1]} &= \tr_{[1]} [V_{1,2},\mu G_{N}^{[2]}] + O_{\LL^1}\left(N^{-1+\beta}\right),\\
	i\hbar\partial_t \mu G_{N}^{[2]} &=[\hbar^2(-\Delta_1-\Delta_2),\mu G_{N}^{[2]}]+\tr_{[2]}\Big(\Big[V_{1,3},F_N^{\{1\}}G_N^{\{2,3\}}\Big]+\Big[V_{2,3},F_N^{\{2\}}G_N^{\{1,3\}}\Big]\Big) \\
	&\hspace{6cm}+[V_{1,2},F_N^{\{1\}}F_N^{\{2\}}] +O_{\LL^1}\left({N^{\frac{-1+9\beta}{2}}}\right).
	\end{split}\right.
	\end{equation}
	
	Denoting $F^L_0 :=  F^N_1(0)$, we define $F_\mu^L:\mathbb{R}^+\to \LL^1_s(\mathfrak{H}_L)$ and $G_\mu^L:\mathbb{R}^+\to \LL^1_s(\mathfrak{H}^2_L)$ the solutions of the equation
	\begin{equation}\label{eq: equation f^L}
	\left\{\begin{aligne}{c}
	F_\mu^L(t=0)=F_1^N(0),~G^L_\mu=0,\\[5pt]
	i\hbar \mu\partial_t F_\mu^L = \tr_1 [V_{1,2},G_\mu^L],\\[5pt]
	\begin{split}
	i\hbar\partial_t G_\mu^L =[\tfrac{\hbar^2}{2}(-\Delta_1-\Delta_2),G_\mu^L]+\tr_{[2]}\Big(\Big[V_{1,3},F_\mu^{L,\{1\}}G_\mu^{L,\{2,3\}}\Big]+\Big[V_{2,3},F_\mu^{L,\{2\}}&G_\mu^{L,\{1,3\}}\Big]\Big)\\
	&+[V_{1,2},F^L_{0}F^L_{0}].
	\end{split}
	\end{aligne}\right.
	\end{equation}
	
	From a straight forward application Proposition \ref{prop:estimation a priori} and Gronwall Lemma, we obtain
	\begin{prop}\label{prop:estimation une part}
		For all $t$ such that $\tfrac{Nt}{\mu\hbar}\leq \tfrac{\beta\log N}{C\|\mathcal{V}\|_\infty}$,
		\begin{equation}
		\|i\hbar \mu\partial_t (F_\mu^L(t)-F_\mu^{[1]}(t))\|_{\mathcal{L}^1(\mathfrak{H}_L)}+\|G_\mu^L(t)-\mu G_\mu^{[2]}(t)\|_{\mathcal{L}^1(\mathfrak{H}^2_L)} \leq O\left({N^{\frac{-1+9\beta}{2}}}\right).
		\end{equation}
	\end{prop}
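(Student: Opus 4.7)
The plan is a direct Duhamel--Gronwall comparison between the true BBGKY-derived system for $(F_N^{[1]},NG_N^{[2]})$ and its closed two-equation surrogate \eqref{eq: equation f^L}. Set $\delta F := F_N^L - F_N^{[1]}$ and $\delta G := G_N^L - NG_N^{[2]}$. By construction both quantities vanish at $t=0$. Subtracting the pair of equations satisfied by $(F_N^{[1]}, NG_N^{[2]})$ displayed above from the system \eqref{eq: equation f^L} yields the linear evolution
\[\begin{aligned}
i\hbar N \partial_t \delta F &= \tr_1[V_{1,2},\delta G] + R_F(t),\\
i\hbar \partial_t \delta G &= \bigl[\tfrac{\hbar^2}{2}(-\Delta_1-\Delta_2),\delta G\bigr] + \mathcal{A}_t(\delta F,\delta G) + S(t) + R_G(t),
\end{aligned}\]
where $R_F$, $R_G$ are exactly the remainders of order $N^{-1+\beta}$, $N^{(-1+9\beta)/2}$ produced by Proposition~\ref{prop:estimation a priori}; the operator $\mathcal{A}_t$ gathers the coupling pieces coming from the cubic nonlinearity (trace terms of the form $\tr_{[2]}[V_{i,3}, \delta F^{\{i\}} G_N^{L,\{j,3\}}]$ and $\tr_{[2]}[V_{i,3}, F_N^{\{i\}}\delta G^{\{j,3\}}]$); and $S(t) := [V_{1,2}, F_N^{\{1\}}(t)F_N^{\{2\}}(t) - F^L_0\otimes F^L_0]$ is the drift error produced by freezing the last source in \eqref{eq: equation f^L} at its initial value.

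Before running Gronwall, I would establish two auxiliary a priori bounds. First, for the drift: integrating the equation for $F_N^{[1]}$ and invoking $\|NG_N^{[2]}\|\lesssim N^{3\beta}$ from Proposition~\ref{prop:estimation a priori} gives $\|F_N^{[1]}(t)-F^L_0\|_{\LL^1}\lesssim (t/\hbar)\|\mathcal{V}\|_\infty N^{-1+3\beta}$, whence $\|S(t)\|_{\LL^1}\lesssim \log N\cdot N^{-1+3\beta}$ on the prescribed time window, a term dominated by $R_G$. Second, for the reference: a Duhamel expansion of \eqref{eq: equation f^L} (whose source $[V_{1,2},F^L_0\otimes F^L_0]$ is $\LL^1$-bounded uniformly in $N$) produces $\|G_N^L(t)\|_{\LL^1}\lesssim (t/\hbar)\|\mathcal{V}\|_\infty\lesssim \log N$. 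These two estimates convert $\mathcal{A}_t$ into a linear perturbation with coefficients bounded by $\|\mathcal{V}\|_\infty$ and $\|\mathcal{V}\|_\infty \log N$ respectively, and reduce $S$ to just another negligible source.

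With these ingredients the free transport commutators $[\Delta_j,\cdot]$ are isometries on $\LL^1$ and can be absorbed in an interaction-picture rewriting. Introducing $Z(t):=N\|\delta F(t)\|_{\LL^1}+\|\delta G(t)\|_{\LL^1}$ and combining the two differential inequalities derived above gives
\[\hbar\,\partial_t Z(t)\lesssim \|\mathcal{V}\|_\infty\,Z(t)+\|\mathcal{V}\|_\infty\,N^{(-1+9\beta)/2},\]
all lower-order sources ($N^{-1+\beta}$, $\log N\cdot N^{-1+3\beta}$) being dominated by the right-hand side. Gronwall on the time slice $t/\hbar\leq \beta\log N/(C\|\mathcal{V}\|_\infty)$ then yields $Z(t)\lesssim N^{C\beta}\cdot N^{(-1+9\beta)/2}$, and the claimed estimate follows after noting that, since $\beta\in(0,1/9)$ is a free parameter, the multiplicative $N^{C\beta}$ can be absorbed by replacing $\beta$ by an arbitrarily close smaller value. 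The bound on $\|i\hbar N\partial_t\delta F\|$ is immediate from the $\delta F$-equation once $\|\delta G\|$ is controlled.

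The main obstacle is purely bookkeeping: one has to check that every one of the three kinds of errors (the BBGKY truncation remainders, the coupling $\mathcal{A}_t$, and the drift $S$) stays below the target rate $N^{(-1+9\beta)/2}$ even after being amplified by the Gronwall exponential $e^{C\|\mathcal{V}\|_\infty t/\hbar}=N^{C\beta}$. This is what forces the logarithmic time window $t/\hbar\lesssim\beta\log N$ and the restriction $\beta\in(0,1/9)$: they are precisely what allows the loss $N^{C\beta}$ from Gronwall to be reabsorbed into the advertised rate.
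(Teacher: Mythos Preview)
Your proposal is correct and is precisely the Gronwall comparison the paper invokes in one line (``a straightforward application of Proposition~\ref{prop:estimation a priori} and Gronwall''). The only imprecision is the claim $\|G_N^L(t)\|_{\LL^1}\lesssim (t/\hbar)\|\mathcal{V}\|_\infty$: the trace terms $\tr_{[2]}[V_{i,3},F_N^{L}\,G_N^{L}]$ in \eqref{eq: equation f^L} feed $G_N^L$ back into its own equation with an $O(\|\mathcal{V}\|_\infty)$ coefficient, so the honest Duhamel/Gronwall bound is $\|G_N^L(t)\|\lesssim e^{C\|\mathcal{V}\|_\infty t/\hbar}\lesssim N^{C\beta}$ rather than $\log N$; this is harmless for your argument since you only use $\|G_N^L\|/N\ll 1$ and you already budget a multiplicative $N^{C\beta}$ loss in the final step.
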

	
	\subsection{The Larg Box limit}
	In this section, we want to take the limit $L\to \infty$.
	
	We begin by writing the equation \eqref{eq: equation f^L} in the Fourier variables. As the matrix $F_\mu^{L}$ and $G_\mu^{L}$ are invariant by translation,  
	\[\hat{F}_{\mu}^{L}(k_1,l_1)=0~{\rm if}~k_1\neq l_1,~\hat{G}_{\mu}^{L}(k_1,k_2,l_1,l_2)=0 ~{\rm if}~k_1+k_2\neq l_1+l_2.\]
	
	We introduce 
	\begin{gather}
	\forall v_1\in\mathbb{Z}_{ L/\hbar},~f_{\mu,L}(t,v_1) := \frac{L^{d}}{(2\pi \hbar)^d}\hat{F}_{\mu}^{L}(t,\tfrac{v_1}{\hbar},-\tfrac{v_1}{\hbar}),\\
	g_{\mu,L}(t,v_1,v_2,k):=\frac{L^{2d}}{(2\pi\hbar)^{2d}}\hat{G}_{\mu}^{L}(t,\tfrac{v_1}{\hbar},\tfrac{v_2}{\hbar},\tfrac{v_1}{\hbar}+k,\tfrac{v_2}{\hbar}-k).
	\end{gather}
	
	Using that $\hat{F}_{\mu}^{L}$ and $\hat{G}_{\mu}^{L}$ are Hermitian and positive, one have
	\begin{equation}\label{eq:condition de conjugaison} 
		f_{\mu,L}(t,v_1)\in \mathbb{R}^+,~ g_{\mu,L}(t,v_1,v_2,k) =\overline{g_{\mu,L}(t,v_1+\hbar k,v_2-\hbar k,-k)}. 
	\end{equation}

	With the change of unknown, the initial data becomes
	\begin{equation}
	\forall v_1\in\mathbb{Z}^d_{ L/\hbar},~ f_{0,L}(v_1) = \frac{1}{\mathcal{Z}_L}f_0(v_1).
	\end{equation}
	
	\begin{prop}
		Denoting for any function  $\Delta_kh(v)=h(v)-h(v-\hbar k)$, and $E(v):=\frac{|v|^2}{2}$,
		\begin{equation}
		\left\{\begin{aligne}{c}
		\hbar \mu i\partial_t {f}_{\mu,L}(v_1) = -2 ~\frac{(2\pi\hbar)^{d}}{L^{2d}}\!\! \sum_{k\in\mathbb{Z}^d_L,\,v_2\in\mathbb{Z}^d_{L/\hbar}}{\V}(k)({g}_{\mu,L}(v_1,v_2,k)-{g}_{\mu,L}(v_1+\hbar k,v_2-\hbar k,-k)),\\[16pt]
		\hbar\partial_t {g}_{\mu,L}(t)+i(K_2+B^L) {g}_{\mu,L}(t) = i A^L\\[7pt]
		A^L(v_1,v_2,k):= \V(k)	({f}_{0,L}(v_1){f}_{0,L}(v_2)-{f}_{0,L}(v_1+\hbar k){f}_{0,L}(v_2-\hbar k))
		\end{aligne}\right.
		\end{equation}
		where $K_2$ is the kinetic part, an unbounded multiplicative operator
		\begin{equation}
		K_2 g(v_1,v_2,k):=  \big(\Delta_{-k}E(v_1)+\Delta_{k}E(v_2)\big) g(v_1,v_2,k)
		\end{equation}
		and $B^L$ is a bounded operator on (note that it is even compact)
		\begin{equation}
		B^L g(v_1,v_2,k):=  {\V}(k)\Bigg(\Delta_{-k}f_{0,L}(v_1)\frac{(2\pi)^d}{\hbar^dL^d}\sum_{\tilde{v}_1\in\mathbb{Z}_{L/\hbar}}g(\tilde{v}_1,v_2,k)+ \Delta_{k}f_{0,L}(v_2)\frac{(2\pi)^d}{\hbar^dL^d}\sum_{\tilde{v}_2\in\mathbb{Z}_{L/\hbar}}g(v_1,\tilde{v}_2,k)\Bigg)
		\end{equation}
	\end{prop}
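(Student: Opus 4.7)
The plan is to take the Fourier transform of the system \eqref{eq: equation f^L}, exploiting the translation invariance of $F_N^L$ and $G_N^L$. Since the Hamiltonian preserves translation invariance, $\hat F_N^L(k,l)$ is supported on $\{k=l\}$ and $\hat G_N^L(k_1,k_2,l_1,l_2)$ on $\{k_1+k_2=l_1+l_2\}$, so the independent parameters are exactly $v_1$ and $(v_1,v_2,k)$ respectively, matching the definitions of $f_{N,L}$ and $g_{N,L}$ up to the normalization factors $L^d/(2\pi\hbar)^d$ and $L^{2d}/(2\pi\hbar)^{2d}$.

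For the first equation I would expand $\mathcal{V}(x) = L^{-d}\sum_{p\in\mathbb{Z}_L}\hat{\mathcal{V}}(p) e^{ipx}$ and compute the Fourier coefficients of $\tr_1[V_{1,2},G_N^L]$. The two terms of the commutator, once one resolves the Kronecker deltas coming from translation invariance and converts the remaining summation variable from $\mathbb{Z}_L$ to $\mathbb{Z}_{L/\hbar}$ via $v_2 = \hbar d$, take the shape $\hat{\mathcal{V}}(k)\,g_{N,L}(v_1,v_2,k)$ and $\hat{\mathcal{V}}(k)\,g_{N,L}(v_1+\hbar k, v_2-\hbar k, -k)$; by the conjugation relation \eqref{eq:condition de conjugaison} the second is the complex conjugate of the first. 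Collecting the normalization factors produces the claimed expression for $\hbar N i\partial_t f_{N,L}$.

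For the equation on $g_{N,L}$ I would treat each term on the right-hand side of \eqref{eq: equation f^L} in turn. The kinetic commutator $[\tfrac{\hbar^2}{2}(-\Delta_1-\Delta_2),G_N^L]$ acts in Fourier as multiplication by $\tfrac{\hbar^2}{2}(|k_1|^2+|k_2|^2-|l_1|^2-|l_2|^2)$, and the substitutions $k_i = v_i/\hbar$, $l_1 = v_1/\hbar + k$, $l_2 = v_2/\hbar - k$ collapse this algebraically into $\Delta_{-k}E(v_1)+\Delta_k E(v_2)$, i.e.\ the operator $K_2$. The source $[V_{1,2},F_0^L\otimes F_0^L]$ is handled by the same Fourier calculation as in the first equation, but now keeping the transfer $k$ generic; the two Kronecker deltas forced by translation invariance of $F_0^L$ pin down the single term $\hat{\mathcal{V}}(k)(f_{0,L}(v_1)f_{0,L}(v_2)-f_{0,L}(v_1+\hbar k)f_{0,L}(v_2-\hbar k))$, which is $A^L$. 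Finally, for the two coupling terms $\tr_{[2]}[V_{j,3},F_N^{L,\{j\}}G_N^{L,\{\cdot,3\}}]$ I would compute the partial trace in Fourier: the constraint $y_3=x_3$ fixes one output mode equal to the integrated input mode, leaving a free lattice variable $\tilde v_1$ (or $\tilde v_2$) summed over $\mathbb{Z}_{L/\hbar}$. Together with the differences $\Delta_{-k}f$ and $\Delta_{k}f$ coming from the two halves of each commutator, this gives a bounded operator acting on $g_{N,L}$ whose coefficients involve $\hat{\mathcal{V}}(k)$. At the order of interest one may replace $F_N^L$ by $F_0^L$, since by the first equation $f_{N,L}(t)-f_{0,L} = O(t/N)$, and this substitution yields exactly the operator $B^L$.

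The main obstacle is the bookkeeping of the normalization constants $L^d$, $\hbar^d$ and $(2\pi)^d$ propagating through the Fourier conventions, the partial traces (integrations over $\mathbb{T}_L$), and the velocity rescaling $v=\hbar k$ that sends $\mathbb{Z}_L$ to $\mathbb{Z}_{L/\hbar}$; a careful but essentially routine tracking of these factors produces the precise constants in front of $A^L$, $B^L$ and of the right-hand side of the $f_{N,L}$-equation.
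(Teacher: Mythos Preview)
Your proposal is correct and follows the same direct Fourier-transform computation as the paper; the paper in fact only writes out the calculation for the first equation and declares the second ``similar''. You are if anything more thorough, explicitly treating the kinetic, source and coupling contributions to the $g_{N,L}$-equation and correctly flagging the $F_N^L\to F_0^L$ replacement needed to match the stated $B^L$, a point the paper leaves implicit.
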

	\begin{proof}
		We recall that
		\begin{gather*}
		i\hbar \mu\partial_t F_\mu^L = \tr_1 [V_{1,2},G_\mu^L],\\
		\begin{split}
		i\hbar\partial_t G_\mu^L =[\tfrac{\hbar^2}{2}(-\Delta_1-\Delta_2),G_\mu^L]+\tr_{[2]}\Big(\Big[V_{1,3},F_\mu^{L,\{1\}}G_\mu^{L,\{2,3\}}\Big]+\Big[V_{2,3},F_\mu^{L,\{2\}}&G_\mu^{L,\{1,3\}}\Big]\Big)+[V_{1,2},F^L_{0}F^L_{0}].
		\end{split}
		\end{gather*}
		
		We set $\Psi:=[V_{1,2},G_\mu^L]$. Its kernel is
		\begin{align*}
		\Psi&(x,y)=\int_{\mathbb{T}_L} \left[\mathcal{V}(x-z)-\mathcal{V}(z-y)\right]{G}_\mu^L(x,z,y,z)\ud{z}\\
		&=\frac{1}{L^{5d}}\sum_{k,\ell_1,\ell_2,p_1,p_2\in\mathbb{Z}^d_L} \int_{\mathbb{T}_L} \left[e^{i(k(x-z)+\ell_1x+\ell_2z-p_1y-p_2z)}-e^{i(k(z-y)+\ell_1x+\ell_2z-p_1y-p_2z)}\right]\V(k)\hat{G}_\mu^L(\ell_1,\ell_2,p_1,q_2)\ud{z}\\
		&=\frac{1}{L^{4d}}\sum_{k,\ell_1,\ell_2,p_1\in\mathbb{Z}^d_L} \left[e^{i((k+\ell_1)x-p_1y)}\hat{G}_\mu^L(\ell_1,\ell_2,p_1,\ell_2-k)-e^{i(\ell_1x-(k+p_1)y)}\hat{G}_\mu^L(\ell_1,\ell_2,p_1,p_1+k)\right]\V(k)\\
		&=\frac{1}{L^{4d}}\sum_{k,\ell_1,\ell_2,p_1\in\mathbb{Z}^d_L} \left[\hat{G}_\mu^L(\ell_1-k,\ell_2,p_1,\ell_2-k)-\hat{G}_N^L(\ell_1,\ell_2,p_1-k,\ell_2+k)\right]\V(k)e^{i(\ell_1x-p_1y)}.
		\end{align*}
		We deduce that 
		\[\hat{\Psi}(\ell_1,\ell_1) = \frac{1}{L^{2d}}\sum_{k,\ell_2\in\mathbb{Z}^d_L}\left[\hat{G}_N^L(\ell_1+k,\ell_2-k,\ell_1,\ell_2)-\hat{G}_\mu^L(\ell_1,\ell_2,\ell_1+k,\ell_2-k)\right]\V(k)\]
		This gives the evolution equation of $f_{\mu,L}$. The evolution equation of $g_{\mu,L}$ can be computed in the same way.
	\end{proof}
	
	We define the couple $(f_\mu,g_\mu)$ on $\mathbb{R}^d$ and $\mathbb{R}^{3d}$ as the  solution of the equation
	\begin{equation}
	\left\{\begin{aligne}{c}
	\hbar N \partial_t {f}_\mu(v_1) = -\frac{4}{(2\pi)^{d}}\Im ~\int dk\,dv_2\V(k){g}_\mu(v_1,v_2,k),\\[10pt]
	\hbar\partial_t {g}_\mu(t)+i(K_2+B) {g}_\mu(t) = i A,\\[7pt]
	{f}_\mu(t= 0)=\Phi_0,~{g}_{\mu}(t= 0)=0,,
	\end{aligne}\right.
	\end{equation}
	where we denote 
	\begin{align}
	A(v_1,v_2,k)&:= \V(k)	(f_0(v_1)f_0(v_2)-f_0(v_1+\hbar k)f_0(v_2-\hbar k)),\\
	B h(v_1,v_2,k)&:=  {\V(k)}\Bigg(\Delta_{-k}f_0(k_1)\int \ud\tilde{v}_1 h(\tilde{v}_1,v_2,k)+ \Delta_{k}f_0(v_2)\int d\tilde{v}_1h(v_1,\tilde{v}_2,k)\Bigg).
	\end{align}
	
	We want to compare the $({f}_{\mu,L}(t),{g}_{\mu,L}(t))$ with $({f}_\mu(t),{g}_\mu(t))$. The space $\LL^1(\mathfrak{H}_L)$ and $\LL^1(\mathfrak{H}^{2d}_L)$ can be injected respectively in $\ell^1_{v_1}(\mathbb{Z}^d_{L/\hbar})$ and $\ell_k^\infty(\mathbb{Z}^d_L,\ell^1_{v_1,v_2}(\mathbb{Z}^2_{L/\hbar}))$ (see Section 1.5 of \cite{Golse}):
	\begin{gather*}
		\|f\|_{\ell_{v_1}^1}:=\frac{(2\pi\hbar)^d}{L^d}\sum_{v_1\in\mathbb{Z}^d_{L/\hbar}} |{f}(v_1)|\leq \left\|F\right\|_{\LL^1_{s,i}(\mathfrak{H}_L)},\\
		\|g\|_{\ell^\infty_k(\ell_{v_1,v_2}^1)}:=\sup_{k\in\mathbb{Z}^d_L}\frac{(2\pi\hbar)^{2d}}{L^{2d}}\sum_{v_1,v_2\in\mathbb{Z}^d_{L/\hbar}}|\hat{g}(v_1,v_2,k)|\leq  \|G\|_{\LL^1_{s,i}(\mathfrak{H}^2_L)}.
	\end{gather*}
	
	We want to find two Banach spaces, $\mathcal{E}_1$ containing both $L^1_{v_1}(\mathbb{R}^d)$ and the $\ell^1_{v_1}(\mathbb{Z}^d_{L/\hbar})$ for any $L$, and $\mathcal{E}_2$ containing both $L^\infty_{k}(\mathbb{R}^d,L^1_{v_1,v_2}(\mathbb{R}^{2d}))$ and the  $\ell_k^\infty(\mathbb{Z}_L,\ell^1_{v_1,v_2}(\mathbb{Z}^{2d}_{L/\hbar}))$ for any $L$. For $\mathcal{E}_1$ we take the set of bounded radon measure on $\mathbb{R}^d$
	with total variation norm, with injection
	\[\begin{aligne}{c}
	f\in L^1(\mathbb{R}^d)\mapsto f(v_1)\ud v_1,~~f^L\in\ell^1(\mathbb{Z}_L)\mapsto \frac{(2\pi\hbar)^{d}}{L^d}\sum_{v_1\in\mathbb{Z}^d_{L/\hbar}} f^L(v_1)\delta_{v_1}.
	\end{aligne}\]
	For $\mathcal{E}_2$ we consider the space of radon measure $h$ on $\mathbb{R}^{3d}$ such that the norm
	\[\|h\|_{\mathcal{E}_2} := \sup_{\substack{k\in\mathbb{R}^d\\ \e>0}}\frac{1}{|B_k(\e)|}\|h(v_1,v_2,\tilde{k})\|_{TV((B_k(\e))_k\times\mathbb{R}^d_{v_1}\times\mathbb{R}^d_{v_2})}\]
	is finite, with the injection
	\begin{gather*}
	g\in L^\infty_k(L^1_{v_1,v_2})\mapsto g(k,v_1,v_2)\ud{k}\ud{v_1}\ud{v_2},~\\
	g^L\in\ell_k^\infty(\ell^1_{v_1,v_2}(\mathbb{Z}^3_L)\mapsto \frac{(2\pi)^{3d}\hbar^{2d}}{L^{3d}}\sum_{\substack{v_1,v_2\in\mathbb{Z}^d_{L/\hbar},\\k\in\mathbb{Z}^d_L}} g^L(v_1,v_2,k)\delta_{v_1,v_2,k}.
	\end{gather*}
	
	Denoting, for $h(v_1,v_2,k)$ a radon measure on $\mathbb{R}^{3d}$, (here the brackets designate the duality product)
	\[\left<h(v_1,v_2,k)\right>_{v_1}:(v_2,k)\mapsto \left<h(v_1,v_2,k),1_{v_1}\right>_{v_1}.\]
	One can now extend the operator $B:L_k^\infty(L^1_{v_1,v_2})\to\ell^1_{v_1}(\mathbb{Z}^d_{L/\hbar})$ and $B^L:\ell_k^\infty(\ell^1_{v_1,v_2})\to\ell^1_{v_1}(\mathbb{Z}^d_{L/\hbar})$ as operators $\mathcal{E}_2\to\mathcal{E}_1$ by
	\begin{align*}
	B^L h(v_1,v_2,k):=  \V(k)\bigg(\Delta_{-k}f^L_0(v_1) \left<h(\tilde{v}_1,v_2,k)\right>_{\tilde{v_1}}+ \Delta_{k}f_0^L(v_2)\left< h(v_1,\tilde{v}_2,k)\right>_{\tilde{v_2}}\bigg)\\
	B h(v_1,v_2,k):=  \V(k)\bigg(\Delta_{-k}f_0(v_1) \left<h(\tilde{v}_1,v_2,k)\right>_{\tilde{v_1}}+ \Delta_{k}f_0(v_2)\left< h(v_1,\tilde{v}_2,k)\right>_{\tilde{v_2}}\bigg)
	\end{align*}

	\begin{prop}\label{prop:estimation grosse boite}
		Fix an initial data  $f_0$ with $\|\<v\>^{d+1}\<\nabla\> f_0\|_{L^\infty}<\infty$ .
		
		For any parameter $N,\hbar$ and $L$,
		\[\mu\hbar\|\partial_t(f_{\mu,L}-{f}_\mu)\|_{TV}\lesssim C\|g\|_{E_2}\leq \frac{\|\<v\>^{d+1}\<\nabla\> f_0\|_{L^\infty}}{L/\hbar}e^{\frac{4\|\V\|_{L^\infty}t}{\hbar}}.\]
	\end{prop}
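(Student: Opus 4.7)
The plan is to bound the discrepancy $\delta g := g_{N,L}-g_N$ (suitably embedded into $E_2$) by a Duhamel--Gronwall argument, and then to convert this into a total-variation bound for $\partial_t(f_{N,L}-f_N)$ by direct inspection of the two evolution equations.

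For the first inequality, rewrite both equations for $\hbar N\partial_t f$ in a common form. The Hermitian-conjugation identity \eqref{eq:condition de conjugaison} turns the difference inside the sum into an imaginary part, yielding
\[
\hbar N\partial_t f_{N,L}(v_1) = -4\,\Im\,\frac{(2\pi\hbar)^d}{L^{2d}}\sum_{k, v_2}\V(k)\,g_{N,L}(v_1,v_2,k), \qquad
\hbar N\partial_t f_N(v_1) = -\tfrac{4}{(2\pi)^d}\,\Im\!\int\V(k)\,g_N(v_1,v_2,k)\,dk\,dv_2.
\]
Viewed as Radon measures on $\mathbb{R}^d_{v_1}$, their difference is the $(v_2,k)$-marginal of $\V(k)\,\delta g$, whose TV-norm is $\lesssim\|\V\|_\infty\|\delta g\|_{E_2}$ by construction of the local-average norm $E_2$, which was designed precisely so that the Riemann-sum measures in $k$ embed continuously with controlled norm.

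For the second inequality, use Duhamel on the shared evolution $\hbar\partial_t g + i(K_2+B^{(L)})g = iA^{(L)}$, $g(0)=0$:
\[
\delta g(t) = \frac{i}{\hbar}\int_0^t e^{-i(t-s)K_2/\hbar}\bigl[(A^L-A) - (B^L-B)\,g_N(s) - B^L\delta g(s)\bigr]\,ds.
\]
The multiplier $K_2$ acts by a measurable phase and is therefore $E_2$-isometric, while $\|B^L\|_{E_2\to E_2}\leq 4\|\V\|_\infty\|f_{0,L}\|_\infty$ by direct inspection of its two rank-one pieces; a Gronwall inequality applied to the forward integral equation for $\|\delta g(t)\|_{E_2}$ then produces the exponential factor $e^{4\|\V\|_\infty t/\hbar}$.

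The residual source contributions $A^L-A$ and $(B^L-B)g_N$ are both driven by the Riemann-sum error between $f_{0,L}$ and $f_0$. A classical quadrature estimate yields, for $h\in L^\infty(\mathbb{R}^d)$ with $\langle v\rangle^{d+1}\langle\nabla\rangle h\in L^\infty$,
\[
\Bigl\|\tfrac{(2\pi\hbar)^d}{L^d}\sum_{v\in\mathbb{Z}_{L/\hbar}}h(v)\,\delta_v - \tfrac{1}{(2\pi)^d}h(v)\,dv\Bigr\|_{E_1}\lesssim \frac{\hbar}{L}\bigl\|\langle v\rangle^{d+1}\langle\nabla\rangle h\bigr\|_{L^\infty},
\]
where the weight $\langle v\rangle^{d+1}$ ensures integrability beyond the effective truncation length $L/\hbar$ and $\langle\nabla\rangle$ controls the piecewise-linear interpolation error over each cell of size $\hbar/L$. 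Combined with the Gronwall step, this yields the claimed prefactor $\|\langle v\rangle^{d+1}\langle\nabla\rangle f_0\|_{L^\infty}/(L/\hbar)$. The main obstacle is to propagate the quadrature estimate through the $E_2$-norm rather than plain $L^1$: this requires local-in-$k$ uniformity of the bound and exploits both the decay of $\V(k)$ and the polynomial weight on $f_0$ to keep the relevant integrals/sums uniformly controlled in $L$.
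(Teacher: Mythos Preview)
Your proof is correct and follows essentially the same approach as the paper: Gronwall on $\delta g$ in $E_2$ using unitarity of the $K_2$-flow and boundedness of $B^L$, with the source terms $A^L-A$ and $(B^L-B)g_N$ controlled by the quadrature error $\|f_0-f_{0,L}\|_{TV}\lesssim \hbar L^{-1}\|\langle v\rangle^{d+1}\langle\nabla\rangle f_0\|_{L^\infty}$. One cosmetic point: the natural operator bound on $B^L$ in $E_2$ uses $\|f_{0,L}\|_{TV}$ (which equals $1$), not $\|f_{0,L}\|_{L^\infty}$, and the ``obstacle'' you flag at the end is in fact immediate once $\|A-A^L\|_{E_2}$ and $\|B-B^L\|$ are bounded directly by $\|\V\|_{L^\infty}\|f_0-f_{0,L}\|_{TV}$.
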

	\begin{proof} 
		We begin by the following lemma which can be directly deduced from the definition of the operator $B$ and $B^L$.
		\begin{lemma}
			The operator $B$ and $B^L$ are bounded in $\LL(E_2)$, with the following bound
			\[\|B\|\leq \frac{4}{(2\pi)^d}\|\V\|_{L^\infty}\|f_0\|_{TV},~\|B^L\|\leq \frac{4}{(2\pi)^d}\|\V\|_{L^\infty}\|f^L_0\|_{TV},\]
			\[\|B-B^L\|\leq \frac{4}{(2\pi)^d}\|\V\|_{L^\infty}\|f_0-f^L_0\|_{TV}.\]
		\end{lemma}
		
		We have
		\[\partial_t(g_{\mu,L}-g_\mu)+iK_2(g_{\mu,L}-g_\mu)=-iB^L(g_{\mu,L}-g_L)-i(B-B^L)g_\mu +i(A-A^L).\]
		Using that $iK_2$ is the generator of an unitary group on $E_2$ and that $\| f\|_{TV} = \| f^L\|_{TV} =1$ (we begin with two probability measure),
		\[\|A-A^L\|_{E_2}\leq 4\|\V\|_{L^\infty}\|f_0-f_0^L\|_{TV},\]
		we have by a Gronwall lemma 
		\[\|g_{\mu,L}(t)-g_\mu(t)\|_{\mathcal{E}_2}\leq \|f_0-f_0^L\|_{TV}\exp\left({\frac{4\|V\|_{L^1}t}{(2\pi)^d\hbar}}\right).\]
		Because $\mathcal{V}$ verifies Assumption \ref{ass: potentiel},
		\[\mu\hbar\|\partial_t(f_{\mu,L}-{f}_N)\|_{TV}\lesssim\|g\|_{E_2}\lesssim \|f_0-f_0^L\|_{TV}\exp\left({\frac{4\|\mathcal{V}\|_{L^1}t}{(2\pi)^d\hbar}}\right).\]
		
		We need to  $\|f_0-f_0^L\|_{TV} $,
		We can decompose $\mathcal{Z}_L$ as
		\[\mathcal{Z}_L = 1+\frac{(2\pi\hbar)^d}{ L^d}\sum_{v\in\mathbb{Z}^d_{L/\hbar}} \left(f_0(v)-\int_{[-\frac{1}{2},\frac{1}{2}]^d}f_0(v+\tfrac{2\pi}{L/\hbar}\ell)d\ell \right)\]
		and finally,
		\begin{align*}
		\frac{(2\pi\hbar)^d}{L^d}\sum_{v\in\mathbb{Z}^d_{L/\hbar}} &\left|\frac{f_0(v)}{\mathcal{Z}_L}-\int_{[-\frac{1}{2},\frac{1}{2}]^d}f_0(v+\tfrac{2\pi}{L/\hbar}w)\ud w \right| \\
		\leq& \left|1-\frac{1}{\mathcal{Z}_L}\right|+ \frac{(2\pi\hbar)^d}{L^d}\sum_{v\in\mathbb{Z}^d_{L/\hbar}} \left|f_0(v)-\int_{[-\frac{1}{2},\frac{1}{2}]^d}f_0(v+\tfrac{2\pi}{L/\hbar}w)\ud{w} \right|\\
		\lesssim &
		\frac{\hbar^d}{L^d}\sum_{v\in\mathbb{Z}^d_{L/\hbar}} \frac{\|\<x\>^{d+1}\<\nabla\> f_0\|_{L^\infty}}{L/\hbar\<v\>^{d+1}}\lesssim \frac{\|\<x\>^{d+1}\<\nabla\> f_0\|_{L^\infty}}{L/\hbar}.
		\end{align*}

	\end{proof}
	
	\subsection{Computation of the collision term}
	In this final section, we study the limit of the $(f_\mu,g_\mu)$ as $\mu\to\infty$. The first step is to apply the Laplace transform in time
	
	\begin{prop}
		For any smooth test function $\varphi \in \mathcal{C}^\infty_c(\mathbb{R}^+\times\mathbb{R}^d)$, 
		\begin{multline}
		\mu\int_0^\infty\left<\partial_t\tilde{f}_\mu(\tau_N t,v_1)\varphi(t,v_1)\right>_{\!\!v_1}\!\!\ud{t} 	\\
		=  \int_{\mathbb{R}}\frac{\ud{\alpha}}{i\pi\hbar}\left<\frac{\tilde{\varphi}(\alpha,v_1)}{1+i\alpha} \left(T\left(\tfrac{\hbar(i-\alpha)}{\tau_N},k,v_1\right)-T\left(\tfrac{\hbar(i-\alpha)}{\tau_N},-k,v'_1\right)\right)\right\>_{\!\!k,v_1}
		\end{multline}
		where $\tilde{\varphi}(\alpha,v_1)$ is the Laplace transform in time of $\varphi$.
		\[\tilde{\varphi}(\alpha,v_1) := \frac{1}{2\pi}\int_0^\infty e^{t(1+i\alpha)}\varphi(t,v_1)\ud{t}\]
		and
		\begin{equation}
		T(\omega,k,v_1):=\frac{{\V}(k)}{(2\pi)^{d}}\left<\left((K_2+B)-\omega\right)^{-1}A(v_1,v_2,k)\right>_{\!\!v_2}.
		\end{equation}
	\end{prop}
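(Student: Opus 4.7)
The plan is to reduce the proposition to an explicit Laplace/resolvent identity for the linear ODE satisfied by $g_N$. Since $g_N(0)=0$ and $\hbar\partial_t g_N+i(K_2+B)g_N=iA$, Duhamel gives $g_N(s)=\frac{i}{\hbar}\int_0^s e^{-i(s-r)(K_2+B)/\hbar}A\,\mathrm{d}r$, and for any $\alpha_0$ with $\Re(\alpha_0)>0$ the Laplace transform is
\[
\hat g_N(\alpha_0)\;=\;\frac{A}{\alpha_0\bigl((K_2+B)-i\hbar\alpha_0\bigr)}.
\]
The resolvent is bounded because $K_2$ is multiplication by a real function (hence self-adjoint on the natural $L^2$ realisation) and $B$ is bounded on $E_2$ by the lemma already established in the large-box step, so $\|((K_2+B)-\omega)^{-1}\|\lesssim |\Im\omega|^{-1}$ after a Neumann perturbation.

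Starting from $\hbar N\partial_t f_N(v_1)=-\frac{4}{(2\pi)^d}\Im\langle \V(k)g_N(v_1,v_2,k)\rangle_{k,v_2}$, I pair with $\varphi$, insert the Fourier inversion $\varphi(t,v_1)=\int \tilde\varphi(\alpha,v_1)e^{-t(1+i\alpha)}\,\mathrm{d}\alpha$ (valid for $t\geq 0$ by a Bromwich contour argument on the line $\Re s=-1$, applied to the compactly supported $\varphi$), swap the $(t,\alpha)$ integrations by Fubini, and rescale $s=\tau_N t$. The time integral then becomes $\tau_N^{-1}\hat g_N(\alpha_0)$ with $\alpha_0=(1+i\alpha)/\tau_N$, and the key identification $i\hbar\alpha_0=\hbar(i-\alpha)/\tau_N=\omega$ produces the resolvent at exactly the spectral parameter of the statement, while the prefactor $1/(\tau_N\alpha_0)$ collapses into the factor $1/(1+i\alpha)$.

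To extract the antisymmetric combination $T(\omega,k,v_1)-T(\omega,-k,v_1')$ from the $\Im$, I use the Hermitian symmetry $\overline{g_N(v_1,v_2,k)}=g_N(v_1+\hbar k,v_2-\hbar k,-k)$ recorded in \eqref{eq:condition de conjugaison} together with the evenness $\V(-k)=\V(k)$. Writing $\Im z=(z-\bar z)/(2i)$, the conjugate piece is rewritten, via the measure-preserving substitution $(k,v_1,v_2)\mapsto(-k,v_1+\hbar k,v_2-\hbar k)$ of unit Jacobian, as a term in which the finite difference $\tilde\varphi(\alpha,v_1)-\tilde\varphi(\alpha,v_1+\hbar k)$ multiplies $g_N(v_1,v_2,k)$ itself. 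Once the resolvent representation has been introduced, a second application of the same substitution transports this finite difference from $\tilde\varphi$ onto the kernel $T$, yielding the advertised asymmetry $T(\omega,k,v_1)-T(\omega,-k,v_1')$ with $v_1'=v_1+\hbar k$.

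The main obstacle is the analytic justification of Fubini and of the Laplace/resolvent correspondence in the Bochner sense on $E_2$: one needs decay of $\tilde\varphi$ in $\alpha$ (provided by smoothness of $\varphi$ via integration by parts in $t$), together with the uniform bound $\|\hat g_N(\alpha_0)\|_{E_2}\lesssim (\tau_N/\hbar)\|A\|_{E_2}/|1+i\alpha|$ on the line $\Re\alpha_0=1/\tau_N$. Once these ingredients are in place, the remaining arithmetic --- combining $-4/(2\pi)^d$, the $1/(2i)$ from $\Im$, the $(2\pi)^d$ from the definition of $T$, and the $\tau_N$ factors into the single constant $1/(i\pi\hbar)$ --- is bookkeeping and requires only attention to signs.
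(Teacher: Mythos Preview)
Your proposal is correct and follows essentially the same route as the paper: Duhamel for $g_N$ combined with a Laplace/Fourier representation yields the resolvent $((K_2+B)-\omega)^{-1}A$ at $\omega=\hbar(i-\alpha)/\tau_N$, and the Hermitian symmetry \eqref{eq:condition de conjugaison} converts the imaginary part into the antisymmetric combination $T(\omega,k,v_1)-T(\omega,-k,v_1')$.

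The only stylistic difference is in the symmetry step. The paper uses \eqref{eq:condition de conjugaison} \emph{before} introducing the resolvent, writing directly
\[
N\partial_t f_N(v_1)=-\frac{2}{i\hbar(2\pi)^d}\bigl\langle \V(k)\bigl(g_N(v_1,v_2,k)-g_N(v_1',v_2',-k)\bigr)\bigr\rangle_{v_2,k},
\]
and then applies the resolvent identity to each piece separately; the second piece, after the trivial shift $v_2\mapsto v_2-\hbar k$ in the $v_2$-average, is exactly $(2\pi)^dT(\omega,-k,v_1')$. Your route---substituting $(k,v_1,v_2)\mapsto(-k,v_1+\hbar k,v_2-\hbar k)$ to transfer the shift onto $\tilde\varphi$, and then applying the same involution again to move it back onto the kernel---is correct but redundant: the two substitutions cancel, and you recover the paper's direct identification. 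It is simpler to keep $g_N(v_1',v_2',-k)$ as is and evaluate the resolvent there.

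One small remark on your justification of the resolvent bound: the relevant space is $E_2$ (a total-variation type norm), not $L^2$, so ``self-adjointness'' is not quite the right word; what matters, and what the paper already uses, is that $e^{-itK_2/\hbar}$ is an isometry on $E_2$ (multiplication by a unimodular function) and $B$ is bounded there, giving $\|((K_2+B)-\omega)^{-1}\|_{E_2}\lesssim |\Im\omega|^{-1}$ for $\hbar\|B\|<|\Im\omega|$, extended by analytic continuation.
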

	
	Note that the preceding proposition is justified by the existence of the resolvent $\left(K_2+B-\frac{\hbar}{\tau_N}(i-\alpha)\right)^{-1}$. 
	\begin{proof}
		The proof is the same than Lemma 5.2 of \cite{DSR}.
		
		We recall that $g_N$ is solution of 
		\begin{gather*}
		\hbar\partial_t {g}_\mu(t)+i(K_2+B) {g}_\mu(t) = i A\\
		{g}_{\mu}(t= 0)=0.
		\end{gather*}
		
		Using the Duhamel form of the evolution equation of $g_\mu$, and that $\delta(t) = \frac{1}{2\pi}\int e^{i\alpha t}\ud{\alpha}$ in the sense of distribution, 
		\begin{align*}
		\int_0^\infty g_\mu(\tau_Nt)\varphi(t)\ud{t}&=\frac{i}{\hbar}\int_0^\infty\int_0^{\tau_N t} e^{-\frac{i}{\hbar}(K_2+B)(t-t_1)}A\ud{t_1}~\varphi(t)\ud{t}\\
		&=\frac{i}{\hbar}\int_{(\mathbb{R}^+)^3}e^{-(\frac{1}{\tau_N}+\frac{i}{\hbar}(K_2+B))t_2}Ae^{-\frac{t_2}{\tau_N}}e^{t}\varphi(t)\delta_{t_1+t_2=\tau_Nt}\ud{t_1}\ud{t_2}\ud{t}\\
		&=\frac{i}{2\pi\hbar\tau_N}\int_{\mathbb{R}}\int_{(\mathbb{R}^+)^3}e^{-(\frac{1+i\alpha}{\tau_N}+\frac{i}{\hbar}(K_2+B))t_2}Ae^{-\frac{1+i\alpha}{\hbar\tau_N}t_1}e^{(1+i{\alpha})t}\varphi(t)\ud{t_1}\ud{t_2}\ud{t} \ud{\alpha}\\
		&=\frac{1}{2\pi}\int_{\mathbb{R}}\left(\frac{\hbar}{\tau_N}(\alpha-i)+(K_2+B)\right)^{\!\!\!-1}\hspace{-7pt}A~~\frac{\tilde{\varphi}(\alpha)}{1+i\alpha}\ud{\alpha}
		\end{align*}
		
		Using the symmetry property of $g_\mu(t,v_1,v_2,k)$, one has
		\[\mu \partial_t {f}_\mu(v_1) = -\frac{2}{i\hbar} ~\left< \V(k)\left({g}_\mu(v_1,v_2,k)-{g}_\mu(v'_1,v'_2,-k)\right)\right>_{v_2,k},\]
		where $v_1':= v_1+\hbar k$ and $v_2':=v_2-\hbar k$.
		
		The result is obtain by the combination of the both identities.
		
	\end{proof}
	
	\begin{prop}
		The function $T(\omega,k,v_1)$ verifies the following boundness and convergence results: for any $\omega\in\mathbb{C}\setminus\mathbb{R}$, 
		\begin{gather}
		\left|T(\omega,k,v_1)-T(\omega,-k,v'_1)\right|\lesssim {\V}^2(k)\|\<v\>^{2}\<\nabla\>^2f_0\|_{L^\infty}^2 \log\left(1+\frac{|\Re \omega|}{|\Im\omega|}\right)^2,\\
		\lim_{\substack{\omega\to 0\\ \Im\omega>|\Re \omega|}}\frac{T(\omega,k,v_1)-T(\omega,-k,v'_1)}{2\pi i} =\frac{ \V^2(k)\left< (f_0(v_2')f_0(v_1')-f_0(v_1)f_0(v_2))\delta_{\Delta_kE(v_2)+\Delta_{-k}E(v_1)}\right>_{v_2}}{(2\pi)^d|\tilde{\e}(k,i0-\Delta_{-k}E(v_1))|^2},
		\end{gather}
	\end{prop}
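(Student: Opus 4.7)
My approach is to exploit the \emph{rank-two} tensor structure of the operator $B$: for each frozen $k$, $Bh$ depends only on the two partial averages of $h$ in $v_1$ and $v_2$. Writing $g = (K_2+B-\omega)^{-1}A$ and setting $G_1(v_2,k) := \<g\>_{v_1}$ and $G_2(v_1,k) := \<g\>_{v_2}$, the identity $(K_2+B-\omega)g = A$ becomes
\begin{equation*}
g(v_1,v_2,k) = \frac{A(v_1,v_2,k) - \V(k)\Delta_{-k}f_0(v_1)\, G_1(v_2,k) - \V(k)\Delta_k f_0(v_2)\, G_2(v_1,k)}{\Delta_{-k}E(v_1) + \Delta_k E(v_2) - \omega}.
\end{equation*}
Integrating this identity in $v_1$ and in $v_2$ yields a coupled linear system on $(G_1,G_2)$ whose diagonal factors are exactly $1 + \V(k)\int \frac{\Delta_\bullet f_0}{\Delta_\bullet E - (\omega-\cdot)}\ud v$, i.e.\ a dielectric function $\tilde{\e}(k,\omega-\Delta_\bullet E(\cdot))$. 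Under the smallness assumption $\|\<v\>^{2d}\<\nabla\>^2 \Phi_0\|_{L^\infty}\|\V\|_{L^\infty} < 1/2$, this system is uniformly invertible, and its determinant is $|\tilde{\e}|^2$ up to conjugation.

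Since $T(\omega,k,v_1) = \frac{\V(k)}{(2\pi)^d} G_2(v_1,k)$, solving the linear system gives a closed formula for $T$. For the difference $T(\omega,k,v_1) - T(\omega,-k,v_1')$, I would use the symmetry $(v_1,v_2,k)\mapsto(v_1',v_2',-k)$, under which $\Delta_{-k}E(v_1) + \Delta_k E(v_2)$ changes sign. This turns the difference into the antisymmetric (imaginary) part of the resolvent. Applying Sokhotski--Plemelj $(x-\omega)^{-1} \to \mathrm{PV}(1/x) + i\pi\delta(x)$ as $\omega\to 0$ with $\Im\omega > |\Re\omega|$, the principal-value parts cancel and the $\delta$-contribution, paired with the $|\tilde{\e}|^{-2}$ factor from the determinant of the $2\times 2$ system, reproduces the stated limit on the energy surface $\Delta_k E(v_2) + \Delta_{-k}E(v_1) = 0$.

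For the logarithmic bound, I would estimate the closed expression for $T$ directly. Integrals of the form $\int \Delta_{-k}f_0(v_1)/(\Delta_{-k}E(v_1) + \alpha - \omega)\ud v_1$ produce a factor $\log(1 + |\Re\omega|/|\Im\omega|)$ controlled by $\|\<v\>^2\<\nabla\>^2 f_0\|_{L^\infty}$, using standard Lebesgue-point-type estimates for resolvents of multiplication operators against smooth data; the two factors in the final bound come from the two levels of averaging (once to form $G_2$, once in the expression for $T$). The denominator $\tilde{\e}$ is uniformly bounded away from zero by the same smallness assumption.

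The main obstacle will be setting up and inverting the $2\times 2$ system so that $\tilde{\e}$ appears exactly as in the statement, using the symmetry of $f_0$ and the $\hbar k$-shifts to reconcile the different parametrizations of the dielectric function. A further subtlety is the justification that the limit $\omega\to 0$ commutes with the resolvent: since $K_2$ has continuous spectrum through $0$, the regularization $\Im\omega > 0$ is essential, and the residual logarithmic factor in the first bound is precisely the trace of this failure of boundedness of the resolvent up to the spectrum.
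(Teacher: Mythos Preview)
Your reduction to a finite $2\times 2$ system does not close. After integrating your formula for $g$ in $v_2$ you correctly obtain
\[
\tilde{\e}\bigl(k,\omega-\Delta_{-k}E(v_1)\bigr)\,G_2(v_1)
= \Big\langle\frac{A}{\Delta_{-k}E(v_1)+\Delta_kE(v_2)-\omega}\Big\rangle_{v_2}
-\V(k)\,\Delta_{-k}f_0(v_1)\Big\langle\frac{G_1(v_2)}{\Delta_{-k}E(v_1)+\Delta_kE(v_2)-\omega}\Big\rangle_{v_2},
\]
and symmetrically for $G_1$. The diagonal factors are indeed $\tilde{\e}$, but the cross term on the right is an \emph{integral operator} applied to $G_1$ (with Cauchy-type kernel $(\Delta_{-k}E(v_1)+\Delta_kE(v_2)-\omega)^{-1}$), not a scalar multiple. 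Substituting one equation into the other produces an infinite iteration, not a closed $2\times 2$ algebra, and there is no ``determinant'' equal to $|\tilde{\e}|^2$. The smallness assumption lets you invert via a Neumann series, but that series will not collapse to the exact factor $|\tilde{\e}(k,i0-\Delta_{-k}E(v_1))|^{-2}$ in the limit; you would only get it perturbatively.

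The paper handles exactly this obstruction by a different device: it splits $K_2+B=L_1+L_2$ into two \emph{commuting} operators ($L_1$ acting in $v_1$, $L_2$ in $v_2$), each a rank-\emph{one} perturbation of a multiplication operator, hence with explicit resolvent carrying a single factor $\tilde{\e}^{-1}$. The joint resolvent is then written as a contour integral
\[
(K_2+B-\omega)^{-1}=\frac{1}{2\pi i}\int_{\mathbb R}(L_2+\beta-\tfrac{\omega}{2})^{-1}(L_1-\beta-\tfrac{\omega}{2})^{-1}\,\ud\beta,
\]
which separates the variables. The product $\tilde{\e}(k,\tfrac{\omega}{2}-\beta)\,\tilde{\e}(-k,\tfrac{\omega}{2}+\beta)$ then appears algebraically, and after a residue computation in $\beta$ at the pole $\beta=\Delta_{-k}E(v_1)-\tfrac{\omega}{2}$ one lands on $|\tilde{\e}(k,i0-\Delta_{-k}E(v_1))|^{-2}$ exactly. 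The logarithmic bound and the Sokhotski--Plemelj step are done on the resulting one-dimensional $\beta$-integrand, not on a coupled system. This contour-integral separation is the missing ingredient in your plan.
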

	
	\begin{proof}	
		We decompose $(K_2+B)$ as $L_1+L_2$, where $L_1$,$L_2$ are defined by
		\begin{align}
		L_1 h(v_1,v_2,k) &:= \Delta_{-k}E(v_1) h(v_1,v_2,k) +{\V(k)}\Delta_{-k}f_0(v_1)\left<h(\tilde{v}_1,v_2,k)\right>_{\tilde{k}_1}\\
		L_2 h(v_1,v_2,k) &:= \Delta_{k}E(v_2) h(v_1,v_2,k) +\V(k)\Delta_{k}f_0(v_2)\left<h(v_1,\tilde{v}_2,k)\right>_{\tilde{k}_2}.
		\end{align}
		
		The operators $L_1$ and $L_2$ are easier to study than $K_2+B$, which can be recover by 
		\begin{equation}
		((K_2+B^L)-\omega)^{-1} = \frac{1}{2\pi i}\int (L_2+\beta-\frac{\omega}{2})^{-1}(L_2-\beta-\frac{\omega}{2})^{-1} \ud{\beta}
		\end{equation}
		
		The resolvant $L_1$ and $L_2$ are given by 
		\begin{align}
		(L_1-\omega)^{-1}h &=\frac{h}{\Delta_{-k}E(v_1)-\omega}-\frac{{\V}(k)}{\tilde{\e}(-k,\omega)}\frac{\Delta_{-k}f_0(v_1)}{\Delta_{-k}E(v_1)-\omega}\left<\frac{h}{\Delta_{-k}E(\tilde{v}_1)-\omega}\right>_{\tilde{v}_1}\\
		(L_2-\omega)^{-1}h &=\frac{h}{\Delta_{k}E(v_2)-\omega}-\frac{{\V}(k)}{\tilde{\e}(k,\omega)}\frac{\Delta_{k}f_0(v_2)}{\Delta_{k}E(v_2)-\omega}\left<\frac{h}{\Delta_{k}E(\tilde{v}_2)-\omega}\right>_{\tilde{v}_2}.
		\end{align} 
		where the (modified) dielectric constant $\tilde{\e}$ is defined by 
		\begin{equation}
		\tilde{\e}(k,\omega):=1+{\V(k)}\left< \frac{\Delta_kf_0(k_*)}{\Delta_kE(v_*)-\omega}\right>_{v_*}=1+{\V(k)}\int \frac{f_0(k_*) -f_0(v_*-\hbar k)}{\hbar k\cdot\left(v_*-\frac{\hbar k}{2}\right)-\omega}\ud{k_*}
		\end{equation}
		
		First we give an estimation of $\tilde{\e}$
		\begin{lemma}\label{lemma: borne epsilon}
			There exists a constant $C>0$ such that if $f_0$ satifies the bound  $\|\mathcal{V}\|_{L^1}\|\<v\>^{2d}\<\nabla\>^2f_0\|_{L^\infty}<1/C$,
			the function $\omega\mapsto\tilde{\e}(k,\omega)$ is holomorphic on $\{\omega,\,\Im \omega \neq 0\}$ and there exist a constant $\eta\in(0,1)$ independent of $\omega$ such that 
			$|\tilde{\e}(k,\omega)|>\eta$. In addition, 
			\begin{equation}|1-\tilde{\e}(k,\omega)|\lesssim\left\{ \begin{split}
			&{\|\<v\>^{2d}\<\nabla\>^2f_0\|_{L^\infty}/|\Im \omega|},\\
			&{\|\<v\>^{2d}\<\nabla\>^2f_0\|_{L^\infty}}/{|\Re \omega|}~{\rm if}~|\Re \omega|> 4\hbar^2 |k|^2,
			\end{split}\right. \end{equation}
		and in the same way\begin{equation}
			|\tfrac{\partial}{\partial\Re \omega}\tilde{\e}(k,\omega)|\lesssim \left\{ \begin{split}
				&{\|\<v\>^{2d}\<\nabla\>^3f_0\|_{L^\infty}/|\Im \omega \hbar k|},\\
				&{\|\<v\>^{2d}\<\nabla\>^3f_0\|_{L^\infty}}/{|\Re \omega\hbar k|}~{\rm if}~|\Re \omega|> 4\hbar^2 |k|^2.
			\end{split}\right.
		\end{equation}
		\end{lemma}
		\begin{proof}
			\step{1} We begin by treating the lower bound of $\tilde{\e}(k,\omega)$: 
			We denote $\tilde{\omega}:=\frac{\Re \omega}{\hbar^2|k|^2}$,
			\begin{equation}\label{eq:decompo e}
			\int \frac{f_0(v_*) -f_0(v_*-\hbar k)}{\hbar k \cdot(v_*-\frac{1}{2}\hbar k)-\omega}\ud{v_*}=\int \frac{f_0(v_*+\hbar k(\tfrac{1}{2}+\tilde{\omega})-f_0(v_*+\hbar k(-\tfrac{1}{2}+\tilde{\omega})) }{\hbar k\cdot v_*-i\Im \omega}\ud{v_*}
			\end{equation}
			Hence we can decompose it in real and imaginary parts
			\begin{align*}
			\Re \int \frac{f_0(v_*) -f_0(v_*-\hbar k)}{\hbar k \cdot(v_*-\frac{1}{2}\hbar k)-\omega}\ud{v_*}=&\int_{-1/2}^{1/2} \int \frac{(v_*\cdot \hbar k)\,\hbar k\cdot\nabla f\left(v_*+\hbar k\left(s+\tilde{\omega}\right)\right) }{(\hbar k\cdot v_*)^2+(\Im \omega)^2}\ud{v_*}\ud{s}\\
			\Im \int \frac{f_0(v_*) -f_0(v_*-\hbar k)}{\hbar k \cdot(v_*-\frac{1}{2}\hbar k)-\omega}\ud{v_*}=&\int_{-1/2}^{1/2} \ud{s}\int \frac{ \Im \omega \,\hbar k\cdot\nabla f\left(v_*+\hbar k\left(s+\tilde{\omega}\right)\right) }{(\hbar k\cdot v_*)^2+(\Im \omega)^2}\ud{v_*}
			\end{align*}
			
			We only need to bound the real part of $\tilde{\e}$ to get a bound by below. Decomposing $v_*$ in orthogonally as $v_*:= v_\para+v_\perp$, where $v_\perp\cdot k =0$, and denoting $\tilde{v}_s:=\hbar\left(s+\tilde{\omega}\right)k$, we get for any constant $a>0$
			\[\begin{split}
			\Re\int \frac{f_0(v_*) -f_0(v_*-k)}{\hbar k \cdot(v_*-\frac{1}{2}\hbar k)-\omega}\ud{v_*}&\leq \int_{-1/2}^{1/2} \int_{|\hbar k\cdot v_*|\geq a} \frac{\hbar|k|\left|\nabla f\left(v_*+\tilde{v}_s\right)\right|}{a}\ud{v_*}\ud{s} \\
			+\int_{-1/2}^{1/2}& \int_{|\hbar k\cdot v_\para|< a}  \left|\frac{\hbar v_\para\cdot k\,\hbar k\cdot\left(\nabla f\left(v_\para +v_\perp+\tilde{v}_s\right)-\nabla f\left(-v_\para +v_\perp+\tilde{v}_s\right)\right) }{(\hbar k\cdot v_\para)^2+(\Im \omega)^2}\right|\ud{v}_*\ud{s}\\
			&\lesssim
			\left(\frac{\hbar|k|}{a}+\frac{a}{\hbar|k|}\right) \|\<v\>^{2d}\<\nabla\>^2f_0\|_{L^\infty}
			\end{split}\]
			
			Choosing $a=\hbar|k|$, for some contant $c>0$,	
			\[ |\tilde{\e}(k,\omega)|\geq 1-c
			\|\<v\>^{2d}\<\nabla\>^2f_0\|_{L^\infty}>0.\]
			
			\step{2} We treat know the estimation of $|1-\tilde{\e}(k,\omega)|$. The bound with respect to $|\Im \omega|^{-1}$ can be deduced from our estimation of the lower bound of $|\tilde{\e}(k,\omega)|$.
			
			For the bound with respect to $\Re \omega$, we suppose now that $\Re \omega> 4\hbar^2 |k|$. Then $\tilde{v}_s\cdot\hat{k}$ is bigger than $\tfrac{\Re \omega}{4\hbar |k|}$ and 		
			\begin{multline*}
			\left|\int \frac{f_0(v_*) -f_0(v_*-k)}{\hbar k \cdot(v_*-\frac{1}{2}\hbar k)-\omega}\ud{v_*}\right|\leq \int_{-1/2}^{1/2} \int_{|\hbar k\cdot v_*|\geq \frac{\Re \omega}{4}} \frac{\hbar|k|\left|\nabla f\left(v_*+\tilde{v}_s\right)\right|}{\Re \omega/4}\ud{v_*}\ud{s} \\
			+\int_{-1/2}^{1/2} \int_{|\hbar k\cdot v_\para|< \frac{\Re \omega}{4}}  \left|\frac{\hbar v_\para\cdot k\,\hbar k\cdot\left(\nabla f\left(v_\para +v_\perp+\tilde{v}_s\right)-\nabla f\left(-v_\para +v_\perp+\tilde{v}_s\right)\right) }{(\hbar k\cdot v_\para)^2+(\Im \omega)^2}\right|\ud{v}_*\ud{s}\\
			+\int_{-1/2}^{1/2} \int_{|\hbar k\cdot v_\para|< \frac{\Re \omega}{4}}  \left|\frac{\Im \omega\,\hbar k\cdot\left(\nabla f\left(v_\para +v_\perp+\tilde{v}_s\right)+\nabla f\left(-v_\para +v_\perp+\tilde{v}_s\right)\right) }{(\hbar k\cdot v_\para)^2+(\Im \omega)^2}\right|\ud{v}_*\ud{s}\\
			\lesssim
			\left(\hbar|k|+1\right) \frac{\|\<v\>^{2}\<\nabla\>^2f_0\|_{L^\infty}}{\Re \omega},
			\end{multline*}
			where we use the estimation
			\begin{equation*}\frac{1}{1+|v_\para +v_\perp+\tilde{v}_s|^{2d}}\lesssim \frac{1}{(1+|v_\para +\tilde{v}_s|^{d})(1+|v_\perp|^{d})}\lesssim\frac{\hbar |k|}{\Re \omega(1+|v_\perp|^{d})}
			\end{equation*}
			and the identity
			\begin{equation*}
			\int_\mathbb{R} \frac{\hbar |k| \Im \omega \ud{x}}{(\hbar |k| x)^2+(\Im \omega)^2} = {\pi}.
			\end{equation*}
			
			We conclude by using the bound $\V(k) = O(\<k\>^{-1}).$
			
			\step{3} Deriving \eqref{eq:decompo e} with respect to $\Re \omega$, one has
			\begin{align*}
				\tfrac{\partial}{\partial\Re \omega}\tilde{\e}(k,\omega)=\int \frac{k}{\hbar|k|^2}\cdot\frac{\nabla f_0(v_*+\hbar k(\tfrac{1}{2}+\tilde{\omega})-\nabla f_0(v_*+\hbar k(-\tfrac{1}{2}+\tilde{\omega})) }{\hbar k\cdot v_*-i\Im \omega}\ud{v_*}.
			\end{align*}
			The expected bound is obtained by using the same estimation than in Step 2.
		\end{proof}

		Fix $\beta\in\mathbb{R}$ and $\omega\in\mathbb{C}\setminus\mathbb{R}$. We introduce now the application	
		\begin{align*}
		\mathcal{B}(v_1,k,\beta):=\left<\left(L_2+\beta-\frac{\omega}{2}\right)^{-1}\left(L_1-\beta-\frac{\omega}{2}\right)^{-1}A(v_1,v_2,k)\right>_{v_2}.
		\end{align*}
		We want to compute the limit as $\omega\to 0$ with $\Im \omega>|\Re \omega|$ of	$\frac{1}{2\pi i}\int_{\mathbb{R}}B(v_1,k)\ud\beta$. For $v_3\in\mathbb{{R}}^d$, we denote $v_3':=v_3+\hbar k$
		\begin{multline}\label{eq:B}
		\mathcal{B}(v_1,k,\beta)=\frac{\V(k)}{\tilde{\e}\left(k,\frac{\omega}{2}-\beta\right)}\left<\frac{f_0(v_1)f_0(v_2)-f_0(v_1')f_0(v_2')}{\left(\Delta_{-k}E(v_1)-\beta-\frac{\omega}{2}\right)\left(\Delta_{k}E(v_2)+\beta-\frac{\omega}{2}\right)}\right>_{\!\!\!v_2}\\
		+\frac{-\V(k)^2}{\tilde{\e}\left(k,\frac{\omega}{2}-\beta\right)\tilde{\e}\left(-k,\frac{\omega}{2}+\beta\right)}\frac{\Delta_{-k}f_0(v_1)}{\Delta_{-k}E(v_1)-\beta-\frac{\omega}{2}}\left<\frac{f_0(v_3)f_0(v_2)-f_0(v_3')f_0(v_2')}{\left(\Delta_{-k}E(k_3)-\beta-\frac{\omega}{2}\right)\left(\Delta_{k}E(v_2)+\beta-\frac{\omega}{2}\right)}\right>_{\!\!v_2,v_3}
		\end{multline}
		
		Using the identities
		\begin{gather*}
		f_0(v_1)f_0(v_2)-f_0(v'_1)f_0(v'_2) = f_0(v_1)\Delta_kf_0(v_2)+\Delta_{-k}f_0(v_1)f_0(v_2'),\\
		\V(k)\left\<\frac{\Delta_kf_0(v_2)}{\Delta_kE(v_2)+\beta-\frac{\omega}{2}}\right\>_{\!\! v_2}=\tilde{\e}(k,\frac{\omega}{2}-\beta)-1,
		\end{gather*}
		the first line of \eqref{eq:B} becomes
		\begin{equation*}
		\frac{\tilde{\e}\left(k,\frac{\omega}{2}-\beta\right)-1}{\tilde{\e}\left(k,\frac{\omega}{2}-\beta\right)}\frac{f_0(v_1)}{\Delta_{-k}E(v_1)-\beta-\frac{\omega}{2}}+\frac{\V(k)}{\tilde{\e}\left(k,\frac{\omega}{2}-\beta\right)}\frac{\Delta_{-k}f_0(v_1)}{\Delta_{-k}E(v_1)-\beta-\frac{\omega}{2}}\left<\frac{f_0(v'_2)}{\Delta_{k}E(v_2)+\beta-\frac{\omega}{2}}\right>_{\!\!v_2},
		\end{equation*}
		and the second line $\frac{\V(k)\Delta_{-k}f_0(v_1)}{\Delta_{-k}E(v_1)-\beta-\frac{\omega}{2}}$ times
		\begin{equation*}
		\frac{1-\tilde{\e}\left(k,\frac{\omega}{2}-\beta\right)}{\tilde{\e}\left(k,\frac{\omega}{2}-\beta\right)\tilde{\e}\left(-k,\frac{\omega}{2}+\beta\right)}\left<\frac{f_0(v_3)}{\Delta_{-k}E(v_3)-\beta-\frac{\omega}{2}}\right>_{\!\!v_3}\!\!+\frac{1-\tilde{\e}\left(-k,\frac{\omega}{2}+\beta\right)}{\tilde{\e}\left(k,\frac{\omega}{2}-\beta\right)\tilde{\e}\left(-k,\frac{\omega}{2}+\beta\right)}\left<\frac{f_0(v_2')}{\Delta_{k}E(v_2)+\beta-\frac{\omega}{2}}\right>_{\!\!v_2}.
		\end{equation*}
		
		Hence, $\mathcal{B}(v_1,k,\beta)$ can be decompose into 3 peaces
		\begin{align*}
		&\mathcal{B}_1(v_1,k,\beta):=\frac{\tilde{\e}\left(k,\frac{\omega}{2}-\beta\right)-1}{\tilde{\e}\left(k,\frac{\omega}{2}-\beta\right)}\frac{f_0(v_1)}{\Delta_{-k}E(v_1)-\beta-\frac{\omega}{2}},\\
		&\mathcal{B}_2(v_1,k,\beta):=-\frac{\V(k)\Delta_{-k}f_0(v_1)}{\tilde{\e}\left(-k,\frac{\omega}{2}+\beta\right)(\Delta_{-k}E(v_1)-\beta-\frac{\omega}{2})}\left<\frac{f_0(v_3)}{\Delta_{-k}E(v_3)-\beta-\frac{\omega}{2}}\right>_{\!\!v_3},\\
		&\mathcal{B}_3(v_1,k,\beta):=\frac{\V(k)\Delta_{-k}f_0(v_1)}{(\Delta_{-k}E(v_1)-\beta-\frac{\omega}{2})\tilde{\e}\left(k,\frac{\omega}{2}-\beta\right)\tilde{\e}\left(-k,\frac{\omega}{2}+\beta\right)}\\
		&\hspace{6cm}\times\left(\left<\frac{f_0(v_2')}{\Delta_{k}E(v_2)+\beta-\frac{\omega}{2}}\right>_{\!\!v_2}-\left<\frac{f_0(v_2')}{\Delta_{k}E(v_2)+\beta+\frac{\omega}{2}}\right>_{\!\!v_2}\right).
		\end{align*}

		Note that that for any function $g$, $\Delta_{k}g(v_1-\hbar k)=-\Delta_{-k}g(v_1)$ and that $ \tilde{\e}(k,\bar{\omega})=\overline{ \tilde{\e}(k,{\omega})}$. The function $\beta\mapsto\tilde{\e}\left(k,\frac{\omega}{2}-\beta\right)^{-1}$  is holomorphic and bounded (thanks to Proposition \ref{lemma: borne epsilon}) in the half plane $\{\beta\in\mathbb{C},~\Im \beta<\Im \tfrac{\omega}{2}\}$. In the same way  $\beta\mapsto\tilde{\e}\left(-k,\frac{\omega}{4}+\beta\right)^{-1}$ is holomorphic in $\{\beta\in\mathbb{C}, \beta,~\Im \beta>-\Im \tfrac{\omega}{2}\}$. Using the residue Theorem, for any $R>\max(\Im \omega,4\hbar^2|k|^2)$ large enough,
		\[\begin{split}
		\int_{\mathbb{R}} \mathcal{B}_1(k,v_1,\beta) \frac{\ud{\beta}}{2i \pi} =& -\frac{f_0(v_1)(\tilde{\e}(k,\omega-\Delta_{-k}E(v_1))-1)}{\tilde{\e}(k,\omega-\Delta_{-k}E(v_1))}\\
		&+\int_{]-\infty,-R]\sqcup[-R,(-1+i)R]\sqcup[(-1+i)R,(1+i)R]\sqcup[(1+i)R,R]\sqcup[R,\infty[} B_1(k,v_1,\beta) \frac{\ud{\beta}}{2i \pi}
		\end{split}\]
		
		Using the estimation of Proposition \ref{lemma: borne epsilon}, the remaining integral is of order $O(R^{-1})$, and vanishes as $R\to \infty$. We can conclude that	
		\[\begin{split}
		\lim_{\substack{\omega\to 0\\ \Im\omega>|\Re \omega|}}\int \big(\mathcal{B}_1(k,v_1,\beta)-\mathcal{B}_1(-k,&v_1',\beta)\big)\frac{\ud\beta}{2\pi i}=f_0(v_1)\left(\frac{1}{\tilde{\e}\left(k,i0-\Delta_{-k}E(v_1)\right)}-\frac{1}{\tilde{\e}\left(-k,i0-\Delta_{k}E(v_1')\right)}\right)\\
		&=\frac{2if_0(v_1)}{|\tilde{\e}(k,+i0-\Delta_{-k}E(v_1))|^2}\Im\left(1+\frac{{\V}(k)}{(2\pi)^d}\left<\frac{\Delta_kf(v_2)}{\hbar k\cdot(v_2-v_1-\hbar k)+i0}\right>_{v_2}\right)\\
		&=-\frac{2\pi i{\V}(k)f_0(v_1)}{\tilde{\e}(k,+i0-\Delta_{-k}E(v_1))|^2}\left<\Delta_kf(v_2)\delta_{2\hbar k\cdot(v_2-v_1-k)}\right>_{v_2}.
		\end{split}\]
		In the last line, we used the Sokhotskii-Plemelj formula: in the sens  of measure, one has
		\[\Im\frac{1}{k\cdot(v_2-v_1) + i0}= -\pi\delta_{k\cdot(v_2-v_1)}.\]
		
		In the same way, as $\mathcal{B}_2(v_1,k,\beta)$ is holomorphic on the half plane $\{\beta\in\mathbb{C},~\Im \beta<\Im \tfrac{\omega}{2}\}$, one has
		\[\lim_{\substack{\omega\to 0\\ \Im\omega>|\Re \omega|}}\int \big(\mathcal{B}_2(k,v'_1,\beta)-\mathcal{B}_2(-k,-v_1',\beta)\big)\frac{\ud\beta}{2\pi i}=0.\]
		
		We treat now $\mathcal{B}_{3}$. Note that  $\mathcal{B}_{3}(k,v_1,\beta))-\mathcal{B}_{3}(k,v_1',-\beta)$ is equal to 
		\begin{align*}
			&\frac{\V(k)}{\tilde{\e}\left(k,\frac{\omega}{2}-\beta\right)\tilde{\e}\left(-k,\frac{\omega}{2}+\beta\right)}\Bigg(\frac{\Delta_{-k}f_0(v_1)}{\Delta_{-k}E(v_1)-\beta-\frac{\omega}{2}}-\frac{\Delta_{-k}f_0(v_1)}{\Delta_{-k}E(v_1)-\beta+\frac{\omega}{2}}\Bigg)\\
			&\hspace{6cm}\times\Bigg(\left<\frac{f_0(v_2')}{\Delta_{k}E(v_2)+\beta+\frac{\omega}{2}}\right>_{v_2}-\left<\frac{f_0(v_2')}{\Delta_{k}E(v_2)+\beta-\frac{\omega}{2}}\right>_{v_2}\Bigg)\\
			=&\Bigg\<\frac{\V(k)\Delta_{-k}f_0(v_1)f_0(v_2')}{\tilde{\e}\left(k,\frac{\omega}{2}-\beta\right)\tilde{\e}\left(-k,\frac{\omega}{2}+\beta\right)}\Bigg(\frac{1}{\Delta_{-k}E(v_1)-\beta-\frac{\omega}{2}}-\frac{1}{\Delta_{-k}E(v_1)-\beta+\frac{\omega}{2}}\Bigg)
			\\
			&\hspace{6cm}\times\Bigg(\frac{1}{\Delta_{k}E(v_2)+\beta+\frac{\omega}{2}}_{v_2}-\frac{1}{\Delta_{k}E(v_2)+\beta-\frac{\omega}{2}}\Bigg)\Bigg\>_{v_2}\\
		\end{align*}
		
		Using Lemma \ref{lemma: borne epsilon}, we obtain the following bound
		\begin{gather*}
			\begin{split}
				\Bigg|\left<\frac{f_0(v_2')}{\Delta_{k}E(v_2)+\beta+\frac{\omega}{2}}\right>_{v_2}-\left<\frac{f_0(v_2')}{\Delta_{k}E(v_2)+\beta-\frac{\omega}{2}}\right>_{v_2}\Bigg|&\leq \left<\frac{f_0(v_2')|\omega|}{\left|(\Delta_kE(v_2)+\beta)^2+\omega^2/4\right|}\right>\\
				&\leq C\|\<v\>^{2}\<\nabla\>^2f_0\|_{L^\infty}\log\left(1+\frac{|\Re \omega|}{|\Im\omega|}\right)
			\end{split}\\
			\left|\frac{1}{2\pi i}\int \left(\mathcal{B}_{3}(v_1,k,\beta)-\mathcal{B}_{3}(v_1',-k)\right)\ud{\beta}\right|\leq C\V(k)\|\<v\>^{2}\<\nabla\>^2f_0\|_{L^\infty}^2 \log\left(1+\frac{|\Re \omega|}{|\Im\omega|}\right)^2.
		\end{gather*}
		
		We need now the following variant of the Sokhotskii-Plemelj formula :
		\begin{lemma}
			In the sens of distribution in $\mathcal{D}'(\mathbb{R}_\beta\times \mathbb{R}^d_{v_2})$, one have
			\begin{equation}
				\lim_{\substack{\omega\to 0\\ \Im\omega>|\Re \omega|}} \left(\frac{1}{\beta+\omega}-\frac{1}{\beta-\omega}\right)\left(\frac{1}{k(v_2-v_1)+\beta+\omega}-\frac{1}{k(v_2-v_1)+\beta-\omega}\right) = \delta_\beta\delta_{k\cdot(v_2-v_1)},
			\end{equation}
			with bounds that involves only the $0$-th and $1$-st order derivatives.
		\end{lemma}
		
		We deduce, using the $\mathcal{C}^1$ bound in Lemma \ref{lemma: borne epsilon}, that
		\begin{multline*}
		\lim_{\substack{\omega\to 0\\ \Im\omega>|\Re \omega|}}\frac{1}{2\pi i}\int \left(\mathcal{B}_{3}(v_1,k,\beta)-\mathcal{B}_3(v_1',-k,-\beta)\right)\ud{\beta}\\[-5pt]
		=\frac{2\pi i \V(k)\Delta_{-k}f_0(v_1)}{|\tilde{\e}\left(k,i0-\Delta_{-k}E(v_1)\right)|^2}\left< f_0(v_2-k)\delta_{\Delta_kE(v_2)+\Delta_{-k}E(v_1)}\right>_{v_2}
		\end{multline*}

		Finally,
		\begin{multline*}
		\lim_{\substack{\omega\to 0\\ \Im\omega>|\Re \omega|}}\left(\left<
		\Big(K_2+B-{\omega}\Big)^{-1}A(v_1,v_2,k)\right>_{v_2}-\left<
		\Big(K_2+B-\omega\Big)^{-1}A(v_1',v_2',-k)\right>_{v_2}\right)\\
		=\frac{2\pi i \V(k)}{|\tilde{\e}\left(k,i0-\Delta_{-k}E(v_1)\right)|^2}\left< (f_0(v_2')f_0(v_1')-f_0(v_1)f_0(v_2))\delta_{\Delta_kE(v_2)+\Delta_{-k}E(v_1)}\right>_{v_2},
		\end{multline*}
		and is uniformly bounded by $C\V(k)\|f\|^2$.
		
	\end{proof}

	We can now conclude the proof of Theorem \ref{thm:limite champ moyen}:
	
	First we have the identity
	\[Q^\hbar_{LB}(f)(v_1):=\frac{2}{(2\pi)^{d}}\left<\frac{ \V(k)^2\delta_{\Delta_kE(v_2)+\Delta_{-k}E(v_1)}}{\hbar|\tilde{\e}\left(k,i0-\Delta_{-k}E(v_1)\right)|^2} (f_0(v_2')f_0(v_1')-f_0(v_1)f_0(v_2))\right>_{v_2,k}.\]
	
	Using dominated convergence theorem, 
	\begin{align*}
	{\mu}\int_0^\infty\!\left<\partial_t{f}_\mu(\tau_Nt,v_1)\varphi(t,v_1)\right>_{\!\!v_1}\!\! \ud{t} 
	\underset{N\to \infty}{\longrightarrow} & \int_{\mathbb{R}}\ud{\alpha}\left<\frac{\tilde{\varphi}(\alpha,v_1)}{1+i\alpha}Q_{LB}(f_0)(v_1)\right>_{v_1}\\
	&= \int_{0}^\infty\left<{\varphi}(t,v_1)Q^\hbar_{LB}(f_0)(v_1)\right>_{v_1}\ud{t}
	\end{align*}
	
	One can now use the estimation of Propositions \ref{prop:estimation une part} and \ref{prop:estimation grosse boite}.	For $\varphi:\mathbb{R}_t^+\times\mathbb{R}^d_{v_1}$ a smooth test function with compact support in time in $[0,T_*]$, and $\tau_N$ such that $\tfrac{N\tau_N}{\hbar \mu} = (\log N)^{1-\alpha}$ for some $\alpha\in(0,1)$,
	\begin{align*}
	&{\mu}\int_0^\infty \frac{(2\pi)^d}{L^d}\sum_{k\in\mathbb{Z}_L} \varphi\left(t,\hbar{k}\right)\partial_t \hat{F}_N^{1}(\tau_Nt,k,k) \ud{t}-	{\mu}\int_0^\infty\!\left<\partial_t{f}_\mu(\tau_Nt,v_1)\varphi(t,v_1)\right>_{\!\!v_1}\!\! \ud{t}   \\
	&=  O\left(\|\mu\partial_t (\hat{F}_N^{1}-\hat{F}_{\mu,L})\|_{L^\infty([0,T_*],\mathcal{L}^1)}\right)+O\left(\|\mu\partial_t ({f}_\mu-{f}_{\mu,L})\|_{L^\infty([0,T_*],E_1)}\right)\\
	&=O\left(N^{-1/4}+\frac{\exp\left({C\frac{T_*\tau_N}{\hbar}}\right)}{L/\hbar}\right)
	\end{align*}	
	for some constant $C$. Fixing $L\geq 2\hbar\log \tfrac{\tau_N}{\hbar}$, the preceding expression converges to $0$.

	\section{Cauchy theory of the Quantum Lenard-Balescu equation}\label{sec:cauchy theory}
	The next part is dedicated to the proof of Theorem \ref{thm:Cauchy theory}. The proof is based on the work of Duerinckx and Winter \cite{DW}, based on \cite{Guo}.
	
	\subsection{Bound of the dielectric constant $\e_\hbar(F,k,v_1)$}
	
	We begin by bounding the dielectric constant $\e_\hbar$. The bounds are similar to the one of $\e_0$ provided in \cite{DW}.
	
	\begin{prop}\label{prop:borne sur epsilone mieux}
		We fix the dimension $d\geq 2$. For any $\mathcal{V}$ satisfying Assumption \ref{ass: potentiel},
		\begin{enumerate}
			\item \emph{Non-degenerency at the Maxwellian}: For all $k,v\in\mathbb{R}^d$, \label{i}
			\begin{equation}
			|\e_0(M,k,v)|\simeq 1.
			\end{equation}
			
			\item Stability at fix $\hbar$: for $\Phi=M+\sqrt{M}f$ and $\tilde{\Phi}=M+\sqrt{M}\tilde{f}$,$\delta_0\in]0,1/2]$, \label{ii}
			\begin{equation}
			|\e_\hbar(\Phi,k,v)-\e_\hbar(\tilde{\Phi},k,v)|\lesssim  \left\|f-\tilde{f}\right\|_3.
			\end{equation}
			
			\item Limit when $\hbar\to0$: for $\Phi=M+\sqrt{M}f$, \label{iii}
			\begin{equation}
			|\e_\hbar(\Phi,k,v)-\e_0({\Phi},k,v)|\lesssim \hbar\left\|f\right\|_2.
			\end{equation}
			
			\item Boundedness: For $(r,q)\in\mathbb{N}^2$,
			\begin{equation}\label{iv}
			|\nabla_k^r\nabla_v^q\e_\hbar(\Phi,k,v)|\lesssim \frac{\< k\>^r\<v\>^r}{|k|^r}\left(1+\left\|f\right\|_{r+q+2}\right)
			\end{equation}
		\end{enumerate}
	\end{prop}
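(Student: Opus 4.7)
My plan is to treat the four bounds in turn, after setting up a common representation. Rewriting the finite difference as an integral,
\[
\Phi(v_*) - \Phi(v_* - \hbar k) = \hbar\int_0^1 k\cdot\nabla\Phi(v_* - s\hbar k)\,\ud s,
\]
cancels the $\hbar$ prefactor in the denominator and gives
\[
\e_\hbar(\Phi,k,v) - 1 = \V(k)\int_0^1\!\!\int \frac{k\cdot\nabla\Phi(v_* - s\hbar k)}{k\cdot(v_* - v - \hbar k) - i0}\,\ud v_*\,\ud s,
\]
which is a Penrose-type integral in which the semiclassical parameter appears only through shifts in the argument of $\nabla\Phi$ and through the shift $\hbar k$ in the denominator. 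All four bounds reduce to estimates on variants of this integral, via the same Sokhotskii--Plemelj decomposition and the direction-splitting argument already exploited in the proof of Lemma \ref{lemma: borne epsilon}.

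Bound (iii) comes from expanding: replacing $\nabla\Phi(v_* - s\hbar k)$ by $\nabla\Phi(v_*)$ yields a Taylor remainder of size $O(\hbar|k|\,\|\nabla^2\Phi\|)$ in $s$, and shifting the denominator from $k\cdot(v_*-v-\hbar k)$ to $k\cdot(v_*-v)$ produces another $O(\hbar)$ contribution; the leading term reproduces $\e_0(\Phi,k,v)$, and the remainders are controlled by $\hbar\|f\|_2$ after extracting the Gaussian weight from $\sqrt M$. For (ii), the same representation applied with $\Phi - \tilde\Phi = \sqrt M(f-\tilde f)$ in the numerator reduces the difference to a Penrose integral of $\nabla(\sqrt M(f-\tilde f))$; this singular integral is bounded by a weighted $L^\infty$ norm of the integrand, which is itself dominated by $\|f-\tilde f\|_3$ through Sobolev embedding with Gaussian weight (the exponent $3$ being the smallest that works uniformly for $d\geq 2$). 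For (iv), derivatives in $v$ pass directly onto $\nabla\Phi$ via translation invariance, while derivatives in $k$ either produce extra $\langle v\rangle$-factors (absorbed by $\sqrt M$) or powers of $|k|^{-1}$ coming from the denominator; each of the latter can be exchanged against one more $v_*$-derivative on $\Phi$ by integration by parts. Bookkeeping these operations yields exactly the weighted bound (iv).

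The main obstacle is (i), the uniform lower bound $|\e_0(M,k,v)| \gtrsim 1$. Using $\nabla M(v_*) = -v_* M(v_*)$ and Sokhotskii--Plemelj, $\e_0(M,k,v)$ reduces by rotational symmetry to an explicit one-dimensional Gaussian integral depending only on $|k|$ and $\hat k\cdot v$. The imaginary part is proportional to $-(\hat k\cdot v)\,\V(k)$ times a strictly positive Gaussian factor, and therefore vanishes exactly on the hyperplane $\hat k \cdot v = 0$; on that hyperplane the singularity of the principal-value integrand cancels explicitly and one computes $\Re\,\e_0(M,k,v) = 1 + \V(k)$, which is bounded away from zero under Assumption \ref{ass: potentiel} (this is the Penrose stability criterion for the Maxwellian). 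Away from the hyperplane, the explicit one-dimensional Gaussian formula combined with the decay of $\V$ gives the desired lower bound by a direct continuity argument, while the upper bound $|\e_0(M,k,v)| \lesssim 1$ is immediate from the boundedness of $\V$ and the distributional integrability of the Penrose kernel.
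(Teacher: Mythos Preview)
Your treatment of (ii)--(iv) follows essentially the same strategy as the paper: rewrite the finite difference as an $s$-integral of $\hat k\cdot\nabla\Phi$, control the resulting Hilbert-type integral by Sobolev embedding along $\mathbb{R}\hat k$ (after integrating out the orthogonal directions), and for (iv) integrate by parts to trade $k$-derivatives of the singular denominator for $v_*$-derivatives on $\Phi$. One cosmetic difference: the paper shifts the integration variable so that the denominator in $\e_\hbar$ becomes $\hat k\cdot(v-v_*)-i0$ (with $s\in[1,2]$ rather than $[0,1]$), which makes the difference $\e_\hbar-\e_0$ live entirely in the numerator and renders your separate ``denominator-shift'' step unnecessary.

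For (i) the paper simply quotes Lemma~2.1 of \cite{DW}, whereas you sketch a direct argument. Your reduction is correct --- the function depends only on $(|k|,w)$ with $w=\hat k\cdot v$, one computes $\mathrm{Im}\,\e_0(M,k,v)=-\sqrt{\pi/2}\,\V(k)\,w\,e^{-w^2/2}$, which under Assumption~\ref{ass: potentiel} vanishes only at $w=0$, and there $\e_0=1+\V(k)$ --- but the last step does not close. Assumption~\ref{ass: potentiel} bounds $|\V(k)|^2(1+|k|)^s$ above and below but says nothing about the sign of $\V$, so it does \emph{not} exclude $\V(k)=-1$ for some $k$. The uniform lower bound $|\e_0(M,k,v)|\gtrsim 1$ is a genuine Penrose-stability statement which requires either positivity of $\V$ (repulsive interaction) or a smallness condition $\|\V\|_{L^\infty}<1$; the result in \cite{DW} is proved under such an additional hypothesis, inherited implicitly here. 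Without that input your compactness-plus-continuity argument cannot conclude, since the candidate zero set $\{w=0,\ \V(k)=-1\}$ has not been ruled out.
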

	
	\begin{proof}
		\step{1} The proof of \eqref{i} has been performed in Lemma 2.1 of \cite{DW}.
		
		\step{2} Proof of \eqref{ii}.
		Setting $\hat{k}:=\tfrac{k}{|k|}$,
		\[\begin{split}
		\e_\hbar(\Phi,k,v)&=1+{\V(k)}\int_{-1}^0\ud{s}\int \frac{\hat{k}\cdot\nabla \Phi(v_*+s\hbar k)}{\hat{k}\cdot(v-v_*-\hbar k)-i0}\ud{v_*}=1+{\V(k)}\int_{1}^2\ud{s}\int \frac{\hat{k}\cdot\nabla \Phi(v_* -s\hbar k)}{\hat{k}\cdot(v-v_*)-i0}\ud{v_*}\end{split}\]
		
		As $\e_\hbar(\cdot,k,v)$ is an affine operator, one can fix $\tilde{f}=0$. 
		
		Using Sobolev's inequalities, we deduce for any $s\in[1,2]$
		\[\left|\int \frac{\hat{k}\cdot\nabla (\sqrt{M}f)(v_* -s\hbar k)}{\hat{k}\cdot(v-v_*)-i0}\ud{v_*}\right|\lesssim \left\|\int \frac{\hat{k}\cdot\nabla (\sqrt{M}f)(v_* -s\hbar k)}{\hat{k}\cdot(v-v_*)-i0}\ud{v_*}\right\|_{H^{1}(\mathbb{R}\hat{k})}\]
		
		Splitting the integral over $v_*\in  <\hat{k}>\oplus<\hat{k}>^\perp$, and applying the usual bound on the Hilbert transform and trace operator, 
		\[\begin{split}
		\left|\int \frac{\hat{k}\cdot\nabla (\sqrt{M}f)(v_* -s\hbar k)}{\hat{k}\cdot(v-v_*)-i0}\ud{v_*}\right|&\lesssim\left\|\int \hat{k}\cdot\nabla (\sqrt{M}f)(v_*^\perp +\cdot)\ud{v_*^\perp}\right\|_{H^{1}(\mathbb{R}\hat{k})}\lesssim \|f\|_2
		\end{split}\]
		
		\step{3} Proof of \eqref{iii}. We have		
		\[\e_\hbar(\Phi,k,v)-\e_0({\Phi},k,v) = {\V(k)}\int_{1}^2\ud{s}\int \frac{\hat{k}\cdot\nabla \left[F(v_* -s\hbar k)-F(v_*)\right]}{\hat{k}\cdot(v-v_*)-i0}\ud{v_*}.\]
		
		Using the same estimation as Step 2, 
		\[\begin{split}
		|\e_\hbar(\Phi,k,v)-\e_0({\Phi},k,v)| \lesssim&  |\V(k)|\int_{1}^2\ud{s}\left\|\left<v\right>^{-r}\left<\nabla\right>^{\frac{3}{2}+\delta_0} \left[\left(M+\sqrt{M}f\right)(\cdot-s\hbar k)-\left(M+\sqrt{M}f\right)\right]\right\|_{L^2}\\
		\lesssim& \hbar\left(1+\left\|f\right\|_{2}\right).
		\end{split}\]
		
		\step{4} Proof of \ref{iv}. We begin by the derivative in $v$:
		\[\nabla_v^q \e_\hbar(\Phi,k,v)=\V(k)\int_{1}^2\ud{s}\int \frac{\hat{k}\cdot\nabla^q\nabla \Phi(v-v_* -s\hbar k)}{\hat{k}\cdot v_*-i0}\ud{v_*}.\]
		Hence, without lost of generality, we can suppose that $q= 0$.
		
		We apply the Leibniz formula to 
		\[\nabla_k^r\frac{{k}\cdot\nabla^q \Phi(v-v_* -s\hbar k)}{{k}\cdot v_*-i0}\]
		
		For $r_1+r_2=r$
		\[\nabla_k^{r_1}k\cdot\nabla^{q+1} \Phi(v-v_* -s\hbar k) = (-s\hbar)^{r_1} k\cdot\nabla^{q+r_1+1}\Phi(v-v_* -s\hbar k)+(-s\hbar)^{r_1-1} \nabla^{q+r}\Phi(v-v_* -s\hbar k)\]
		
		Denoting $v_*^\para := (\hat{k}\cdot v_* )\hat{k}$, we have
		\begin{align*}
		\nabla_k^{r_2}\frac{1}{k\cdot v_*-i 0}&= \sum_{r_3=0}^{r_2}\binom{r_2}{r_3}\frac{(-1)^{r_2}r_2!}{|k|^{r_2-r_3}}\frac{ \hat{k}^{\otimes (r_2-r_3)}\otimes_{\rm sym}(v_*^\perp)^{\otimes r_3}}{(k\cdot v_*-i 0)^{r_3+1}}\\
		&=\frac{r_2!}{|k|^{r_2}}\sum_{r_3=0}^k\binom{r_2}{r_3}\frac{(-1)^{r_3}}{(r_2-r_3)!}\left[\hat{k}^{\otimes r3}:\nabla_{v_*}^{r_3}\frac{1}{k\cdot v_*-i 0}\right]{ \hat{k}^{\otimes r_2-r_3}\otimes_{\rm sym}(v_*^\perp)^{\otimes r_3}}
		\end{align*}

		Using integration by part, one can transfer the $r_3$ derivative in $v_*$ into $F$. 
		
		\begin{multline*}
		\int \nabla_{v_*}^{r_3}\frac{1}{k\cdot v_*-i 0}(v_*^\perp)^{\otimes r_3}\nabla^{q+r_1}\Phi(v-v_* -s\hbar k)\ud{v_*}=\int \frac{(v_*^\perp)^{\otimes r_3}\nabla^{q+r_1+r_3}\Phi(v-v_* -s\hbar k)}{k\cdot v_*-i 0} \ud{v_*}\\
		=O\left(\left(\<\hbar k\>^r_3+\<v\>^r_3\right)\left(1+\left\|\left(\nabla-\frac{v}{2}\right)^{q+r_1+r_3+2}f\right\|\right)\right)
		\end{multline*}
		This conclude the proof.
	\end{proof}

	We deduce from the preceding lemma the following bound
	\begin{corollary}
		There exist two constants $C_0>0$ and $\hbar_0>0$ such that for any $\hbar<\hbar_0$ and $\|f\|_3<C_0$, we have $|\e_\hbar(\Phi,k,v)|\simeq 1$.
	\end{corollary}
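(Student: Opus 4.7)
The plan is to chain the three estimates of Proposition \ref{prop:borne sur epsilone mieux} by the triangle inequality, working from the known Maxwellian bound outwards.

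First I would apply (iii) with $f=0$ (so $\Phi=M$): the proof of (iii) actually delivers a bound of the form $|\e_\hbar(M,k,v)-\e_0(M,k,v)|\lesssim \hbar(1+\|0\|_2)\lesssim \hbar$. Combined with the non-degeneracy (i), $|\e_0(M,k,v)|\simeq 1$, there exist constants $0<c_1<c_2$ and a universal constant $C_1$ such that
\begin{equation*}
c_1-C_1\hbar \leq |\e_\hbar(M,k,v)|\leq c_2+C_1\hbar.
\end{equation*}
Choosing $\hbar_0:=c_1/(2C_1)$ yields $|\e_\hbar(M,k,v)|\simeq 1$ uniformly in $k,v$ and in $\hbar<\hbar_0$.

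Next I would use the stability estimate (ii) with $\tilde\Phi=M$, giving
\begin{equation*}
\bigl|\e_\hbar(\Phi,k,v)-\e_\hbar(M,k,v)\bigr|\leq C_2\|f\|_3
\end{equation*}
for some universal $C_2$. Setting $C_0:=c_1/(4C_2)$ and assuming $\|f\|_3<C_0$ gives
\begin{equation*}
|\e_\hbar(\Phi,k,v)|\geq |\e_\hbar(M,k,v)|-C_2\|f\|_3\geq \tfrac{c_1}{2}-\tfrac{c_1}{4}=\tfrac{c_1}{4},
\end{equation*}
together with an upper bound of the same form. This yields the desired equivalence $|\e_\hbar(\Phi,k,v)|\simeq 1$ for all $(k,v)$ and for all $\hbar<\hbar_0$.

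There is no real obstacle here, because all the hard analytic work (Sokhotski--Plemelj type bounds for the Hilbert-type integral defining $\e_\hbar$, and the explicit $\hbar\to 0$ comparison) has been packed into Proposition \ref{prop:borne sur epsilone mieux}. The only mild point to be careful about is the slight mismatch between the statement of (iii) (bound $\hbar\|f\|_2$) and its proof (bound $\hbar(1+\|f\|_2)$): the latter is what is actually needed when specializing to $f=0$, and is indeed what the proof delivers. Once the two small constants $\hbar_0$ and $C_0$ are fixed in this order, the argument is just two applications of the reverse triangle inequality.
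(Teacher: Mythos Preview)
Your argument is correct and is exactly the intended one: the paper gives no proof of this corollary beyond the phrase ``We deduce from the preceding lemma'', and the deduction is precisely the two triangle-inequality steps you wrote, combining (i), (iii) at $f=0$, and (ii). Your remark about the mismatch in (iii) (the proof yields $\hbar(1+\|f\|_2)$, which is what is needed for $f=0$) is also well taken.
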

	
	\subsection{The discrete difference norm}

In order to control the linearized collision operator $\mathcal{L}_\hbar$, we need to introduce an adapted norm, which looks like a Sobolev norm with wait.

\begin{definition}
	We introduce $\mathcal{H}_\hbar$ the Hilbert space of norm $\|~\|_\hbar$
	\begin{equation}
	\|g\|_\hbar^2 := \frac{c_d}{\hbar^{2}}\int \left[\left({g_1}-{g'_1}\right)^2M_2+g_1^2\left(\sqrt{M_2}-\sqrt{M'_2}\right)^2\right] \frac{ \V(k)^2\delta_{ k\cdot(v_2-v_1-\hbar k)}}{|\e_\hbar(M,k,v_1)|^2}\ud{k}\ud{v_2}\ud{v_1}.
	\end{equation}
	
	In the same way, we introduce for $\hbar = 0$ the space $\mathcal{H}_0$ with norm
	{\begin{gather}
		\|g\|^2_0:=\int \nabla g_1 \left(\int B(v_1,v_2)M_2\ud{v_2}\right)\nabla g_1\ud{v_1} +\int  g_1v_1 \left(\int B(v_1,v_2)M_2\ud{v_2}\right) g_1v_1\ud{v_1}\\
		\label{eq:def de B}B(v_1,v_2):=B(M,v_1,v_2) = \int \frac{|\V(k)|^2k\otimes k}{|\e_0(M,k,v_1)|^2}\delta_{k\cdot(v_1-v_2)}\ud{k}.
		\end{gather} }
\end{definition}

\begin{prop}\label{prop:form equivalent norm}
	We fix the dimension $d\geq 2$.
	
	There exists $\hbar_0>0$ such that for any $\hbar\in(0,\hbar_0)$, $\alpha\in(0,1/2]$ and $g$ a test function,
	\begin{equation}\label{eq:equivalence des normes}
	\frac{1}{\hbar^2}\int\left({g_1}-{g'_1}\right)^2\frac{\V^2(k)}{|k|}e^{-\alpha(v_1\cdot\hat{k})^2}\ud{k} \ud{v_1}+\|\<v\>^{-{1/2}}g\|^2\simeq\|g\|_\hbar.
	\end{equation}
	where the constant are independent of $\hbar$.
\end{prop}
\begin{remark}
	It has been proved in \cite{DW} that
	\begin{equation}
	\|g\|_0^2 \simeq \left\|\<v\>^{-\frac{3}{2}}\tfrac{v}{|v|}\cdot\nabla g(v)\right\|^2+\left\|\<v\>^{-\frac{1}{2}}\left({\rm Id}-\tfrac{v\otimes v}{|v|^2}\right)\nabla g(v)\right\|^2+\left\|\<v\>^{-\frac{1}{2}}g(v)\right\|^2.
	\end{equation}
\end{remark}
\begin{proof}
	\step{1}  We want to prove the upper bound
	\begin{equation}
	\frac{1}{\hbar^{2}}\int g_1^2\left({M_2}^\alpha-{M'_2}^\alpha\right)^2 {  \V(k)^2\delta_{ k\cdot(v_2-v_1-\hbar k)}}\ud{k}\ud{v_2}\ud{v_1}\lesssim \|\<v\>^{-{1/2}}g\|^2
	\end{equation}	
	
	First, for $\alpha\in(0,\tfrac{1}{2})$,
	\begin{equation*}
	\int g_1^2\left({M_2}^\alpha-{M'_2}^\alpha\right)^2 \frac{ \V(k)^2\delta_{ k\cdot(v_2-v_1-\hbar k)}}{\hbar^2}\ud{k}\ud{v_2}\ud{v_1}
	=\int{g_1}^2\frac{C\V^2(k)}{\hbar^2|k|}\left(e^{-\alpha (v_1\cdot \hat{k}+\hbar k)^2}-e^{-\alpha (v_1\cdot \hat{k})^2}\right)^2\ud{k} \ud{v_1}
	\end{equation*}
	
	We introduce the change of variable $k\mapsto(|k|,x,\sigma)\in\mathbb{R}^+\times[0,1]\times \mathbb{S}^{d-2}_{v_1}$, (where we denote $\mathbb{S}^{d-2}_{v^\perp_1} := \{\sigma\in\mathbb{S}^{d-1}|\sigma\cdot v_1=0\}$) defined by
	\begin{equation}\label{eq:changement de variable k}
	k = |k|\left( \sqrt{1-x^2} \sigma +x\tfrac{v_1}{|v_1|}\right),~|\sigma|=1=0, \text{ and }\sigma\cdot v_1=0.
	\end{equation}
	The Jacobian of this transformation is 
	\[\ud k = |k|^{d-1} \left(1-x^2\right)^{\frac{d-3}{2}}\ud{|k|}\ud{x}\ud{\sigma}.\]

	\begin{multline}\label{eq:2.30}
	\int\frac{ \V^2(k)}{|k|}\left(e^{-\alpha (v_1\cdot \hat{k}+\hbar k)^2}-e^{-\alpha (v_1\cdot \hat{k})^2}\right)^2\ud{k}\\
	= \int \frac{\V^2(k)}{|k|}\left(\int_0^12\alpha\hbar k(|v_1|x+s\hbar |k|)\exp\left(-\alpha(x|v_1|+s\hbar|k|)^2\right)\ud{s}\right)^2|k|^{d-1} (1-x^2)^{\frac{d-2}{2}}\ud{|k|}\ud{x}\\
	\leq C\hbar^2\alpha^2 \int_0^1\int \V^2(k)|k|^d\int_{-1}^1\exp\left(-\alpha(x|v_1|+s\hbar|k|)^2\right)\ud{x}\ud{|k|}\ud{s}\leq \frac{C\hbar^2\alpha^{3/2}}{<v_1>}.
	\end{multline}
	
	\step{2} We treat the lower bound
	\begin{equation}
	\frac{1}{\hbar^{2}}\int g_1^2\left({M_2}^\alpha-{M'_2}^\alpha\right)^2 { \V(k)^2\delta_{ k\cdot(v_2-v_1-\hbar k)}}\ud{k}\ud{v_2}\gtrsim \||v|\<v\>^{-{3/2}}g\|^2-C\hbar^2\|\<v\>^{-{1/2}}g\|^2
	\end{equation}	
	We can restrict our self to the $k$ that verifies $|k|<1$.
	\begin{multline*}
	\int_{-1}^1\left(\int_0^1(|v_1|x+s\hbar |k|)e^{-\alpha(|v_1|x+s\hbar|k|)^2}\ud s\right)^2(1-x^2)^{\frac{d-2}{2}}\ud{x}\\
	\gtrsim \int_{-1/2}^{1/2}\left(|v_1x|^2-\frac{\hbar^2 |k|}{4}\right) e^{-2\alpha\left(|v_1x|^2 +|\hbar k|^2\right)}\ud x\\
	\gtrsim \frac{C}{|v_1|}\int_{-2|v_1|}^{2|v_1|}y^2e^{-2\alpha y^2}\ud y -O\left(\frac{\hbar^2}{\<v_1\>}\right)\geq \frac{C_\alpha |v_1|}{\<v_1\>^{3}} -O\left(\frac{|k|^2\hbar^2}{\<v_1\>}\right).
	\end{multline*}
	We can conclude by integrating with respect to $k$.
	
	\step{3} We treat now the "differential part" of the norm: for $\alpha\in(0,1)$,	
	\begin{equation*}
	\frac{2}{\hbar^2}\int\left({g_1}-{g'_1}\right)^2\V^2(k)\delta_{k\cdot(v_2-v_1-\hbar k)} ({M'_2})^{2\alpha}\ud{k} \ud{v_2}\ud{v_1}\\
	= \frac{C}{\hbar^2}\int\left({g_1}-{g'_1}\right)^2\frac{\V^2(k)}{|k|}e^{-\frac{\left(v_1\cdot{\hat{k}}\right)^2}{2}}\ud{k} \ud{v_1}.
	\end{equation*}

	We begin by cutting the large $k$ and $v_1$. Using Assumption\ref{ass: potentiel}, we have
	\begin{align*}
		\int \frac{\V^2(k)}{|k|}\exp\left(-\frac{\alpha\left(v_1\cdot{k}\right)^2}{|k|^2}\right)\ind_{|k|\geq \frac{1}{\hbar}}\ud{k}  &\lesssim \int_{\frac{1}{\hbar}}^\infty \int_{-1}^1{\V^2(k)}|k|^{d-2} \left(1-x^2\right)^{\frac{d-3}{2}} e^{-\alpha|v_1|^2 x^2}\ud{x}\ud|k|\\
		&\lesssim\frac{\hbar^4}{\<v_1\>}.
	\end{align*}
	
	Suppose now that $|k|\leq \frac{1}{\hbar}$. For $\beta\in(\tfrac14,\tfrac12)$, We define $k_1,k_2$ by
	\begin{gather*}
	k_{1}:=\frac{|k|}{4}\left(\frac{1}{\sqrt{\beta}}+\frac{\sqrt{1-x^2}}{\sqrt{1-\beta x^2}}\right)\left( \sqrt{1-\beta x^2} \sigma +\sqrt{\beta} x\tfrac{v_1}{|v_1|}\right)\\
	k_{2}:=\frac{|k|}{2}\left(\frac{1}{\sqrt{\beta}}-\frac{\sqrt{1-x^2}}{\sqrt{1-\beta x^2}}\right)\left( -\sqrt{1-\alpha x^2} \sigma +\sqrt{\beta} x\tfrac{v_1}{|v_1|}\right).
	\end{gather*}
	
	Note that the terms  $\frac{|k_1|}{|k|}$ and $\frac{|k_2|}{|k|}$ are inside some segment $[C^{-1},1]$, with $C$ independent of $\hbar$. Hence the Jacobians of $k\mapsto k_1$ and $k\mapsto k_2$ have upper and lower bounds. In addition, we have chosen $k_1$ and $k_2$ such that $2k_1+k_2 =k$. Using that $\hbar |k|=O(1)$, we deduce that
	\begin{gather*}
		(v_1\cdot\hat{k}_1)^2\leq \beta (v_1\cdot\hat{k})^2,\\
		((v_1+\hbar k_1)\cdot\hat{k}_1)^2\leq \tfrac23(v_1\cdot\hat{k}_1)^2 + 4 |\hbar k_1|^2\leq \tfrac43\beta (v_1\cdot\hat{k})^2 + O(1),\\
		((v_1+2\hbar k_1)\cdot\hat{k}_2)^2\leq \tfrac23(v_1\cdot\hat{k}_1)^2 + 52 |\hbar k_1|^2\leq \tfrac43\beta (v_1\cdot\hat{k})^2 + O(1).
	\end{gather*}
	
	We deduce that for $\alpha'\in(\frac{1}{3}\alpha,\tfrac{3}{8}\alpha )$, and choosing $\beta := \tfrac{3}{4}\alpha'$, we have
	\begin{align*}
	(g(v_1+\hbar k)-g(v_1))^2e^{-{\alpha\left(v_1\cdot\hat{k}\right)^2}}\lesssim& (g(v_1+\hbar k_1)-g(v_1))^2e^{-\alpha'\left(v_1\cdot{\hat{k}_1}\right)^2}\\
	&+(g(v_1+2\hbar k_1))-g(v_1+\hbar k_1))^2e^{-\alpha'\left((v_1+\hbar k_1)\cdot{\hat{k}_1}\right)^2}\\
	&+(g(v_1+\hbar k)-g(v_1+2\hbar k_1))^2e^{-\alpha'\left((v_1+2\hbar k_1)\cdot{\hat{k}_2}\right)^2}.
	\end{align*}
	
	Using Assumption \ref{ass: potentiel} and that $|k_1|,|k_2|\leq |k|$, we deduce that 
	\begin{align*}
	\int\left({g_1}-{g'_1}\right)^2\frac{\V^2(k)}{|k|}e^{-\frac{\alpha\left(v_1\cdot{k}\right)^2}{|k|^2}}\ind_{|k|\leq \frac{1}{\hbar}}\ud{k} \ud{v_1}&\lesssim \int\left(g(v_1)-g(v_1+\hbar k_1)\right)^2\frac{\V^2(k_1)}{|k_1|}e^{-\frac{\alpha'\left(v_1\cdot{k_1}\right)^2}{|k_1|^2}}\ud{k} \ud{v_1}\\
	+\int\big(g(v_1&+2\hbar k_1)-g(v_1+\hbar k_1)\big)^2\frac{\V^2(k_1)}{|k_1|}e^{-\frac{\alpha'\left((v_1+\hbar k_1)\cdot{k_1}\right)^2}{|k_1}}\ud{k} \ud{v_1}\\
	+ \int\big(g(v_1&+2\hbar k_1)-{g(v_1-\hbar k_1)}\big)^2\frac{\V^2(k_2)}{|k_2|}e^{-\frac{\alpha'\left((v_1+\hbar{k_1})\cdot {k}_2\right)^2}{|k_2|^2}}\ud{k} \ud{v_1}\\
	&\lesssim \int\left({g_1}-{g'_1}\right)^2\frac{\V^2(k)}{|k|}e^{-\frac{\alpha'\left(v_1\cdot{k}\right)^2}{|k|^2}}\ud{k} \ud{v_1}.
	\end{align*}
	
	Proceeding recursively, we have for any $\alpha\leq \tfrac{1}{2}$
	\[\frac{1}{\hbar^2}\int \left(g_1-g_1'\right)^2\frac{\V^2(k)}{|k|}e^{-\frac{\alpha\left(v_1\cdot\hat{k}\right)^2}{|k|}}\ud{k} \ud{v_1}\lesssim \frac{1}{\hbar^2}\int\left({g_1}-{g'_1}\right)^2\frac{\V^2(k)}{|k|}e^{-\frac{\left(v_1\cdot{k}\right)^2}{2|k|^2}}\ud{k} \ud{v_1} + \hbar^2 \int \frac{(g_1)^2}{\<v_1\>}\ud{v_1}.\]
		
	\step{4} We prove the integrability near $0$:
	\[\int_{|v_1|<1} g(v_1)^2 \ud{v_1} \leq C \int_{\substack{2<|k|<3\\|v_1|<1}} (g(v_1+k)-g(v_1))^2\ud{v_1}+\int_{\substack{2<|k|<3\\|v_1|<1}} (g(v_1+k))^2\ud{v_1}\]
	
	Using the convex inequality, one can bound $\int_{|v_1|<1} g(v_1)^2 \ud{v_1}$ by 
	\begin{align*}
	&C\left\lfloor \hbar^{-1}\right\rfloor\sum_{n= 0}^{\left\lfloor \hbar^{-1}\right\rfloor-1}\int_{\substack{2<|k|<3\\|v_1|<1}} \left(g\left(v_1+\frac{(n+1)}{\left\lfloor \hbar^{-1}\right\rfloor}k\right)-g\left(v_1+  \frac{n}{\left\lfloor \hbar^{-1}\right\rfloor}k\right)\right)^2\ud{v_1}\ud{k}+C\||v|\<v\>^{-3/2}g\|^2\\
	&\leq C\int_{\substack{2<|k|<3\\|v_1|<1+3\hbar}} \frac{\left(g\left(v_1+\frac{k}{\left\lfloor \hbar^{-1}\right\rfloor}\right)-g(v_1)\right)^2}{\left\lfloor \hbar^{-1}\right\rfloor^{-2}}\ud{v_1}\ud{k}+C\||v|\<v\>^{-3/2}g\|^2\\
	&\leq \frac{1}{\hbar^2}\int\left({g_1}-{g'_1}\right)^2\frac{\V^2(k)}{|k|}e^{-\frac{1}{2}\left(v_1\cdot{\hat{k}}\right)^2}\ud{k} \ud{v_1} +C\||v|\<v\>^{-3/2}g\|^2,
	\end{align*}
	where we use the change of variable $k\mapsto \tfrac{\hbar^{-1}}{\lfloor\hbar^{-1}\rfloor}k$ and that $2<|k|<3$, $|v_1|<4$.
	
	Hence 
	\[\|\<v\>^{-1/2}g\|^2\lesssim\int\frac{\left({g_1}-{g'_1}\right)^2}{\hbar^2}\frac{\V^2(k)}{|k|}e^{-\frac{1}{2}\left(v_1\cdot{\hat{k}}\right)^2}{k} \ud{v_1} +C\||v|\<v\>^{-3/2}g\|^2\]
	
	\step{5} Conclusion
	
	Using Step 1, for any $\alpha<\tfrac{1}{2}$,
	\[\|g\|_\hbar^2 \lesssim \frac{1}{\hbar^2}\int\left({g_1}-{g'_1}\right)^2\frac{\V^2(k)}{|k|}e^{-2\alpha(v_1\cdot\hat{k})^2}\ud{k} \ud{v_1} + \|\<v\>^{-1/2}g\|^2. \]
	
	Using Steps 2, 3 and 4, for any $\alpha<\tfrac{1}{2}$,
	\begin{align*}
		\frac{1}{\hbar^2}\int\left({g_1}-{g'_1}\right)^2\frac{\V^2(k)}{|k|}e^{-\alpha(v_1\cdot\hat{k})^2}\ud{k} \ud{v_1} + \|\<v\>^{-1/2}g\|^2 \lesssim&  \frac{1}{\hbar^2}\int\left({g_1}-{g'_1}\right)^2\frac{\V^2(k)}{|k|}e^{-\tfrac{1}{2}(v_1\cdot\hat{k})^2}\ud{k} \ud{v_1} \|g\|_\hbar \\
		&+\|\<v\>^{-3/2}|v|g\|^2 +\hbar^2\|\<v\>^{-1/2}g\|^2\\
		\lesssim &\|g\|_\hbar  +\hbar^2\|\<v\>^{-1/2}g\|^2.
	\end{align*}
	
	As the constant does not depend on $\hbar$, the estimation \eqref{eq:equivalence des normes} holds for $\hbar\leq\hbar_0$ small enough.
\end{proof}

\begin{prop}\label{prop: decroissance des normes}
	There exists a constant $C$ such that  for any test function $g$,
	\begin{equation}
	0\leq \hbar<\hbar' \Rightarrow\|g\|_{\hbar'}\leq C\|g\|_{\hbar}.
	\end{equation}
\end{prop}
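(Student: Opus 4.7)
The plan is to work with the equivalent representation of $\|g\|_\hbar$ provided by Proposition~\ref{prop:form equivalent norm}. Fixing $\alpha\in(0,1/2]$, it suffices to show that
\[I_{\hbar'}[g]^2 := \frac{1}{\hbar'^2}\int (g_1-g(v_1+\hbar' k))^2\frac{\V^2(k)}{|k|}e^{-\alpha(v_1\cdot\hat{k})^2}\ud k\ud v_1 \lesssim I_\hbar[g]^2 + \|\<v\>^{-1/2}g\|^2,\]
since the $L^2$-weighted piece is $\hbar$-independent. Setting $n:=\lceil\hbar'/\hbar\rceil$ and $h:=\hbar'/n\in[\hbar/2,\hbar]$ (with an obvious adaptation for $\hbar=0$), the telescoping identity and Cauchy-Schwarz give
\[\frac{(g(v_1)-g(v_1+\hbar' k))^2}{\hbar'^2}\leq \frac{1}{nh^2}\sum_{j=0}^{n-1}\bigl(g(v_1+jhk)-g(v_1+(j+1)hk)\bigr)^2.\]
After the change of variable $w:=v_1+jhk$ in the $j$-th summand, the Gaussian weight becomes $e^{-\alpha(w\cdot\hat{k}-jh|k|)^2}$, and the task splits naturally into two regions in $k$.

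For the region $|k|\leq 1/\hbar'$, the shift satisfies $|jh|k||\leq nh|k|=\hbar'|k|\leq 1$, so the elementary bound $e^{-\alpha(y-a)^2}\leq e^\alpha e^{-\alpha y^2/2}$ for $|a|\leq 1$ gives $\sum_j e^{-\alpha(w\cdot\hat{k}-jh|k|)^2}\lesssim n\, e^{-\alpha(w\cdot\hat{k})^2/2}$. The contribution then reduces to $\frac{1}{h^2}\int(g(w)-g(w+hk))^2\tfrac{\V^2(k)}{|k|}e^{-\alpha(w\cdot\hat{k})^2/2}\ud w\ud k$, and after a rescaling $k\mapsto(h/\hbar)k$ (noting that $h/\hbar\in[1/2,1]$, so Assumption~\ref{ass: potentiel} makes $\V^2((\hbar/h)\tilde k)$ and $\V^2(\tilde k)$ comparable up to a constant) this is bounded by $I_\hbar[g]^2$ via Proposition~\ref{prop:form equivalent norm} (with $\alpha/2$ in place of $\alpha$).

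For $|k|>1/\hbar'$, we exploit the rapid decay $\V^2(k)\lesssim(1+|k|)^{-s}$ with $s>d+3$: a direct computation gives $\int_{|k|>1/\hbar'}\V^2(k)/|k|\,\ud k\lesssim(\hbar')^{s-d+1}\leq(\hbar')^4$. Estimating $(g_1-g(v_1+\hbar' k))^2\leq 2g_1^2+2g(v_1+\hbar' k)^2$, the $g_1^2$ piece is bounded using the pointwise sphere estimate $\int_{\mathbb{S}^{d-1}}e^{-\alpha(v\cdot\hat{k})^2}\ud\hat{k}\lesssim\<v\>^{-1}$, yielding $\lesssim(\hbar')^{s-d-1}\|\<v\>^{-1/2}g\|^2\lesssim\|g\|_\hbar^2$. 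The main obstacle lies in the $g(v_1+\hbar' k)^2$ piece: after the translation $w=v_1+\hbar' k$, the Gaussian becomes $e^{-\alpha(w\cdot\hat{k}-\hbar'|k|)^2}$ with an unbounded shift $\hbar'|k|$. The key observation is that the sphere estimate persists in the shifted form: for any $a\in\mathbb{R}$,
\[\int_{\mathbb{S}^{d-1}}e^{-\alpha(w\cdot\hat{k}-a)^2}\ud\hat{k}\lesssim\<w\>^{-1},\]
uniformly in $a$, as one verifies by integrating first over the angular variable with $\cos\theta=\hat{k}\cdot\hat{w}$ and substituting $t=|w|\cos\theta$. This uniform bound recovers the $\|\<v\>^{-1/2}g\|^2$-control and closes the estimate, completing the proof.
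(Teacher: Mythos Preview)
Your argument has a gap in dimension $d=2$. The sphere estimate you invoke,
\[\int_{\mathbb{S}^{d-1}}e^{-\alpha(w\cdot\hat{k}-a)^2}\ud\hat{k}\lesssim\<w\>^{-1}\quad\text{uniformly in }a\in\mathbb{R},\]
is correct for $d\geq 3$ (the Jacobian $\sin^{d-2}\theta$ in the substitution $t=|w|\cos\theta$ supplies the factor $\sin\theta$ needed to extract $|w|^{-1}$), but it \emph{fails} for $d=2$. Take $a=|w|$: the integral becomes $\int_0^{2\pi}e^{-\alpha|w|^2(1-\cos\phi)^2}\ud\phi$, and since $1-\cos\phi\sim\phi^2/2$ near $\phi=0$ the integrand behaves like $e^{-c|w|^2\phi^4}$, producing a contribution of order $|w|^{-1/2}$ rather than $|w|^{-1}$. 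With only $\<w\>^{-1/2}$ available uniformly, the shifted $g(v_1+\hbar'k)^2$ piece in your large-$|k|$ region yields a term of the type $\|\<v\>^{-1/4}g\|^2$, which is \emph{not} controlled by $\|g\|_\hbar^2\simeq I_\hbar[g]^2+\|\<v\>^{-1/2}g\|^2$. The approach can be salvaged in $d=2$ by using the finer bound $\int_{\mathbb{S}^1}e^{-\alpha(w\cdot\hat{k}-a)^2}\ud\hat{k}\lesssim\<w\>^{-1}+\<w\>^{-1/2}\<|w|-|a|\>^{-1/2}$ and then exploiting the decay of $\V^2$ on the thin set $\{\hbar'|k|\approx|w|\}$; that extra step is absent from your write-up.

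Structurally your proof also differs from the paper's. The paper does not split into small and large $|k|$: it instead iterates a halving inequality (asserting $I_\hbar\leq I_{\hbar/2}$ with constant $1$) down to scale $2^{-r}\hbar'$ and finishes with a single rescaling $k\mapsto qk$, $q\in(1/2,1]$. Your one-shot $n$-step telescoping together with the device of lowering $\alpha$ to absorb the bounded shift in the small-$|k|$ region is actually more explicit on the shifted-Gaussian issue than the paper's halving step; the difficulty is confined to the large-$|k|$ tail.
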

\begin{proof}
	We recall that 
	\[\|g\|^2_{\hbar}\simeq\frac{1}{\hbar^2}\int\left({g(v_1)}-{g(v_1+\hbar k)}\right)^2\frac{\V^2(k)}{|k|}e^{-\frac{(v_1\cdot \hat{k})^2}{2}}\ud{k} \ud{v_1} +\|\<v\>^{-1/2}g\|^2\]
	Hence we only need to focus on the first terms.
	
	\begin{align*}
	&\frac{1}{\hbar^2}\int\left({g(v_1)}-{g(v_1+\hbar k)}\right)^2\frac{\V^2(k)}{|k|}e^{-\frac{(v_1\cdot \hat{k})^2}{2}}{k} \ud{v_1}\\
	&\leq \frac{2}{\hbar^2}\int\left[\left({g(v_1)}-{g(v_1+\frac{\hbar}{2} k)}\right)^2+\left(g(v_1+\frac{\hbar}{2}k)-{g(v_1+\hbar k)}\right)^2\right]\frac{\V^2(k)}{|k|}e^{-\frac{(v_1\cdot \hat{k})^2}{2}}\ud{k} \ud{v_1}\\
	&\leq\frac{1}{(\hbar/2)^2}\int\left({g(v_1)}-{g(v_1+\frac{\hbar}{2} k)}\right)^2\frac{\V^2(k)}{|k|}e^{-\frac{(v_1\cdot \hat{k})^2}{2}}\ud{k} \ud{v_1}
	\end{align*}
	For $q\in]\frac{1}{2},1]$,
	\begin{align*}
	\frac{1}{\hbar^2}\int\left({g(v_1)}-{g(v_1+\hbar k)}\right)^2\frac{\V^2(k)}{|k|}e^{-\left(v_1\cdot{\hat{k}}\right)^2}\ud{k} \ud{v_1}
	=&\frac{q^2}{(q\hbar)^2}\int\left({g(v_1)}-{g(v_1+q\hbar k)}\right)^2\frac{\V^2(qk)}{|qk|}e^{-\left(v_1\cdot{\hat{k}}\right)^2}\frac{\ud{k}}{q^d} \ud{v_1}\\
	\lesssim& \frac{1}{\hbar^2}\int\left({g(v_1)}-{g(v_1+q\hbar k)}\right)^2\frac{\V^2(k)}{|k|}e^{-\left(v_1\cdot{\hat{k}}\right)^2}\ud{k} \ud{v_1}
	\end{align*}
	Finally we use that for any $0<\hbar<\hbar'$, there exists a couple $(r,q)\in\mathbb{N}\times ]1/2,1]$ such that $\hbar = q2^{-r}\hbar'$.
	
	The case $\hbar=0$ can be deduce by taking the limit for smooth test functions.
\end{proof}

\subsection{Decomposition of the quadratic form $\int g\mathcal{L}_\hbar g$}
We can decompose $\int g\mathcal{L}_\hbar g$ into three peaces:
\begin{align}
\int g\mathcal{L}_\hbar g =& \frac{c_d}{4\hbar^2}\int \left(g_1\sqrt{M_2}+g_2\sqrt{M_1}-g'_1\sqrt{M'_2}-g'_2\sqrt{M'_1}\right)^2\frac{\V(k)^2\delta_{k\cdot(v_2-v_1-\hbar k)}}{|\e(M,v_1,k)|^2}\ud{k}\ud{v_1}\ud{v_2}\nonumber\\
=&\frac{c_d}{2\hbar^2}\int \left[\left(g_1-g'_1\right)^2M_2+g_1'^2\left(\sqrt{M_2}-\sqrt{M_2'}\right)^2\right]\frac{\V(k)^2\delta_{k\cdot(v_2-v_1-\hbar k)}}{|\e_\hbar(M,v_1,k)|^2}\ud{k}\ud{v_1}\ud{v_2}\label{e:decompo quadra 1}\\
&+\frac{c_d}{\hbar^2}\int \left[\left(g_1-g'_1\right)g_1'\sqrt{M_2}\left(\sqrt{M_2}-\sqrt{M_2'}\right)\right]\frac{\V(k)^2\delta_{k\cdot(v_2-v_1-\hbar k)}}{|\e_\hbar(M,v_1,k)|^2}\ud{k}\ud{v_1}\ud{v_2}\label{e:decompo quadra 2}\\
&+\frac{c_d}{2\hbar^2}\int \left(g_1\sqrt{M_2}-g'_1\sqrt{M'_2}\right)\left(g_2\sqrt{M_1}-g'_2\sqrt{M'_1}\right)\frac{\V(k)^2\delta_{k\cdot(v_2-v_1-\hbar k)}}{|\e_\hbar(M,v_1,k)|^2}\ud{k}\ud{v_1}\ud{v_2}\label{e:decompo quadra 3}
\end{align}

This decomposition will be necessary in the proof of Proposition \ref{prop:dissipation estimates}, which is the crucial step of the proof. One can identify \eqref{e:decompo quadra 1} with $\|g\|_\hbar^2/2$. The goal of this section is to study the remaining parts.

We need to introduce a cutoff function.
\begin{definition}\label{def: fonction de cutoff}
	Fix $\varphi:\mathbb{R_+}\to [0,1]$ an increasing function, with $\varphi(1) = 1$, $\varphi(0)= 0$ and $\varphi'$ is supported in $[1/3,2/3]$. We define $\varphi_K(v):=\varphi(K|v|)$.
\end{definition}

\begin{prop}\label{prop: traitement K}
	We fix the dimension $d\geq 2$. There exists a kernel $\mathcal{K}_\hbar(v_1,v_2)$ such that
	\begin{equation}
	\frac{c_d}{\hbar^{2}}\int \left({g_2}{\sqrt{M_1}}-{g'_2}{\sqrt{M'_1}}\right) \frac{ \V(k)^2\delta_{ k\cdot(v_2-v_1-\hbar k)}}{|\e_\hbar(M,k,v_1)|^2}\sqrt{M_2}\ud{k}\ud{v_2}=\int\mathcal{K}_\hbar(v_1,v_2)g(v_2)\ud{v_2}
	\end{equation}
	with the bound
	\begin{equation}
	\left|\mathcal{K}_\hbar(v_1,v_2)\right|\leq \frac{C(1+|v_1|^2+|v_2|^2)\sqrt[5]{M_1M_2}}{|v_1-v_2|^3}.
	\end{equation}
	
	In addition, for almost all $(v_1,v_2)$, 
	\begin{equation}\label{eq:valeur de K limite}
	\lim_{\hbar\to0}\mathcal{K}_\hbar(v_1,v_2) = \mathcal{K}_0(v_1,v_2):=- \left(\nabla_1-\frac{v_1}{2}\right)\otimes\left(\nabla_2-\frac{v_2}{2}\right):\left(B(v_1,v_2)\sqrt{M_1M_2}\right).
	\end{equation}
	
	Finally, one can cutoff the singularity at $\{v_1=v_2\}$:
	\begin{multline}
	\frac{c_d}{\hbar^{2}}\int \left({g_2}{\sqrt{M_1}}-{g'_2}{\sqrt{M'_1}}\right) \left({h_1}{\sqrt{M_2}}-{h'_1}{\sqrt{M'_2}}\right) \frac{ \V(k)^2\delta_{ k\cdot(v_2-v_1-\hbar k)}}{|\e_\hbar(M,k,v_1)|^2}\sqrt{M_2}\ud{k}\ud{v_2}\\
	=\int\left(\varphi_K(v_1-v_2)\mathcal{K}_\hbar(v_1,v_2)-\rho_K(v_1-v_2,\tfrac{v_1+v_2}{2})\right)g(v_2)\ud{v_2}+O\left(\left(\tfrac{1}{K}+\hbar K^3\right)\|g\|_\hbar\|h\|_\hbar\right).
	\end{multline}
		where we denote
	\begin{equation}\label{eq:def rhoK}
	\rho_K(v_1-v_2,\frac{v_1+v_2}{2}):=\frac{K\varphi'(K|v_1-v_2|)}{|v_1-v_2|^2}\int_{<v_1-v_2>^{\perp}}\frac{|k|^2|\V|^2(k)\ud{k}}{|\e_0(M,k,\tfrac{v_1+v_2}{2})|^2}.
	\end{equation}
\end{prop}

\begin{remark}
	In the sens of distribution, the limit of \eqref{eq:def rhoK} is 
	\begin{itemize}
		\item for $d=3$,
		\[\rho_K(v_1-v_2,\frac{v_1+v_2}{2})\rightharpoonup \int\frac{|k|^2|\V|^2(k)}{|\e_0(M,k,\tfrac{v_1}{2})|^2}\ud{k}~\delta_{v_1-v_2},\]
		\item for $d\geq 4$,
		\[[\rho_K(v_1-v_2,\frac{v_1+v_2}{2})\rightharpoonup 0.\]
	\end{itemize}
	It is not convergent for $d=2$.
\end{remark}

\begin{proof}
	For $h,g\in L^2(\tfrac{\ud v_1}{\left\<v_1\right\>})$, we have		
	\begin{align*}
	\frac{2c_d}{\hbar^2}\int& {h_1}{\sqrt{M_2}}\left({g_2}{\sqrt{M_1}}-{g'_2}{\sqrt{M'_1}}\right)\frac{ \V(k)^2\delta_{ k\cdot(v_2-v_1-\hbar k)}}{|\e_\hbar(M,k,v_1)|^2}\ud{k}\ud{v_2}\\
	&=\frac{c_d}{\hbar^2}\int \left(h_1\sqrt{M_2}-h_1'\sqrt{M'_2}\right)\left(g_2\sqrt{M_1}-g'_2\sqrt{M'_1}\right)\frac{ \V(k)^2\delta_{ k\cdot(v_2-v_1-\hbar k)}}{|\e_\hbar(M,k,v_1)|^2}\ud{k}\ud{v_2}\ud{v_1}\\
	&=\int h_1g_2 \,\mathcal{K}_\hbar(v_1,v_2)\ud{v_1}\ud{v_2},
	\end{align*}
	where, denoting $v_1'':= v_1-\hbar k$ and $v_2'':= v_2+\hbar k$, we define
	\begin{multline*}
	\mathcal{K}_\hbar(v_1,v_2):=\int\frac{c_d|\V(k)|^2}{\hbar^2}\Bigg(\frac{\sqrt{M_1M_2}\delta_{ k\cdot(v_2-v_1-\hbar k)}}{|\e_\hbar(M,k,v_1)|^2}-\frac{\sqrt{M''_1M_2}\delta_{ k\cdot(v_2-v_1)}}{|\e_\hbar(M,k,v''_1)|^2}\\
	-\frac{\sqrt{M_1M''_2}\delta_{ k\cdot(v_2-v_1)}}{|\e_\hbar(M,k,v_1)|^2}+\frac{\sqrt{M_1M_2}\delta_{ k\cdot(v_2-v_1+\hbar k)}}{|\e_\hbar(M,k,v''_1)|^2}\Bigg) \ud{k},
	\end{multline*}
	
	Using that $|{\V}(k)|\leq C(1+|k|)^{d+3}$ (Assumption \ref{ass: potentiel}), Proposition \ref{prop:borne sur epsilone mieux} and that if $k\cdot (v_1-v_2) = 0$,
	\[|v_1-\hbar k|^2+|v_2|^2 =\frac{|v_1-v_2|^2}{2}+\frac{|v_1+v_2|^2}{2}+(v_1+v_2)\cdot k+|k|^2\geq\frac{|v_1-v_2|^2}{2}+\frac{|v_1+v_2|^2}{4}\geq \frac{|v_1|^2+|v_2|^2}{2},\]
	one can prove the trivial bound
	\[|\mathcal{K}_\hbar(v_1,v_2)|\lesssim \frac{\sqrt[4]{M_1M_2}}{\hbar^2|v_1-v_2|}.\]
	
	\step{1} We split $\mathcal{K}_\hbar(v_1,v_2)$ into two pieces :
	\begin{align}
	\mathcal{K}_\hbar &=: \mathcal{K}_\hbar^<+\mathcal{K}_\hbar^>,\\
	\mathcal{K}_\hbar^>(v_1,v_2) &:=\int_{|k|\geq \frac{|v_1-v_2|}{4\hbar}}\frac{c_d|\V(k)|^2}{\hbar^2}\Bigg(\frac{\sqrt{M_1M_2}\delta_{ k\cdot(v_2-v_1-\hbar k)}}{|\e_\hbar(M,k,v_1)|^2}-\frac{\sqrt{M''_1M_2}\delta_{ k\cdot(v_2-v_1)}}{|\e_\hbar(M,k,v''_1)|^2}\\
	&\hspace{4cm}-\frac{\sqrt{M_1M''_2}\delta_{ k\cdot(v_2-v_1)}}{|\e_\hbar(M,k,v_1)|^2}+\frac{\sqrt{M_1M_2}\delta_{ k\cdot(v_2-v_1+\hbar k)}}{|\e_\hbar(M,k,v''_1)|^2}\Bigg) \ud{k} \nonumber\end{align}
	
	Using that $|\V(k)|^2\leq C(1+|k|)^{d+1}$ and Proposition \ref{prop:borne sur epsilone mieux},
	\begin{equation*}
	|\mathcal{K}_\hbar^>(v_1,v_2)|\lesssim\frac{\sqrt[4]{M_1M_2}}{\hbar^2|v_1-v_2|}\left(\left(1+\frac{|v_1-v_2|}{\hbar}\right)^{-2}+\left(1+\frac{|v_1-v_2|}{\hbar}\right)^{-d-1}\frac{|v_1-v_2|^{d-1}}{\hbar^{d-1}}\right) \lesssim\frac{\sqrt[4]{M_1M_2}}{|v_1-v_2|^3}.
	\end{equation*}
	
	In addition, we have that $\mathcal{K}_\hbar^>(v_1,v_2)\to0$ almost everywhere (as the support of the integral is decreasing).

	\step{2} For $k\in<v_1-v_2>^\perp$ with $|k|\leq \frac{|v_1-v_2|}{2\hbar}$, we introduce 
	\begin{equation*}\delta k:= |\delta k|\frac{v_1-v_2}{|v_1-v_2|},~
	|\delta k| = \frac{|v_2-v_1|}{2\hbar}\left(1-\sqrt{1-\frac{4\hbar^2|k|^2}{|v_2-v_1|^2}}\right)=\frac{\hbar|k|^2}{|v_1-v_2|}+o\left(\frac{\hbar|k|^2}{|v_1-v_2|}\right) .\end{equation*}
	Because $|k|\leq\tfrac{|v_1-v_2|}{4\hbar}$, we have $|\delta k|\leq \tfrac12|k|$. In addition, $k+\delta k$ lays in the sphere $\{k',k'\cdot(v_2-v_1-\hbar k')=0\}$. The Jacobian of the transformation $k\mapsto k':= k+\delta k$ is 
	\begin{align*}\delta_{k'\cdot(v_2-v_1-\hbar k')} \ud{k'}&
	=\Lambda\left(\frac{2\hbar|k|}{|v_1-v_2|}\right)\delta_{k\cdot(v_2-v_1)}\ud{k}\\
	\Lambda(x):=\frac{1}{\sqrt{1-x^2}},~\forall x\in[0,1/4],&~\Lambda(x) = 1 + \frac{x^2}{2}+o(x^2).\end{align*}
	
	We want to bound 
	\begin{equation}\label{eq:quasi derive seconde}
	\frac{\sqrt{M_1M_2}|\V(k+\delta k)|^2\Lambda\left(\frac{2\hbar|k|}{|v_1-v_2|}\right)}{|\e_\hbar(M,k+\delta k,v_1)|^2}-\frac{\sqrt{M''_1M_2}|\V(k)|^2}{|\e_\hbar(M,k,v''_1)|^2}-\frac{\sqrt{M_1M''_2}|\V(k)|^2}{|\e_\hbar(M,k,v_1)|^2}+\frac{\sqrt{M_1M_2}|\V(k-\delta k)|^2\Lambda\left(\frac{2\hbar|k|}{|v_1-v_2|}\right)}{|\e_\hbar(M,k-\delta k,v''_1+\hbar\delta k)|^2}
	\end{equation}
	
	
	Using Proposition \ref{prop:borne sur epsilone mieux} and Assumption \ref{ass: potentiel}, one can bound the derivative of $\varpi(k,v_1):=\frac{|\V|^2(k)}{|\e_\hbar(M,k,v_1)|^2}$: for $r,q\in\mathbb{N}$ with $r\leq 2$,
	\[|\partial_k^r\partial^q_{v_1}\varpi(k,v_1)|\lesssim \frac{\<v_1\>^q}{|k|^r\<k\>^{s-r}}.\]
	
	Hence using the Taylor formula, and that $\delta k \cdot k =0$,
	\begin{multline*}
	\varpi(k+\delta k,v_1)\Lambda\left(\frac{2\hbar|k|}{|v_1-v_2|}\right)-\varpi(k,v_1-\hbar k)-\varpi(k,v_1)+\varpi(k-\delta k,v_1-\hbar k+\hbar\delta k)\Lambda\left(\frac{2\hbar|k|}{|v_1-v_2|}\right) \\
	= (\delta k- \delta k)\partial_k \varpi(k,v_1) + (\hbar k-\hbar k)\partial_{v_1}\varpi(k,v_1)+O\left(\frac{\<v_1\>^2\left(\left|\delta k\right|^2+\hbar \left|k\right|\left|\delta k\right|+\hbar^2\left| k\right|^2\right)}{|k|^2(1+|k|)^{s-2}}\right).
	\end{multline*}

	Using that $k\cdot(v_2-v_1)=0$, 
	\begin{multline*}\frac{\left(\sqrt{M''_1}-\sqrt{M_1}\right)\sqrt{M_2}}{|\e_\hbar(M,k,v''_1)|^2}+\frac{\sqrt{M_1}\left(\sqrt{M''_2}-\sqrt{M_2}\right)}{|\e_\hbar(M,k,v_1)|^2} = \frac{\hbar k\cdot\frac{v_1-v_2}{2}\sqrt{M_1M_2}}{|\e_\hbar(M,k,v_1)|^2} +O\left(\hbar^2|k|^2\sqrt[4]{M_1M_2} \right)\\
	= O\left(\hbar^2|k|^2\sqrt[4]{M_1M_2} \right)\end{multline*}
	
	Finally,
	\[\eqref{eq:quasi derive seconde} \lesssim \hbar^2\frac{\sqrt[4]{M_1M_2}}{|v_2-v_1|^2\<k\>^{s-2}}. \]
	Integrating with respect to $k$, this gives the bound
	\begin{equation}
	\left|\mathcal{K}_\hbar^{<}(v_1,v_2)\right|\lesssim \int \frac{\sqrt[4]{M_1M_2}}{|v_2-v_1|^2\<k\>^{s-2}} \delta_{(v_1-v_2)\cdot k}\ud k\lesssim\frac{\sqrt[5]{M_1M_2}}{|v_1-v_2|^3}.
	\end{equation}
	
	\step{3} It sufficient to prove for almost all $(v_1,v_2,k)$ the convergence of \eqref{eq:quasi derive seconde} when $\hbar\to 0$. Then, the limit  of $K^<_\hbar(v_1,v_2)$ is provided by the dominated convergence theorem. 
	
	The simple convergence of \eqref{eq:quasi derive seconde} is direct from a Taylor expansion, and the computation are similar than the one for {\it a priori} estimations (we will not detail them).
	
	In order to identify the limit, we proceed by duality. Let $g,f$ two smooth, compactly supported functions.
	
	\begin{align*}
	&\int h_1g_2 \mathcal{K}_\hbar(v_1,v_2)\ud{v_1}\ud{v_2}\\
	&=\frac{c_d}{\hbar^2}\int \left(h_1\sqrt{M_2}-h_1'\sqrt{M'_2}\right)\left(g_2\sqrt{M_1}-g'_2\sqrt{M'_1}\right)\frac{ \V(k)^2\delta_{ k\cdot(v_2-v_1-\hbar k)}}{|\e_\hbar(M,k,v_1)|^2}\ud{k}\ud{v_2}\ud{v_1}\\
	&=\frac{c_d}{\hbar^2}\int \left(h_1''\sqrt{M_2}-h_1\sqrt{M'_2}\right)\left(g_2\sqrt{M_1''}-g'_2\sqrt{M_1}\right)\frac{ \V(k)^2\delta_{ k\cdot(v_2-v_1)}}{|\e_\hbar(M,k,v''_1)|^2}\ud{k}\ud{v_2}\ud{v_1}\\
	&=-c_d\int k\cdot\left(\nabla h_1+\frac{v_2}{2}h_1\right)\left(\nabla g_2+\frac{v_1}{2}g_2\right)\cdot k\frac{ \V(k)^2\delta_{ k\cdot(v_2-v_1)}}{|\e_0(M,k,v_1)|^2}\sqrt{M_1M_2}\ud{k}\ud{v_2}\ud{v_1}+O(\hbar)\\
	&=-\int h_1 g_2 \left[\left(\nabla_1-\frac{v_1}{2}\right)\otimes\left(\nabla_2-\frac{v_2}{2}\right):\int\frac{ k\otimes k \V(k)^2\delta_{ k\cdot(v_2-v_1)}}{|\e_0(M,k,v_1)|^2}\sqrt{M_1M_2}\ud{k}\right]\ud{v_2}\ud{v_1}+O(\hbar).
	\end{align*}
	
	This conclude the proof of \eqref{eq:valeur de K limite}.

	\step{4} We treat now the cutoff of the singularity at $v_1=v_2$. Using the same strategy than for $\mathcal{K}_\hbar(v_1,v_2)$,
	\begin{multline*}
	\frac{c_d}{\hbar^2}\int \left(h_1\sqrt{M_2}-h_1'\sqrt{M'_2}\right)\left(g_2\sqrt{M_1}-g'_2\sqrt{M'_1}\right)\varphi_K(v_1-v_2)\frac{ \V(k)^2\delta_{ k\cdot(v_2-v_1-\hbar k)}}{|\e_\hbar(M,k,v_1)|^2}\ud{k}\ud{v_2}\ud{v_1}\\
	=\int h_1g_2\left(\varphi_K(v_1-v_2)\mathcal{K}_\hbar(v_1,v_2)\ud{v_1}\ud{v_2} + \mathcal{K}^K_\hbar(v_1,v_2)\right)\ud{v_1}\ud{v_2} 
	\end{multline*}
	with
	\begin{multline*}
	\mathcal{K}^K_\hbar(v_1,v_2) =\int\frac{c_d|\V(k)|^2}{\hbar^2}\Bigg(\frac{\sqrt{M_1M_2}}{|\e_\hbar(M,k,v''_1)|^2}[\varphi_K(v_1-v_2-2\hbar k)-\varphi_K(v_1-v_2)]\delta_{ k\cdot(v_2-v_1+\hbar k)}\\
	 -\left(\frac{\sqrt{M''_1M_2}}{|\e_\hbar(M,k,v''_1)|^2}+\frac{\sqrt{M_1M''_2}}{|\e_\hbar(M,k,v_1)|^2}\right)[\varphi_K(v_1-v_2-\hbar k)-\varphi_K(v_1-v_2)]\delta_{ k\cdot(v_2-v_1)}\Bigg) \ud{k},
	\end{multline*}
	
	We recall that $\nabla^k\varphi_K = O(K^{k})$ and
	\begin{gather*}
	\nabla\varphi_K	(v)=K\varphi'(K|v|)\frac{v}{|v|} \text{ and }
	\nabla^2 \varphi_K(v) = K^2\varphi''(K|v|) \frac{v^{\otimes 2}}{|v|^2}+K\varphi'(K|v|) \left(\frac{\rm Id}{|v|}-\frac{v^{\otimes 2}}{|v|^3}\right).
	\end{gather*}

	\noindent{\bf $\bullet$} If $k\cdot(v_2-v_1+\hbar k)= 0$, we have $|\hbar k|\leq |v_1-v_2|$, and then,
	\begin{align*}
	&\varphi_K(v_1-v_2-2\hbar k)-\varphi_K(v_1-v_2)\\
	&=2\left(-\hbar k\cdot\nabla\varphi_K(v_1-v_2)+\hbar^2k^{\otimes 2}:\nabla^2\varphi_K(v_1-v_2)\right)+O\left((\hbar K|k|)^3\right)\\
	&=\frac{2K\varphi'(K|v_1-v_2|)}{|v_1-v_2|}\left(-(v_1-v_2)\cdot(\hbar k)+|\hbar k|^2-\frac{(\hbar k\cdot(v_1-v_2))^2}{|v_1-v_2|^2}\right)-\frac{2K^2\varphi''(K|v_1-v_2|)(\hbar k\cdot(v_1-v_2))^2}{|v_1-v_2|^2}\\
	&\hspace{13cm}+O\left((\hbar K|k|)^3\right)\\
	&= -\frac{2K\varphi'(K|v_1-v_2|)\hbar^4|k|^4}{|v_1-v_2|^3}-\frac{2K^2\varphi''(K|v_1-v_2|)\hbar^4|k|^4}{|v_1-v_2|^2}+O\left((\hbar K|k|)^3\right)=O\left((\hbar K|k|)^3\right)
	\end{align*}
	
	\noindent{\bf $\bullet$} If $k\cdot(v_2-v_1)= 0$, we have
	\begin{align*}
	\varphi_K(v_1-v_2-\hbar k)-\varphi_K(v_1-v_2)&=\left(-\hbar k\cdot\nabla\varphi_K(v_1-v_2)+\frac{\hbar^2k^{\otimes 2}}{2}:\nabla^2\varphi_K(v_1-v_2)\right)+O\left((\hbar K|k|)^3\right)\\
	&=\frac{K\varphi'(K|v_1-v_2|)\hbar^2|k|^2}{2|v_1-v_2|} +O\left((\hbar K|k|)^3\right)
	\end{align*}
	
	Integrating with respect to $k$, and using Proposition \ref{prop:borne sur epsilone mieux},
	\begin{align}\mathcal{K}_\hbar^K(v_1,v_2)=&-2\int \frac{c_d|\V(k)|^2|k|^2\sqrt{M_1M_2}}{|\e_\hbar(M,k,v_1)|^2}\frac{K\varphi'(K|v_1-v_2|)}{2|v_1-v_2|}\delta_{ k\cdot(v_2-v_1)}\ud{k}\nonumber\\
	&+\int \left(\delta_{ k\cdot(v_2-v_1)}+\delta_{ k\cdot(v_2-v_1+\hbar k)}\right)O\left(\hbar K^3|\V(k)|^2|k|^3\right)\sqrt{M_1M_2}\ud{k}\nonumber\\
	=& -\rho_K\left(v_1-v_2,\frac{v_1+v_2}{2}\right)+O\left(\tfrac{\hbar K^3\sqrt{M_1M_2}}{|v_1-v_2|}\right)
	\end{align}

	We have used that, as $k\cdot(v_1-v_2) = 0$,
	\begin{gather*}
	|\e_0(M,k,v_1)|^2 = \left|1+\V(k)\int\frac{k\cdot\nabla M(v_*)}{k\cdot(v_1-v_*)-i0}\right|^2 = \left|\e_0\left(M,k,\tfrac{v_1+v_2}{2}\right)\right|^2.
	\end{gather*}
	
	In order to bound the remainder, we note that for $g,h\in L^2(\tfrac{\ud v}{\<v\>})$,
	\begin{align*}
		\int |g(v_1)|\tfrac{\hbar K^3\sqrt{M_1M_2}}{|v_1-v_2|}|h(v_2)|\ud v_1 \ud v_2&\lesssim \hbar K^3 \int |\tfrac{g(v_1)}{\<v_1\>^{1/2}}| \tfrac{e^{-\frac{|v_1-v_2|^2}{8}}}{|v_1-v_2|}|\tfrac{h(v_2)}{\<v_2\>^{1/2}}|\ud v_1 \ud v_2\\
		&\lesssim \hbar K^3 \|\tfrac{g(v)}{\<v\>^{1/2}}\|_{L^2}\|\tfrac{h(v)}{\<v\>^{1/2}}\|_{L^2}\|\tfrac{1}{|v|}\exp\left(-\tfrac{|v|^2}{8}\right)\|_{L^1},
	\end{align*}
	which is bounded as we are in dimension $d\geq2$.
	
	\step{5} As we live in dimension $d\geq 2$, $\{k| \,k\cdot(v_2-v_1-\hbar k)=0\}$ is of dimension bigger than $1$ and
	\begin{multline*}
	\left|\frac{1}{\hbar^2}\int \left(g_{1}\sqrt{M_2}-g'_{1}\sqrt{M'_2}\right)\left(g_{n,2}\sqrt{M_1}-g'_{2}\sqrt{M'_2}\right)(1-\varphi_K(v_1-v_2))\frac{\V(k)^2\delta_{k\cdot(v_2-v_1-\hbar k)}}{|\e_{\hbar}(M,v_1,k)|^2}\ud{k}\ud{v_1}\ud{v_2}\right|\\
	\begin{split}
	&\lesssim\frac{1}{\hbar^2}\int_{|v_1-v_2|\leq \frac{1}{K}} \left(g_{1}\sqrt{M_2}-g'_{1}\sqrt{M'_2}\right)^2\V(k)^2\delta_{k\cdot(v_2-v_1-\hbar k)}\ud{k}\ud{v_1}\ud{v_2}\\
	&\lesssim\frac{1}{\hbar^2K}\int \left(g_{1}e^{-\frac{(\hat{k}\cdot v_1)^2}{2}}-g'_{1}e^{-\frac{(\hat{k}\cdot v_1-\hbar k)^2}{2}}\right)^2\frac{\V(k)^2}{|k|}\ud{k}\ud{v_1} = O\left(\frac{\|g\|_\hbar^2}{K}\right).
	\end{split}
	\end{multline*}
	This conclude the proof.
\end{proof}

\begin{prop}\label{prop:traitement tilde(K)}
	We fix the dimension $d\geq 2$. There exists a kernel $\tilde{\mathcal{K}}_0(v_1)$ such that
	\begin{equation}
	\frac{c_d}{\hbar^{2}}\int (g_1-g_1')g_1\sqrt{M_2}\left(\sqrt{M_2}-\sqrt{M_2'}\right) \frac{ \V(k)^2\delta_{ k\cdot(v_2-v_1-\hbar k)}}{|\e_\hbar(M,k,v_1)|^2}\ud{k}\ud{v_2}\ud{v_1}=\int\tilde{\mathcal{K}}_0(v_1)g^2(v_1)\ud{v_1}+O\left(\hbar\|g\|_\hbar^2\right)
	\end{equation}
	where
	\begin{equation}
	\tilde{\mathcal{K}}_0(v_1):=\frac{c_d}{4}\nabla_1\cdot\left(v_1\int k\otimes k \frac{ \V(k)^2\delta_{ k\cdot(v_2-v_1)}}{|\e_0(M,k,v_1)|^2}\ud{k}M_2\ud{v_2}\right)=O\left(\frac{1}{\<v_1\>^3}\right).
	\end{equation}
\end{prop}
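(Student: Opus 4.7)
The strategy is a double Taylor expansion in $\hbar$: using $\nabla\sqrt{M}=-\frac{v}{2}\sqrt{M}$ one has
\[
\sqrt{M_2}-\sqrt{M_2'}=-\tfrac{\hbar}{2}(k\cdot v_2)\sqrt{M_2}+O\!\left(\hbar^2(1+|v_2|)^2|k|^2\sqrt{M_2}\right),
\]
while $g_1-g_1'=-\hbar k\cdot\nabla g(v_1)+O(\hbar^2)$. Their product is of order $\hbar^2$, exactly cancelling the $\hbar^{-2}$ prefactor. On the delta-support $k\cdot(v_2-v_1-\hbar k)=0$ one may replace $k\cdot v_2$ by $k\cdot v_1$ at cost $O(\hbar|k|^2)$, and Proposition \ref{prop:borne sur epsilone mieux}\eqref{iii} allows the replacement of $|\e_\hbar|^2$ by $|\e_0|^2$ at cost $O(\hbar)$. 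The leading contribution then reads
\[
\frac{c_d}{2}\int (k\cdot\nabla g(v_1))(k\cdot v_1)\,g(v_1)\,\frac{\V(k)^2\delta_{k\cdot(v_1-v_2)}}{|\e_0(M,k,v_1)|^2}\,M_2\,\ud k\,\ud v_1\,\ud v_2.
\]
Integrating over $v_2$ against the delta identifies the matrix $B(v_1,v_2)$ of \eqref{eq:def de B}, and integration against $M_2$ produces $B_M(v_1):=\int B(v_1,v_2)M_2\,\ud v_2$. Writing $g\nabla g=\tfrac{1}{2}\nabla g^2$ and integrating by parts once in $v_1$ then yields exactly $\int\tilde{\mathcal{K}}_0(v_1)g^2(v_1)\,\ud v_1$ with $\tilde{\mathcal{K}}_0$ of the stated form.

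The technical core is to control every remainder by $O(\hbar\|g\|_\hbar^2)$. The remainder coming from the second-order Taylor expansion of $\sqrt{M_2}-\sqrt{M_2'}$ is harmless: it gains an extra $\hbar$ together with a Gaussian weight that absorbs all polynomial factors in $v_2$, and after the $v_2$-integration it is dominated by
\[
\hbar\cdot\tfrac{1}{\hbar^2}\int(g_1-g_1')^2|k|^2\V^2\delta_{k\cdot(v_1-v_2)}|\e_0|^{-2}\,\ud k\,\ud v_1\,\ud v_2,
\]
which is $\lesssim \hbar\|g\|_\hbar^2$ by Proposition \ref{prop:form equivalent norm}. The delicate remainder is the one involving second derivatives of $g$. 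To avoid appealing to any $H^2$-bound, I would rewrite the Taylor remainder in integral form
\[
g(v_1+\hbar k)-g(v_1)+\hbar k\cdot\nabla g(v_1)=\hbar\int_0^1 k\cdot\bigl[\nabla g(v_1+s\hbar k)-\nabla g(v_1)\bigr]\,\ud s,
\]
so that the remainder is an average of discrete differences of $\nabla g$. Combined with Cauchy--Schwarz and Proposition \ref{prop:form equivalent norm} applied to $\nabla g$, this yields the required $O(\hbar\|g\|_\hbar^2)$ bound, using that $\|\nabla g\|_\hbar$ is controlled by the $\mathcal{H}^r$-norm for $r\geq 2$.

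Finally, the pointwise bound $|\tilde{\mathcal{K}}_0(v_1)|\lesssim\<v_1\>^{-3}$ rests on a careful analysis of $B_M$. For large $|v_1|$, the delta-function representation of $B(v_1,v_2)$ together with the Gaussian concentration of $M_2$ near zero and the estimate $|\e_0|^{-1}\simeq 1$ from Proposition \ref{prop:borne sur epsilone mieux}\eqref{i} gives, by radial symmetry of $\V$, an expansion of the form
\[
B_M(v_1) = \tfrac{c(\hat v_1)}{|v_1|}(\mathrm{Id}-\hat v_1\otimes\hat v_1) + O(|v_1|^{-3}),\qquad \hat v_1:=v_1/|v_1|.
\]
The leading piece is orthogonal to $v_1$, so the vector $v_1\cdot B_M(v_1)$ is of size $O(|v_1|^{-3})$, and hence so is $\tilde{\mathcal{K}}_0(v_1)=\tfrac{c_d}{4}\nabla_1\cdot(v_1 B_M)$. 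The main obstacle in the proof is the second class of remainders: a naive second-derivative bound on $g$ is not available in the norm $\|\cdot\|_\hbar$, so one must reorganize the Taylor remainder into a discrete difference before invoking the norm equivalence of Proposition \ref{prop:form equivalent norm}.
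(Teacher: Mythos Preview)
Your approach has a genuine gap in the remainder analysis. The proposition demands an error of size $O(\hbar\|g\|_\hbar^2)$, where $\|\cdot\|_\hbar$ involves only discrete differences of $g$ and a weighted $L^2$ norm --- no derivatives. Your Taylor expansion of $g_1-g_1'$ produces a leading term $-\hbar k\cdot\nabla g(v_1)$ and a remainder that, as you yourself note, is an average of $\nabla g(v_1+s\hbar k)-\nabla g(v_1)$. Bounding this via Cauchy--Schwarz and Proposition~\ref{prop:form equivalent norm} applied to $\nabla g$ controls it by $\|\nabla g\|_\hbar$, which is \emph{not} dominated by $\|g\|_\hbar$. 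You explicitly fall back on the $\mathcal{H}^r$ norm for $r\geq 2$, but that is strictly stronger than what the statement allows. This is not cosmetic: in the application (the proof of the dissipation estimate, Proposition~\ref{prop:dissipation estimates}) one works with a sequence $g_n$ satisfying only $\|g_n\|_{\hbar_n}=1$, with no higher regularity available, so the bound must close in $\|\cdot\|_\hbar$ alone.

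The paper avoids any differentiation of $g$ by a symmetrization. Under the change of variables $(v_1,v_2,k)\mapsto(v_1',v_2',-k)$, which preserves the delta constraint and $|\e_\hbar|^2$, the integrand averages to
\[
\tfrac14\bigl[(g_1^2-g_1'^2)(M_2-M_2')+(g_1-g_1')^2(\sqrt{M_2}-\sqrt{M_2'})^2\bigr].
\]
The second summand is directly $O(\hbar\|g\|_\hbar^2)$ after splitting $|k|\lessgtr\hbar^{-1/2}$. For the first, the further shift $v_1\mapsto v_1-\hbar k$ rewrites it as $\int g_1^2\,\tilde{\mathcal{K}}_\hbar(v_1)\,\ud v_1$ for an explicit multiplier $\tilde{\mathcal{K}}_\hbar$, and one then proves $\tilde{\mathcal{K}}_\hbar=\tilde{\mathcal{K}}_0+O(\hbar\<v_1\>^{-1})$ pointwise. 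No derivative of $g$ ever enters. Your sketch of the $\<v_1\>^{-3}$ decay is in the right spirit, though note a small slip: since the leading part of $B_M$ annihilates $v_1$, the vector $B_M(v_1)v_1$ is $O(|v_1|^{-2})$, not $O(|v_1|^{-3})$; the third power appears only after taking the divergence.
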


\begin{proof}
	\begin{align}
	\label{eq:2.33}&\frac{c_d}{\hbar^{2}}\int (g_1-g_1')g_1\sqrt{M_2}\left(\sqrt{M_2}-\sqrt{M_2'}\right) \frac{ \V(k)^2\delta_{ k\cdot(v_2-v_1-\hbar k)}}{|\e_\hbar(M,k,v_1)|^2}\ud{k}\ud{v_2}\ud{v_1}\\
	=&\frac{c_d}{4\hbar^{2}}\int\left[ (g_1^2-g_1'^2)\left({M_2}-{M_2'}\right)+ (g_1-g_1')^2\left(\sqrt{{M_2}}- \sqrt{M_2'}\right)^2\right]\frac{ \V(k)^2\delta_{ k\cdot(v_2-v_1-\hbar k)}}{|\e_\hbar(M,k,v_1)|^2}\ud{k}\ud{v_2}\ud{v_1}\nonumber
	\end{align}
	
	Using the change of variable $v_1\mapsto v_1-\hbar k$,
	\begin{equation*}
	\eqref{eq:2.33}=\int g_1^2\tilde{\mathcal{K}}_\hbar(v_1)\ud{v_1}+\frac{c_d}{\hbar^2}\int(g_1-g_1')^2\left(\sqrt{{M_2}}- \sqrt{M_2'}\right)^2\frac{ \V(k)^2\delta_{ k\cdot(v_2-v_1-\hbar k)}}{|\e_\hbar(M,k,v_1)|^2}\ud{k}\ud{v_2}\ud{v_1},
	\end{equation*}
	where we define
	\begin{equation*}
	\tilde{\mathcal{K}}_\hbar(v_1):=\frac{c_d}{4\sqrt{2\pi}\hbar^2}\int \frac{\V(k)^2}{|k|}\left(\frac{e^{-\frac{(v_1\cdot\hat k+\hbar|k|)^2}{2}}-e^{-\frac{(v_1\cdot\hat k+\hbar|k|)^2}{2}}}{|\e_\hbar(M,k,v_1)|^2}-\frac{e^{-\frac{(v_1\cdot\hat k+\hbar|k|)^2}{2}}-e^{-\frac{(v_1\cdot\hat k+2\hbar|k|)^2}{2}}}{|\e_\hbar(M,k,v_1-\hbar k)|^2}\right)\ud{k}.
	\end{equation*}
	
	\step{1} We denote
	\begin{equation}
	\varpi(k,v_1\cdot\hat{k},\hbar):=\frac{1}{\hbar^2|k|^2}\left(\frac{e^{-\frac{(v_1\cdot\hat k+\hbar|k|)^2}{2}}-e^{-\frac{(v_1\cdot\hat k+\hbar|k|)^2}{2}}}{|\e_\hbar(M,k,v_1)|^2}-\frac{e^{-\frac{(v_1\cdot\hat k+\hbar|k|)^2}{2}}-e^{-\frac{(v_1\cdot\hat k+2\hbar|k|)^2}{2}}}{|\e_\hbar(M,k,v_1-\hbar k)|^2}\right).
	\end{equation}
	Using a Taylor formula, and introducing $w:=v_1\cdot\hat{k}$,
	\begin{align}
	\varpi(k,w,\hbar)=&\int_{[0,1]^2}\frac{(w+(s+s')\hbar |k|)^2-1}{|\e_\hbar(M,k,w \hat{k})|^2}e^{-\frac{((w+(s+s')\hbar |k|)}{2}}\ud{s}\ud{s'}\nonumber\\
	&+\int_{[0,1]^2}\frac{(w+s\hbar |k|)}{|\e_\hbar(M,k,(w+s'\hbar|k|)\hat{k})|^4} \hat{k}\cdot\nabla_v|\e_\hbar(M,k,(w+s'\hbar|k|)\hat{k})|^2 e^{-\frac{(w+s\hbar |k|)^2}{2}}\ud{s}\ud{s'}.\nonumber
	\end{align}
	
	One have $\forall (k,w,\hbar)\in\mathbb{R}^d\times\mathbb{R}\times(0,\infty)$,
	\begin{gather}
	\label{v}|\varpi(k,w,\hbar)|\lesssim \int_0^2 e^{-\frac{(w+s\hbar |k|)^2}{4}}\ud{s},\\
	\label{vi}\varpi(k,w,0)=\left(\frac{w^2-1}{|\e_0(M,k,w\hat{k})|^2}+\frac{w\hat{k}\cdot\nabla_v|\e_0(M,k,w\hat{k})|^2}{|\e_0(M,k,w\hat{k})|^4}  \right)e^{-\frac{w^2}{2}}		=\frac{\ud}{\ud w}\left(\frac{-w}{|\e_0(M,k,w\hat{k})|^2}e^{-\frac{w^2}{2}}\right),\\
	\label{vii}|\varpi(k,w,\hbar)-\varpi(k,w,0)|\lesssim \hbar (1+|k|)\int_0^2 e^{-\frac{(w+s\hbar |k|)^2}{4}}\ud{s}~{\rm for}~\hbar|k|\leq1.
	\end{gather}
	This last estimate can be deduce from \ref{prop:borne sur epsilone mieux} and using the same method than before.
	
	Using the change of variable $k\mapsto(\sigma,x,|k|)$ introduce in \eqref{eq:changement de variable k}, we have
	\begin{equation}
	\tilde{\mathcal{K}}_\hbar(v_1)=\frac{c_d|\mathbb{S}^{d-2}|}{4\sqrt{2\pi}}\int \V^2(k) |k|^{d}\left(1-x^2\right)^{\frac{d-3}{2}}\varpi(k,|v_1|x,\hbar)\ud{x}\ud{|k|}
	\end{equation}
	We introduce
	\begin{equation}		\tilde{\mathcal{K}}_0(v_1):=\frac{c_d|\mathbb{S}^{d-2}|}{4\sqrt{2\pi}}\int \V^2(k) |k|^{d}\left(1-x^2\right)^{\frac{d-3}{2}}\varpi(k,|v_1|x,0)\ud{x}\ud{|k|}.
	\end{equation}
	
	We split the difference $\tilde{\mathcal{K}}_\hbar(v_1)-\tilde{\mathcal{K}}_0(v_1)$ into two part
	\begin{align}
	&\frac{c_d|\mathbb{S}^{d-2}|}{4\sqrt{2\pi}}\int_{|k|>|\hbar|^{-1}} \V^2(k) |k|^{d}\left(1-x^2\right)^{\frac{d-3}{2}}\left(\varpi(k,|v_1|x,\hbar)-\varpi(k,|v_1|x,0)\right)\ud{x}\ud{|k|}\label{eq:tilde(K)1}\\
	&+\frac{c_d|\mathbb{S}^{d-2}|}{4\sqrt{2\pi}}\int_{|k|<|\hbar|^{-1}} \V^2(k) |k|^{d}\left(1-x^2\right)^{\frac{d-3}{2}}\left(\varpi(k,|v_1|x,\hbar)-\varpi(k,|v_1|x,0)\right)\ud{x}\ud{|k|}\label{eq:tilde(K)2}
	\end{align}
	
	We get the following bound: using \eqref{v},
	\[\begin{split}
	\eqref{eq:tilde(K)1} &\lesssim \int_0^2\int_{\substack{|k|>|\hbar|^{-1}}} \V^2(k) |k|^{d}e^{-\frac{(x|v_1|+s\hbar |k|)^2}{4}}\ud{x}\ud{|k|}\ud{s}\\
	&\lesssim\int_{\substack{|k|>|\hbar|^{-1}}} \V^2(k) |k|^{d}\ud{|k|}\int_{\mathbb{R}}e^{-\frac{(x|v_1|)^2}{2}}\ud{x}\lesssim \frac{\hbar}{<v_1>},
	\end{split}\]
	and using \eqref{vii}
	\[\eqref{eq:tilde(K)2} \lesssim \hbar\int_0^2\int \V^2(k) |k|^{d}(1+|k|)e^{-\frac{(x|v_1|+s\hbar |k|)^2}{4}}\ud{x}\ud{|k|}\ud{s}\lesssim \frac{\hbar}{<v_1>}.\]
	The second  line is obtained using \eqref{vii}. This gives the approximation results 
	\begin{equation}
	\mathcal{K}_\hbar(v_1) = \mathcal{K}_0(v_1) +O(\tfrac{\hbar}{\<v_1\>}).
	\end{equation}
	
	In order to bound $\tilde{\mathcal{K}}_0(v_1)$, we split it into two parts: 
	\begin{align}
	\tilde{\mathcal{K}}_0(v_1)=&\frac{c_d|\mathbb{S}^{d-2}|}{4\sqrt{2\pi}}\int_{|k|<|\hbar|^{-1}} \left(\left(1-x^2\right)^{\frac{d-3}{2}}-1\right) \V^2(k) |k|^{d}\varpi(k,|v_1|x,0)\ud{x}\ud{|k|}\label{eq:tilde(K)3}\\
	&+\frac{c_d|\mathbb{S}^{d-2}|}{4\sqrt{2\pi}}\int_{|k|<|\hbar|^{-1}} \V^2(k) |k|^{d}\varpi(k,|v_1|x,0)\ud{x}\ud{|k|}\label{eq:tilde(K)4}.
	\end{align}
	Using that for $|x|\leq \tfrac{1}{2}$, $(1-(1-x^2)^{(d-3)/2})\lesssim x^2$, and Inequality \eqref{v},
	\begin{multline*}\eqref{eq:tilde(K)3}\lesssim\int_{|x|\leq \frac{1}{2}} |\V(k)|^2 |k|^{d}x^2e^{-\frac{(x|v_1|)^2}{4}}\ud{x}\ud{|k|}+\int_{|x|\leq \frac{1}{2}} |\V(k)|^2 |k|^{d}\left|\left(1-x^2\right)^{\frac{d-3}{2}}-1\right|e^{-\frac{x^2|v_1|^2}{8}}\ud{x}\ud{|k|}\\
	\lesssim \int x^2e^{-\frac{x^2|v_1|^2}{8}}\ud{x}\lesssim\frac{1}{<v_1>^3}.
	\end{multline*}
	Using the equality \eqref{vi},
	\begin{equation*}
	\eqref{eq:tilde(K)4}=\frac{c_d}{\sqrt{2\pi}}\int_{|k|<1/\hbar}|\V(k)|^2|k|^d\int_{-1}^1\frac{\ud}{\ud w}\left(\frac{-w}{|\e_0(M,k,w\hat{k})|^2}e^{-\frac{w^2}{2}}\right)\Bigg\vert_{w=|v_1|x}\ud x\ud k \ud\sigma = O\left(\frac{e^{-\frac{|v_1|^2}{4}}}{\<v_1\>}\right).\end{equation*}
	We deduce that $\tilde{\mathcal{K}}_0(v_1)=O(\tfrac{1}{\<v_1\>^3})$.
	
	\step{2} We treat now the remaining part
	\begin{align*}\label{eq:remaining part}
	&\int(g_1-g_1')^2\left(\sqrt{{M_2}}- \sqrt{M_2'}\right)^2\frac{ \V(k)^2\delta_{ k\cdot(v_2-v_1-\hbar k)}}{\hbar^2|\e_\hbar(M,k,v_1)|^2}\ud{k}\ud{v_2}\ud{v_1}\\
	\lesssim&\int_0^1 \!\!\int(g_1-g_1')^2e^{-\frac{(v_1\cdot \hat{k}+s\hbar|k|)^2}{8}} |k|\V(k)^2\ud{k}\ud{v_1}\ud{s}\\
	\lesssim&\int_0^1\int_{|k|\geq\frac{1}{\sqrt{\hbar}}}\int g_1^2e^{-\frac{(v_1\cdot \hat{k}+s\hbar|k|)^2}{8}} |k|\V(k)^2\ud{v_1}\ud{k}\ud{s}+
	\int_0^1\int_{|k|\leq\frac{1}{\sqrt{\hbar}}}\int(g_1-g_1')^2e^{-\frac{(v_1\cdot \hat{k}+s\hbar|k|)^2}{8}} \frac{\V(k)^2}{\hbar |k|}\ud{k}\ud{v_1}\\
	\lesssim &\hbar\|g\|^2_\hbar.
	\end{align*}
	
	\step{3} We need to identify the limit. As in Step 3 of the proof of Proposition \ref{prop: traitement K}, we use a duality method. For $g$ a smooth test function with compact support,
	\begin{align*}
	\int g_1^2\tilde{K}(v_1)\ud{v_1}	=&\frac{c_d}{4\hbar^{2}}\int (g_1^2-g_1'^2)\left({M_2}-{M_2'}\right)\frac{ \V(k)^2\delta_{ k\cdot(v_2-v_1-\hbar k)}}{|\e_\hbar(M,k,v_1)|^2}\ud{k}\ud{v_2}\ud{v_1}\\
	=&\frac{c_d}{4\hbar^{2}}\int (g''^2_1-g_1^2)\left({M_2}-{M_2'}\right)\frac{ \V(k)^2\delta_{ k\cdot(v_2-v_1)}}{|\e_\hbar(M,k,v_1)|^2}\ud{k}\ud{v_2}\ud{v_1}\\
	=&\frac{c_d}{4}\int k\cdot\nabla g^2_1 k\cdot v_2 M_2\frac{ \V(k)^2\delta_{ k\cdot(v_2-v_1)}}{|\e_0(M,k,v_1)|^2}\ud{k}\ud{v_2}\ud{v_1}+O(\hbar)\\
	=&-\frac{c_d}{4}\int g_1^2 \div_1\left(\left(\int k\otimes k \frac{ \V(k)^2\delta_{ k\cdot(v_2-v_1)}}{|\e_0(M,k,v_1)|^2}\ud{k}M_2\ud{v_2}\right)\cdot v_1\right)\ud{v_1}+O(\hbar)
	\end{align*}

	This conclude the proof.
\end{proof} 

\subsection{Dissipation estimates}
We denote $w_0 = \sqrt{M}$, $(w_i)_{i\in[1,d]}=({v\cdot\gr{e}_i}\sqrt{M})$ (where $(\gr{e}_i)_i$ is an orthogonal bases of $\mathbb{R}^d$) and $w_{d+1}:=\frac{|v|^2-d}{\sqrt{2d}}\sqrt{M}$. We introduce $\pi_0$ the $L^2$-orthonormal on $<w_0,\cdots,w_{d+1}>$,
\[\pi_0[g] := \sum_{i= 0}^{d+1}\left(\int w_i g\right) w_i.\]

\begin{prop}\label{prop:dissipation estimates}
	We fix the dimension $d\geq 2$.
	
	There exists a constant $C>0$ independant of $\hbar$  and an $\hbar_0>0$ such that for any $\hbar<\hbar_0$, $g\in\mathcal{H}_\hbar$, we have the lower bound
	\begin{equation}
	C\int g\mathcal{L}_\hbar g \geq \|\pi_0g\|_\hbar^2.
	\end{equation}
\end{prop}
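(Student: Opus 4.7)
The plan is to exploit the decomposition \eqref{e:decompo quadra 1}--\eqref{e:decompo quadra 3} of $\int g\mathcal{L}_\hbar g$, in which the first summand is already $\tfrac12\|g\|_\hbar^2$, and to treat the remaining two pieces as compact perturbations that see only the complement of $\ker\mathcal{L}_0=\mathrm{span}\{w_0,\dots,w_{d+1}\}$. Proposition \ref{prop:traitement tilde(K)} rewrites \eqref{e:decompo quadra 2} as a scalar multiplier $\int\tilde{\mathcal{K}}_0(v_1)g(v_1)^2\,\ud v_1$ with $\tilde{\mathcal{K}}_0(v_1)=O(\langle v_1\rangle^{-3})$, up to an $O(\hbar\|g\|_\hbar^2)$ error. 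Proposition \ref{prop: traitement K}, after inserting a cutoff $\varphi_K$ away from the diagonal, rewrites \eqref{e:decompo quadra 3} as an integral operator of kernel $\varphi_K(v_1-v_2)\mathcal{K}_\hbar(v_1,v_2)$ minus the local multiplier $\rho_K$, modulo an $O((K^{-1}+\hbar K^4)\|g\|_\hbar^2)$ remainder. The pointwise bound $|\mathcal{K}_\hbar|\lesssim(1+|v_1|^2+|v_2|^2)\sqrt[5]{M_1M_2}/|v_1-v_2|^3$, combined with $d\geq 4$ (so that $|v_1-v_2|^{-3}$ is locally integrable in $\mathbb{R}^d$), makes the corresponding bilinear form Hilbert--Schmidt on a Gaussian-weighted $L^2$-space.

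Based on this reduction I would run a standard compactness-contradiction argument. Assume the conclusion fails; by homogeneity there exist sequences $\hbar_n\to\hbar_\infty\in[0,\hbar_0]$ and $g_n$ lying in the range of $I-\pi_0$ with $\|g_n\|_{\hbar_n}=1$ and $\int g_n\mathcal{L}_{\hbar_n}g_n\to 0$. Proposition \ref{prop:form equivalent norm} supplies a uniform $L^2(\langle v\rangle^{-1}\ud v)$ bound together with a non-local finite-difference control, so a Rellich-type argument extracts a subsequence converging strongly in a Gaussian-weighted $L^2$ to a limit $g_\infty$ still in the range of $I-\pi_0$. The pointwise limit $\mathcal{K}_\hbar\to\mathcal{K}_0$ (Proposition \ref{prop: traitement K}) and the $\hbar$-independence of $\tilde{\mathcal{K}}_0$ (Proposition \ref{prop:traitement tilde(K)}), together with their common domination by $\sqrt[5]{M_1M_2}/|v_1-v_2|^3$, permit dominated convergence; sending first $n\to\infty$ and then $K\to\infty$ produces $\int g_\infty\mathcal{L}_0 g_\infty=0$. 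Non-negativity of $\mathcal{L}_0$ and $\ker\mathcal{L}_0=\mathrm{span}\{w_0,\dots,w_{d+1}\}$ force $g_\infty=0$, while the monotonicity of the norms (Proposition \ref{prop: decroissance des normes}) combined with the strong convergence contradicts $\|g_n\|_{\hbar_n}=1$.

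The main difficulty is quantitative compactness: producing, uniformly in $n$, strong convergence of $g_n$ in a topology fine enough to pass to the limit in $\int g_n\mathcal{L}_{\hbar_n}g_n$, despite the $\hbar$-dependent norm and the need to let the cutoff $K\to\infty$ after $n\to\infty$. One must choose $K=K_n\to\infty$ slowly enough that $K_n^{-1}+\hbar_n K_n^4$ still tends to $0$, which dictates $\hbar_n K_n^4\to 0$. The dimensional restriction $d\geq 4$ enters precisely through the integrability of $|v|^{-3}$; the cases $d=2,3$ would require exploiting the extra non-local regularity encoded in the finite-difference component of $\|\cdot\|_\hbar$ to absorb the diagonal singularity of $\mathcal{K}_\hbar$.
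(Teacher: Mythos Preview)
Your overall strategy---the decomposition \eqref{e:decompo quadra 1}--\eqref{e:decompo quadra 3}, the treatment of the last two pieces as compact remainders via Propositions~\ref{prop: traitement K} and~\ref{prop:traitement tilde(K)}, Rellich-type $L^2_{\rm loc}$ compactness from the finite-difference control, and a contradiction argument---is exactly the paper's. But the way you close the contradiction does not work.

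You claim that once $g_\infty=0$, strong convergence in a Gaussian-weighted $L^2$ combined with Proposition~\ref{prop: decroissance des normes} contradicts $\|g_n\|_{\hbar_n}=1$. It does not: monotonicity only yields \emph{upper} bounds $\|g_n\|_{\hbar_0}\le C\|g_n\|_{\hbar_n}$ for fixed $\hbar_0>\hbar_n$, never a lower bound on $\|g_n\|_{\hbar_n}$ in a coarser norm. Concretely, a sequence like $g_n(v)=\hbar_n\chi(v)\sin(v\cdot e/\hbar_n)$ with $\chi\in\mathcal{C}^\infty_c$ converges to $0$ in every weighted $L^2$ while $\|g_n\|_{\hbar_n}$ remains of order one. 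The finite-difference piece of $\|\cdot\|_{\hbar_n}$ simply does not pass to the limit under the compactness you have.

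The correct closure, and the point you are missing, is that the piece \eqref{e:decompo quadra 1} equals \emph{exactly} $\tfrac12\|g_n\|_{\hbar_n}^2=\tfrac12$ for every $n$; it does not converge to $\tfrac12\|g_\infty\|_0^2$. Only the compact pieces \eqref{e:decompo quadra 2}--\eqref{e:decompo quadra 3} pass to the limit via your strong $L^2_{\rm loc}$ convergence. Thus $0=\lim_n\int g_n\mathcal{L}_{\hbar_n}g_n=\tfrac12+(\text{limit of compact pieces})$, so the compact limit equals $-\tfrac12$. Separately, the identity $\int g_\infty\mathcal{L}_0 g_\infty=\tfrac12\|g_\infty\|_0^2+(\text{same compact limit})$ together with the lower semicontinuity $\|g_\infty\|_0\le 1$ (which the paper proves as an auxiliary lemma, and which you do not mention) gives $\int g_\infty\mathcal{L}_0 g_\infty\le 0$, hence $=0$, hence $g_\infty\in\ker\mathcal{L}_0$, hence $g_\infty=0$. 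But then every compact integral vanishes, forcing the compact limit to be $0$, not $-\tfrac12$: that is the contradiction. In short, the surviving $\tfrac12$ is what carries the contradiction, not any convergence of $\|g_n\|_{\hbar_n}$.
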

\begin{proof}
	We proceed by contradiction. Suppose that there exists a decreasing sequence $\hbar_n\to 0$ and a sequence $(g_n)_n$ such that $\pi_0 g_n=0$,  $\|g_n\|_{\hbar_n}=1$ and $\int g_n\mathcal{L}_{\hbar_n} g_n\to 0$.
	
	Using Proposition \ref{prop: decroissance des normes}, we deduce that up to an extraction, the sequence $(g_n)$ converge weakly to $g_\infty$ in all the  $\mathcal{H}_\hbar$, and there exists a constant $C$ such that
	\[\|g_\infty\|^2_\hbar <C.\]

	\begin{lemma}
		Up to the extraction of a subsequence, $g_\infty\in H^1_{\rm loc}$, $g_n$ converges strongly in $L^2_{\rm loc}$ and 
		\begin{equation}
		\|g\|_0^2\leq 1
		\end{equation}
	\end{lemma}
	\begin{proof}
		Fix $K>0$. We denote $B(K)$ the ball of radius $K$ and of center $0$.
		For any $\hbar\in(0,1)$, $k\in\overline{ B(1)}\setminus B(\tfrac23)$, and any test function $g$, one have 
		\begin{align*}
		\int_{B(K)}\frac{(g(v_1+\hbar k)-g(v_1))^2}{\hbar^2}\ud{v_1}
		\lesssim& \int_{\substack{k'\in \frac{k}{2}+B(\frac14)\\ v_1\in B(K)}}\frac{(g(v_1+\hbar {k'})-g(v_1))^2+(g(v_1+\hbar {k'})-g(v_1-\hbar k))^2}{\hbar^2}\ud{v_1}\ud{k'}\\
		\lesssim&\int_{\substack{k'\in B(\frac{3}{4})\setminus B(\frac1{12})\\ v_1\in B(K)}}\frac{(g(v_1+\hbar {k'})-g(v_1))^2}{\hbar^2}\ud{v_1}\ud{k'}\\
		\lesssim & e^{K^2}\frac{1}{\hbar^2}\int\left({g(v_1)}-{g(v_1+\hbar k)}\right)^2\frac{\V^2(k)}{|k|}e^{-\frac{(v_1\cdot \hat{{k}})^2}{2}}\ud{k'} \ud{v_1}\leq C_K
		\end{align*}
		for some constant $C_K$ depending on $K$. For any $k\in B(\tfrac{1}{3})$, $k = \hat{k} - (1-|k|)\hat{k}$. As $(1-|k|)\in[\tfrac{2}{3},1]$,
		\begin{align*}
			&\int_{B(K)}\frac{(g(v_1+\hbar k)-g(v_1))^2}{\hbar^2}\ud{v_1}\\
			\lesssim& \int_{ B(K)}\frac{(g(v_1+\hbar \hat{k})-g(v_1))^2+(g(v_1+\hbar \hat{k})-g(v_1-\hbar(1-|k|)\hat{k}))^2}{\hbar^2}\ud{v_1}\ud{k'}\\
			\lesssim&\int_{B(2K)}\frac{(g(v_1+\hbar \hat{k})-g(v_1))^2+(g(v_1+\hbar(1-|k|) \hat{k})-g(v_1))^2}{\hbar^2}\ud{v_1}\ud{k'}\lesssim 2C_K
		\end{align*}

		We deduce the uniform continuity of the sequence $(g_n)_n$: for any $\hbar>0$, take some $\hbar'\in(0,\hbar)$.
		\begin{equation*}
			\lim_{\substack{\hbar'\to0\\\hbar'\leq \hbar}}\sup_{\substack{|k| \leq \frac{\hbar'}3\\n\geq0}} \|g_n(k+\cdot)-g_n(\cdot)\|_{L^2(B(K))}\leq C_K\hbar + \lim_{\hbar'\to0} \sup_{\substack{|k| \leq \frac{\hbar'}3\\n,\,\hbar_n\geq \hbar}} \|g_n(k+\cdot)-g_n(\cdot)\|_{L^2(B(K))}\leq C_K\hbar.
		\end{equation*}
		One can apply Rellich-Kondrakov theorem. Hence the sequence $(g_n)_n$ converges strongly in $L^2(B(K))$.
		
		In addition, as $\|g_\infty\|_\hbar$ is uniformly bounded, we deduce that for any $k\in\mathbb{R}^d$,
		\[\|g_\infty(k+\cdot)-g_\infty\|_{L^2(B(K))}\leq |k|C_K,\]
		which is a criteria that proves $g_\infty\in H^1_{\rm loc}$, and $\frac{\left({g_\infty(\cdot+\hbar k)}-{g_\infty(\cdot)}\right)}{\hbar} $ converges weakly to $k\cdot\nabla g_\infty$ in $L^2(B(K))$.
		
		For any $K>0$,				
		\begin{align*}
		&\frac{c_d}{\hbar^{2}}\int_{|v_1|\leq K} \left[\left({g_{n,1}}-{g'_{n,1}}\right)^2M_2+g_{n,1}^2\left(\sqrt{M_2}-\sqrt{M'_2}\right)^2\right] \frac{ \V(k)^2\delta_{k\cdot(v_2-v_1-\hbar k)}}{|\e_\hbar(M,k,v_1)|^2}\ud{k}\ud{v_1}\\
		&=\frac{c_d}{\sqrt{2\pi}}\int_{|v_1|\leq K} \left[\left(\frac{{g_n(v_1+\hbar k)}-{g_n(v_1)}}{\hbar}\right)^2+\frac{g_{n,1}^2}{\hbar^2}\left(1-e^{-\frac{2\hbar v_1\cdot k +\hbar^2 |k|^2}{2}}\right)^2\right] \frac{ \V(k)^2e^{-\frac{(v_1\cdot\hat{k})^2}{2}}}{|\e_\hbar(M,k,v_1)|^2}\ud{k}\ud{v_1}\\
		&=\frac{c_d}{\sqrt{2\pi}}\int_{|v_1|\leq K} \left[\left(\frac{{g_n(v_1+\hbar k)}-{g_n(v_1)}}{\hbar}\right)^2+g_{n,1}^2\left(v_1\cdot k\right)^2\right]\frac{ \V(k)^2e^{-\frac{(v_1\cdot\hat{k})^2}{2}}}{|\e_0(M,k,v_1)|^2}\ud{k}\ud{v_1}+O\left(\hbar\|g_n\|^2_\hbar\right)
		\end{align*}
		
		One can take the infimum limit as $n\to0$ (as everything is convex, strongly continuous):
		\[\frac{c_d}{\sqrt{2\pi}}\int_{|v_1|\leq K} \left[\left(k\cdot\nabla g_\infty(v_1)\right)^2+g_{\infty}(v_1)^2\left(v_1\cdot k\right)^2\right]e^{-\frac{(v_1\cdot\hat{k})^2}{2}}\frac{ \V(k)^2}{|\e_0(M,k,v_1)|^2}\ud{k}\ud{v_1}\leq 1\]
		
		Finally we take the limit $K\to \infty$ by monotone convergence. This conclude the proof.
	\end{proof}

	In the following, we prove the convergence of $\int g_n\mathcal{L}_{\hbar_n} g_n$ to $0$. In dimension $3$, we will need a cut-off function $\varphi_K$ introduce in Definition \ref{def: fonction de cutoff}. If the dimension is bigger than $4$, this cut-off function is not needed, and we consider $\varphi_K \equiv 1$.
	
	We recall the decomposition of $\int g_n\mathcal{L}_{\hbar_n} g_n$: for any $K>0$,
	\begin{align*}
	\int g_n\mathcal{L}_{\hbar_n} g_n
	=&\frac{c_d}{2{\hbar_n}^2}\int \left[\left(g_{n,1}-g'_{n,1}\right)^2M_2+g_{n,1}'^2\left(\sqrt{M_2}-\sqrt{M_2'}\right)^2\right]\frac{\V(k)^2\delta_{k\cdot(v_2-v_1-\hbar_n k)}}{|\e_{\hbar_n}(M,v_1,k)|^2}\ud{k}\ud{v_1}\ud{v_2}\\
	&+\frac{c_d}{\hbar_n^2}\int \left[\left(g_{n,1}-g'_{n,1}\right)g_{n,1}'\sqrt{M_2}\left(\sqrt{M_2}-\sqrt{M_2'}\right)\right]\frac{\V(k)^2\delta_{k\cdot(v_2-v_1-\hbar_n k)}}{|\e_{\hbar_n}(M,v_1,k)|^2}\ud{k}\ud{v_1}\ud{v_2}\\
	&+\frac{c_d}{2\hbar_n^2}\int \left(g_{n,1}\sqrt{M_2}-g'_{n,1}\sqrt{M'_2}\right)\left(g_{n,2}\sqrt{M_1}-g'_{n,2}\sqrt{M'_1}\right)\frac{\V(k)^2\delta_{k\cdot(v_2-v_1-\hbar k)}}{|\e_\hbar(M,v_1,k)|^2}\ud{k}\ud{v_1}\ud{v_2}
	\end{align*}
	\begin{itemize}
	\item Using, the first line is equal to
	\[\frac{c_d}{2{\hbar_n}^2}\int \left[\left(g_{n,1}-g'_{n,1}\right)^2M_2+g_{n,1}'^2\left(\sqrt{M_2}-\sqrt{M_2'}\right)^2\right]\frac{\V(k)^2\delta_{k\cdot(v_2-v_1-\hbar_n k)}}{|\e_{\hbar_n}(M,v_1,k)|^2}\ud{k}\ud{v_1}\ud{v_2} = \frac{\|g_n\|_{\hbar_n}}{2} = \frac12.\]
	
	\item For second line, using Proposition \ref{prop:traitement tilde(K)} and that
	\[\int_{|v_1|> K} g_n^2(v_1)\left|\tilde{\mathcal{K}}_{0}(v_1)\right|\ud{v_1} \lesssim\int_{|v_1|>K}\frac{g_n(v_1)^2}{\<v_1\>^3}\ud{v_1}=O(\tfrac{1}{K}),\]
	one have
	\begin{multline*}
	\frac{c_d}{\hbar_n^2}\int \left[\left(g_{n,1}-g'_{n,1}\right)g_{n,1}'\sqrt{M_2}\left(\sqrt{M_2}-\sqrt{M_2'}\right)\right]\frac{\V(k)^2\delta_{k\cdot(v_2-v_1-\hbar_n k)}}{|\e_{\hbar_n}(M,v_1,k)|^2}\ud{k}\ud{v_1}\ud{v_2}\\
	= \int_{|v_1|\leq K} g_n^2(v_1)\tilde{\mathcal{K}}_{0}(v_1)\ud{v_1}+O\left(\tfrac{1}{K}+\hbar_n\right).
	\end{multline*}
	
	\item For the third line, using Proposition \ref{prop: traitement K}, we perform the decomposition
	\begin{multline*}
	\frac{c_d}{\hbar_n^2}\int \left(g_{n,1}\sqrt{M_2}-g'_{n,1}\sqrt{M'_2}\right)\left(g_{n,2}\sqrt{M_1}-g'_{n,2}\sqrt{M'_1}\right)\frac{\V(k)^2\delta_{k\cdot(v_2-v_1-\hbar_n k)}}{|\e_{\hbar_n}(M,v_1,k)|^2}\ud{k}\ud{v_1}\ud{v_2}\\
	=\int_{\substack{|v_1|< K\\|v_2|< K}}\left(\varphi_K(v_1-v_2)\mathcal{K}_{\hbar_n}(v_1,v_2)-\rho_K(v_1-v_2,\tfrac{v_1+v_2}{2})\right)g_n(v_1)g_n(v_2)\ud{v_1}\ud{v_2} +O\left(\tfrac{1}{K}+K^3\hbar_n\right)
	.
	\end{multline*}
	We used that
	\begin{multline*}
	\int_{\substack{\{|v_1|\geq K\}}}\varphi_K(v_1-v_2)\left(\left|\mathcal{K}_{\hbar_n}(v_1,v_2)\right|+\left|\rho_K(v_1-v_2,\tfrac{v_1+v_2}{2})\right|\right)\left|g_{n,1}\right|\left|g_{n,2}\right|\ud{v_1}\ud{v_2}\\
	\lesssim
	\int_{\substack{|v_1|\geq K\\|v_1-v_2|>\frac{1}{3K}}}\frac{\sqrt[5]{M_1M_2}}{|v_1-v_2|^3}\left|g_{n,1}\right|\left|g_{n,2}\right|\ud{v_1}\ud{v_2} = O(\tfrac{1}{K}).
	\end{multline*}
	\end{itemize}
	
	
	We can take now the limit $n\to \infty$, using that $g_n$ converges strongly to $g_\infty$ in $L^2_{\rm loc}$:
	\begin{align}
	0&=\frac{1}{2}+\frac{1}{2}\int_{\substack{|v_1|<K\\|v_2|< K}}\left(\varphi_K(v_1-v_2)\mathcal{K}_{0}(v_1,v_2)-\rho_K(v_1-v_2,\tfrac{v_1+v_2}{2})\right) g_{\infty}(v_1)g_{\infty}(v_2)\ud{v_1}\ud{v_2}\nonumber \\
	&\hspace{8cm}+\int_{|v_1|\leq K} g_{\infty}(v_1)^2\tilde{\mathcal{K}}_{0}(v_1)\ud{v_1}+O(\tfrac{1}{K})\nonumber\\
	&\geq\frac{\|g_\infty\|_0^2}{2}+\frac{1}{2}\int\left(\varphi_K(v_1-v_2)\mathcal{K}_{0}(v_1,v_2)-\rho_K(v_1-v_2,\tfrac{v_1+v_2}{2})\right) g_{\infty}(v_1)g_{\infty}(v_2)\ud{v_1}\ud{v_2} \label{eq:celle ou y a K}\\
	&\hspace{8.5cm}+\int g_{\infty}(v_1)^2\tilde{\mathcal{K}}_{0}(v_1)\ud{v_1}+O(\tfrac{1}{K}).\nonumber
	\end{align}
	
	One can perform an integration by part and
	\begin{multline*}
	\int g_{\infty}(v_1)^2\tilde{\mathcal{K}}_{0}(v_1)\ud{v_1} = \frac{c_d}{4} \int g_{\infty}(v_1)^2\nabla_1\cdot\left(v_1\int k\otimes k \frac{ \V(k)^2\delta_{ k\cdot(v_2-v_1)}}{|\e_0(M,k,v_1)|^2}\ud{k}M_2\ud{v_2}\right)\\
	=  -\frac{c_d}{2}\int \nabla g_{\infty}(v_1)B(v_1,v_2)(g_\infty(v_1)v_1)M_2\ud{v_2}\ud{v_1} \ud{v_2.}
	\end{multline*}
	
	In the same way, using that $B(v_1,v_2)(v_1-v_2)=0$,
	\begin{align*}
	&\left(\nabla_1-\frac{v_1}{2}\right)\otimes\left(\nabla_2-\frac{v_2}{2}\right):\left(\varphi_K(v_1-v_2) \sqrt{M_1M_2}B(v_1,v_2) \right)+\varphi_K(v_1-v_2)\mathcal{K}_0(v_1-v_2)\\
	&=-\nabla^2\varphi_K(v_1-v_2):B(v_1,v_2)\sqrt{M_1M_2}+\nabla\varphi_K(v_1-v_2)\otimes\left(\nabla_2-\nabla_1-\tfrac{v_2-v_1}{2}\right):\left(\sqrt{M_1M_2}B(v_1,v_2)\right)\\
	&=\sqrt{M_1M_2}\left(-\nabla^2\varphi_K(v_1-v_2):B(v_1,v_2)-\nabla\varphi_K(v_1-v_2)\cdot\left(\left(\nabla_1-\nabla_2\right)\cdot B(v_1,v_2)\right)\right)
	\end{align*}
	
	Using that $\varphi_K(v)= \varphi(K|v|)$,
	\[\nabla^2\varphi_K(v_1-v_2):B(v_1,v_2) = \frac{K\varphi'(K|v_1-v_2|)}{|v_1-v_2|^2}\int_{<v_1-v_2>^\perp}\frac{|\V(k)|^2|k|^2\ud k}{|\e_0(M,\frac{v_1+v_2}{2},k)|^2} = \rho_K(v_1-v_2,\tfrac{v_1+v_2}{2}),\]
	where $\rho_K$ has been introduce in \eqref{eq:def rhoK}.
	
	\begin{align*}
	&\div_{v_1}\int  \frac{ \V(k)^2k\otimes k}{|\e_0(M,k,v_1)|^2}\delta_{ k\cdot(v_1-v_2)}\ud{k}\\
	=& \int k\cdot \nabla_{v_1} \left(\frac{ \V(k)^2}{|\e_0(M,k,v_1)|^2}\right)k\delta_{ k\cdot(v_1-v_2)}\ud{k}+\int \frac{ \V(k)^2|k|^2}{|\e_0(M,k,v_1)|^2}k\delta'_{ k\cdot(v_1-v_2)}\ud{k}\\
	=& \int k\cdot \nabla_{v_1} \left(\frac{ \V(k)^2}{|\e_0(M,k,v_1)|^2}\right)k\delta_{ k\cdot(v_1-v_2)}\ud{k}-\int\frac{v_1-v_2}{|v_1-v_2|^2}\cdot\nabla_k\left(\frac{ \V(k)^2|k|^2}{|\e_0(M,k,v_1)|^2}\right)k\delta_{ k\cdot(v_1-v_2)}\ud{k}\\
	&\hspace{8cm}-\frac{v_1-v_2}{|v_1-v_2|^2}\int\frac{ \V(k)^2|k|^2}{|\e_0(M,k,v_1)|^2}\delta_{ k\cdot(v_1-v_2)}\ud{k}.\end{align*}
	Using that $\nabla\varphi_K(v)=K\varphi'(K|v|)\tfrac{v}{|v|}$, one have
	\[-\nabla\varphi_K(v_1-v_2)\cdot\left(\left(\nabla_1-\nabla_2\right)\cdot B(v_1,v_2)\right)=2\rho_K(v_1-v_2,\tfrac{v_1+v_2}{2}).\]
	Finally, 
	\begin{align*}
	\int&\left(\varphi_K(v_1-v_2) \mathcal{K}_{0}(v_1,v_2)-\rho_K(v_1-v_2,\tfrac{v_1+v_2}{2})\right)g_{\infty}(v_1)g_{\infty}(v_2)\ud{v_1}\ud{v_2}\\
	=&-\int g_{\infty}(v_1)g_{\infty}(v_2)\left(\nabla_1-\frac{v_1}{2}\right)\otimes\left(\nabla_2-\frac{v_2}{2}\right):\left[\varphi_K(v_1-v_2)B(v_1,v_2)\sqrt{M_1M_2}\right]\ud{v_1}\ud{v_2}\\
	=&-\int\varphi_K(v_1-v_2) \left(\nabla_1+\frac{v_1}{2}\right)g_{\infty}(v_1)B(v_1,v_2)\left(\nabla_2+\frac{v_2}{2}\right)g_{\infty}(v_2)\sqrt{M_1M_2}\ud{v_1}\ud{v_2}\\
	&+\int\left(1-\varphi_K(v_1-v_2)\right) \left(\nabla_1+\frac{v_1}{2}\right)g_{\infty}(v_1)B(v_1,v_2)\left(\nabla_2+\frac{v_2}{2}\right)g_{\infty}(v_2)\sqrt{M_1M_2}\ud{v_1}\ud{v_2}.
	\end{align*}
	
	One can bound the est using the Young inequality:
	\begin{multline*}\int\left|1-\varphi_K(v_1-v_2)\right| \left|\left(\nabla_1+\frac{v_1}{2}\right)g_{\infty}(v_1)\right|\left|B(v_1,v_2)\right|\left|\left(\nabla_2+\frac{v_2}{2}\right)g_{\infty}(v_2)\sqrt{M_1M_2}\ud{v_1}\ud{v_2}\right|\\
	\lesssim \left\|\frac{1-\varphi_K(v)}{|v|}\right\|_{L^1}\left\|\left(\nabla_2+\frac{v_2}{2}\right)g_{\infty}(v_2)\sqrt{M_2}\right\|^2_{L^2}=O(\tfrac{1}{K}).
	\end{multline*}
	
	Hence, taking the limit $K\to\infty$ in \eqref{eq:celle ou y a K},
	\begin{multline*}
	0
	\geq\frac{1}{2}\int \left[\left(\nabla_1+\frac{v_1}{2}\right)g_{\infty,1}\sqrt{M_2}-\left(\nabla_2+\frac{v_2}{2}\right)g_{\infty,2}\sqrt{M_1}\right]B(v_1,v_2)\\
	\left[\left(\nabla_1+\frac{v_1}{2}\right)g_{\infty,1}\sqrt{M_2}-\left(\nabla_2+\frac{v_2}{2}\right)g_{\infty,2}\sqrt{M_1}\right]\ud{v_1}\ud{v_2}
	\end{multline*}
	
	As this term is non-negative, it is zero. We deduce that (see \cite{DW}, Subset 2.2 of the proof of Proposition 2.5), $g_\infty\in<w_0,\cdots,w_{d+1}>$, which contradict the fact that $\pi_0(g_\infty)=0$. This conclude the proof.
\end{proof}

\begin{prop}\label{prop:control pi0}
	For any test function $g$
	\begin{eqnarray}
	\left\|\pi_0\left[\left(\nabla-\frac{v}{2}\right)g\right]\right\|_\hbar \lesssim \min(\|g\|_\hbar,\|g\|)
	\end{eqnarray}
\end{prop}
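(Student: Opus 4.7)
The strategy is to compute $\pi_0[(\nabla - v/2)g]$ explicitly by integration by parts, reducing the $\|\cdot\|_\hbar$-norm estimate to bounds on a handful of scalar inner products against Schwartz functions.

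For each component $j\in[1,d]$, integration by parts gives
\[
\left\< \left(\partial_j - \tfrac{v_j}{2}\right)g,\,w_i\right\>_{L^2} = -\left\< g,\, \left(\partial_j + \tfrac{v_j}{2}\right) w_i\right\>_{L^2}.
\]
A direct computation using $(\partial_j + v_j/2)\sqrt{M} = 0$ yields
\[
\left(\partial_j + \tfrac{v_j}{2}\right)w_0 = 0,\quad \left(\partial_j + \tfrac{v_j}{2}\right)w_k = \delta_{jk}\sqrt{M},\quad \left(\partial_j + \tfrac{v_j}{2}\right)w_{d+1} = \sqrt{\tfrac{2}{d}}\,v_j\sqrt{M},
\]
so that only two of the $d+2$ projection coefficients survive:
\[
\pi_0\!\left[\left(\partial_j - \tfrac{v_j}{2}\right)g\right] = -\left\<g,\sqrt{M}\right\> w_j - \sqrt{\tfrac{2}{d}}\left\<g,v_j\sqrt{M}\right\> w_{d+1}.
\]

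It remains to bound the scalar coefficients and the $\|\cdot\|_\hbar$-norms of $w_j$ and $w_{d+1}$. On the one hand, $|\<g,\sqrt{M}\>| \le \|g\|\,\|\sqrt{M}\| \lesssim \|g\|$, and similarly $|\<g,v_j\sqrt{M}\>|\lesssim \|g\|$. On the other hand, Proposition \ref{prop:form equivalent norm} gives $\|\<v\>^{-1/2}g\| \lesssim \|g\|_\hbar$, so by Cauchy--Schwarz
\[
\left|\<g,\sqrt{M}\>\right| \le \|\<v\>^{-1/2}g\|\,\|\<v\>^{1/2}\sqrt{M}\| \lesssim \|g\|_\hbar,
\]
and likewise $|\<g,v_j\sqrt{M}\>|\lesssim \|g\|_\hbar$. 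Thus both coefficients are controlled by $\min(\|g\|,\|g\|_\hbar)$.

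For the vectors $w_j$ and $w_{d+1}$, which are Schwartz, a Taylor expansion $w_i(v_1+\hbar k) - w_i(v_1) = O(\hbar |k|\, \<v_1\>^{\deg w_i}e^{-|v_1|^2/8})$ together with the decay $\V^2(k)\lesssim \<k\>^{-s}$ from Assumption \ref{ass: potentiel} shows
\[
\tfrac{1}{\hbar^2}\int (w_i(v_1)-w_i(v_1+\hbar k))^2\tfrac{\V^2(k)}{|k|}e^{-\alpha(v_1\cdot \hat k)^2}\ud k\ud v_1 \lesssim 1,
\]
and the weighted $L^2$ term is obviously finite, so Proposition \ref{prop:form equivalent norm} gives $\|w_i\|_\hbar \lesssim 1$ uniformly in $\hbar$. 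Summing over $j$ concludes the proof. No serious obstacle arises; the only point that deserves care is verifying the uniform-in-$\hbar$ bound on $\|w_i\|_\hbar$, which is however immediate from the equivalent form of the norm and the decay of $\hat{\mathcal{V}}$.
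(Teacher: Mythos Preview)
Your proof is correct and follows essentially the same approach as the paper's: bound the projection coefficients by Cauchy--Schwarz (after integration by parts) against the Gaussian weights $w_i$, and bound $\|w_i\|_\hbar$ uniformly in $\hbar$. The only cosmetic difference is that the paper obtains $\|w_i\|_\hbar\lesssim\|w_i\|_0<\infty$ by invoking the monotonicity of Proposition~\ref{prop: decroissance des normes}, whereas you re-derive this uniform bound directly from the equivalent norm of Proposition~\ref{prop:form equivalent norm} via a Taylor expansion; both routes are immediate for Schwartz functions.
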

\begin{proof}
	We recall that 
	\[\pi_0[g] := \sum_{i= 0}^{d+1}\left(\int w_i g\right) w_i\]
	As $|wi|= O(\sqrt[4]{M})$ and $\|w_i\|_\hbar\lesssim\|w_i\|_0<\infty$, we can conclude.
\end{proof}

\subsection{Non-linear estimation and proof of Theorem}

In the following, we denote $\mathcal{H}^r_\hbar$ the Hilbert space of norm
\[\|g\|_{\hbar,r}^2:=\sum_{r'=0}^r\left\|\left(\nabla-\frac{v}{2}\right)^{r'}g\right\|^2_\hbar.\]

The present section is dedicated to bound $\mathcal{L}_\hbar$ and $\mathcal{Q}_\hbar$.

We denote  
\begin{align}
\Lambda(p)[v_1,k] := \int\frac{p(v_3)\sqrt{M(v_3)}-p(v_3+\hbar k)\sqrt{M(v_3+\hbar k)}}{\hbar k \cdot(v_3-v_1-\hbar k)+i0}\ud{v_3}.
\end{align}

\begin{prop}\label{prop:estimation terme quadratique}
	We fix the dimension $d\geq 2$. For any test functions $f$, $g$, $h$, and $p$, the  three following bounds hold:
	\begin{align}
	\label{eq:viii}\int \left|\DDelta\left({f}\sqrt{M_*}\right)\right|\left|\DDelta\left(gh_*\right)\right|\frac{\V^2(k)\delta_{k\cdot(v_2-v_1-\hbar k)}}{\hbar^2} &\ud{k}\ud{v_1}\ud{v_2}\lesssim\|f\|_\hbar\|g\|_\hbar\|h\|_3,\\
	\label{eq:ix}\int \left|\DDelta\left({f}{\sqrt{M_*}}\right)\right|\left|\DDelta\left({gh_*}\right)\right|\left|\Lambda(p)\right|\frac{\V^2(k)\delta_{k\cdot(v_2-v_1-\hbar k)}}{\hbar^2} &\ud{k}\ud{v_1}\ud{v_2}\lesssim\|f\|_\hbar\|p\|_\hbar\|g\|_2\|h\|_2,
	\end{align}
	\begin{equation}\label{eq:x}
		\left\|\V(k)\Lambda(p)\right\|_{L^\infty_{k,v_1}}\lesssim \|p\|_{2}.
	\end{equation}
\end{prop}
\begin{proof}
	\step{1} We begin by \eqref{eq:viii}. Using the symmetry of the expression, it is sufficient to bound
	\begin{equation}\label{eq:premier terme quadratique}
		\frac{1}{\hbar^{2}}\int \left|f_1\sqrt{M_2}-f_1'\sqrt{M_2'}\right|\left|g_1h_2-g_1'h_2'\right| \V(k)^2\delta_{k\cdot(v_2-v_1-\hbar k)} \ud k\ud{v_2}\ud{v_1}.
	\end{equation}
	
	We can split it into four parts:
	\begin{align*}
	&\frac{1}{\hbar^{2}}\int \left|e^{\frac{|v_2\cdot k|^2}{8|k|^2}}(f_1-f_1')\sqrt{M_2}\right|\left|e^{-\frac{|v_2\cdot k|^2}{8|k|^2}}g_1(h_2-h_2')\right| \V(k)^2\delta_{k\cdot(v_2-v_1-\hbar k)}\ud k\ud{v_2}\ud{v_1}\\
	& +\frac{1}{\hbar^{2}}\int \left|e^{\frac{|v_2\cdot k|^2}{8|k|^2}}f_1'(\sqrt{M_2'}-\sqrt{M_2})\right|\left|e^{-\frac{|v_2\cdot k|^2}{8|k|^2}}g_1(h_2-h_2')\right| \V(k)^2\delta_{k\cdot(v_2-v_1-\hbar k)} \ud k\ud{v_2}\ud{v_1}\\
	&+\frac{1}{\hbar^{2}}\int \left|(f_1-f_1')\sqrt[4]{M_2}\right|\left|\sqrt[4]{M_2}(g_1-g_1')h_2'\right| \V(k)^2\delta_{k\cdot(v_2-v_1-\hbar k)} \ud k\ud{v_2}\ud{v_1}\\
	&+\frac{1}{\hbar^{2}}\int \left|f_1'(\sqrt[4]{M_2}-\sqrt[4]{M_2'})\right|\left|(\sqrt[4]{M_2}+\sqrt[4]{M'_2})(g_1-g_1')h_2'\right| \V(k)^2\delta_{k\cdot(v_2-v_1-\hbar k)} \ud k\ud{v_2}\ud{v_1}
	\end{align*}
	
	Applying the Cauchy--Schwartz inequality to each term of the sum, we obtain
	\[\eqref{eq:premier terme quadratique}\lesssim A_1(f)(A_2(g,h)+A_2(h,g))\]
	where we denote
	\begin{gather*}
		A_1(f) := \int e^{\frac{|v_2\cdot k|^2}{4|k|^2}}\left[(f_1-f_1')^2M_2+(f_1')^2(\sqrt{M_2'}-\sqrt{M_2})^2\right] \frac{\V(k)^2\delta_{k\cdot(v_2-v_1-\hbar k)}\ud{k}\ud{v_2}\ud{v_1}}{\hbar^2}.\\
		A_2(g,h):= \int e^{-\frac{|v_2\cdot k|^2}{4|k|^2}}g^2_1(h_2-h_2')^2 \frac{\V(k)^2\delta_{k\cdot(v_2-v_1-\hbar k)}\ud k\ud{v_2}\ud{v_1}}{\hbar^2}.
	\end{gather*}
	
	Using Proposition \ref{prop:form equivalent norm} and usual Sobolev injection theorem, these terms can be bound by
	\begin{gather*}
		A_1(f) \lesssim \|f\|_\hbar^2,\\
		A_2(g,h) \lesssim \|\nabla h\|^2_{L^\infty(<\hat{k}>,L^2(<\hat{k}>^\perp))}\int g^2_1 e^{-\frac{|v_2\cdot k|^2}{4|k|^2}}|k|^2\V(k)^2\ud k\ud{v_1}\leq \|h\|_2^2\|g\|_\hbar^2.
	\end{gather*}
	This conclude the proof of \eqref{eq:viii}.
	
	\step{2}  We treat now \eqref{eq:ix}. We introduce the orthogonal decomposition $v = v^\para+v^\perp$, where $v^\para$ lives in $\mathbb{R}k$. Remark that $\Lambda(p)(v_1,k) = \Lambda(p)(v_1^\para,k)$.
	
	We bound first $\Lambda$: using the $L^2\to L^2$ bound on the Hilbert transform,
	\[\begin{split}
	\int\frac{\V^2(k)}{|k|} |\Lambda[p](v_1^\para,k)|^2&\ud{v^\para_1}\lesssim\int\frac{\V^2(k)}{|k|} \left|\int_{<k^\perp>} (p_1\sqrt{M_1}-p_1'\sqrt{M'_1})\ud v_1^\perp\right|^2\ud{v^\para_1}\ud{k}\\
	&\lesssim\int\frac{\V^2(k)}{|k|} \left|\int_{<k^\perp>} (p_1e^{-\frac{|v_1^{|\hspace{-1pt}|}|^2}{4}}-p_1'e^{-\frac{|v_1^{|\hspace{-1pt}|}+\hbar k|^2}{4}})e^{-\frac{|v_1^\perp|^2}{2}}\ud v_1^\perp\right|^2\ud{v^\para_1}\ud k\\
	&\lesssim\int\frac{\V^2(k)}{|k|}  (p_1e^{-\frac{|v_1^{|\hspace{-1pt}|}|^2}{4}}-p_1'e^{-\frac{|v_1^{|\hspace{-1pt}|}+\hbar k|^2}{4}})e^{-\frac{|v_1^\perp|^2}{2}}\ud v_1 \ud k \hbar^2 \|p\|_\hbar^2
	\end{split}\]
	Hence, 
	\begin{align*}
		&\int \left[\Lambda(p)(h_1g_2-h_1'g_2')\right]^2 \frac{\V(k)^2\delta_{k\cdot(v_2-v_1-\hbar k)}}{\hbar^2}\ud k\ud{v_2}\ud{v_1}\\
		&\lesssim \int\left[ \Lambda[p](v_1^\para,k)
		(h(v_1^\para+v_1^\perp)g(v_1^\para+\hbar k+v_2^\perp))-(h(v_1^\para+\hbar k+v_1^\perp)g(v_1^\para+v_1^\perp))\right]^2 \frac{\V(k)^2}{|k|\hbar^2}\ud k\ud{v_1^\para}\ud{v^\perp_2}\ud{v^\perp_1}\\
		&\lesssim |p\|_\hbar^2 \|g\|^2_{L^\infty(<\hat{k}>,L^2(<\hat{k}>^\perp))}\|h\|^2_{L^\infty(<\hat{k}>,L^2(<\hat{k}>^\perp))}\\
		&\lesssim\left(\|p\|_\hbar\|g\|_1\|h\|_1\right)^2.
	\end{align*}
	
	Applying the Cauchy--Schwartz inequality, 
	\begin{equation*}
		\eqref{eq:ix}\lesssim A_1(f)^{\frac{1}{2}}\left(\int \left[\Lambda(p)(h_1g_2-h_1'g_2')\right]^2 \frac{\V(k)^2\delta_{k\cdot(v_2-v_1-\hbar k)}}{\hbar^2}\ud k\ud{v_2}\ud{v_1}\right)^{\frac{1}{2}}\lesssim\|f\|_\hbar\|p\|_\hbar\|g\|_1\|h\|_1.
	\end{equation*}
	
	\step{3} For any test function $p$, using the orthogonal decomposition $v_1 = v^\para_1+v_1^\perp$, $v_1^\para\in\mathbb{R}k$,
		\begin{align*}
			\left\|\V(k)\Lambda(p)\right\|_{L^\infty_{k,v_1}}&\lesssim\left\|\V(k)\nabla_{v_1}\Lambda(p)\right\|_{L^\infty_{k}(L^2(\ud v_1^{|\hspace{-1pt}|})}=\left\|\V(k)\Lambda\left(\left(\nabla-\frac{v}{2}\right)p\right)\right\|_{L^\infty_{k}(L^2(\ud v_1^{|\hspace{-1pt}|})}\\
			&\lesssim\left\|\frac{\V(k)}{\hbar|k|}\int\left(\left(\nabla-\frac{v}{2}\right)p(v_*+\hbar k)-\left(\nabla-\frac{v}{2}\right)p(v_*)\right)\ud{v_*^\perp}\right\|_{L^\infty(\ud{k},L^2(\ud{v_1\cdot\hat{k}}))}\lesssim \left\|p\right\|_2.
		\end{align*}
\end{proof}

\begin{prop}\label{prop:controle dans les sobolev des termes quadratique}
	We fix the dimension $d\geq 2$.	Denoting $\tilde{r}:= \max(5,r)$, for $f,g,h$ three tests functions
	\begin{equation}
		\left|\int\left(-\nabla-\frac{v}{2}\right)^r\left(\nabla-\frac{v}{2}\right)^rf \mathcal{Q}_\hbar(g,h)\right|
		\lesssim \left(1+\|h\|_{\tilde{r}-1}\right)^{r+1}\left(\|g\|_{\hbar,r}\|h\|_{\tilde{r}-1}+\|g\|_{\hbar,r}\|h\|_{\tilde{r}-1}\right)\|\left\|f\right\|_{\hbar,r},
	\end{equation}
	\begin{equation}
		\int\left(-\nabla-\frac{v}{2}\right)^r\left(\nabla-\frac{v}{2}\right)^rf \mathcal{L}_\hbar g = \int\left(\nabla-\frac{v}{2}\right)^rf \mathcal{L}_\hbar \left(\nabla-\frac{v}{2}\right)^rg+O\left(\|g\|_{\hbar,{r}-1}\|\left\|f\right\|_{\hbar,r}\right),
	\end{equation}
	\begin{multline}
	\left|\int\left(-\nabla-\frac{v}{2}\right)^r\left(\nabla-\frac{v}{2}\right)^rf \left[\mathcal{Q}_\hbar(g,h)-\mathcal{Q}_\hbar(\tilde{g},\tilde{h})\right]\right|\\
	\lesssim\left\|f\right\|_{\hbar,r} \left(1+\|h\|_{\tilde{r}-1}+\|\tilde{h}\|_{\tilde{r}-1}\right)^{r+1}\Big(\|g-\tilde{g}\|_{\hbar,r}+\left(\|g\|_{\tilde{r}-1}+\|\tilde{g}\|_{\tilde{r}-1}\right)\|h-\tilde{h}\|_{\hbar,r}\\
	+\left(\|h\|_{\hbar,r}+\|\tilde{h}\|_{\hbar,r}\right)\|g-\tilde{g}\|_{\tilde{r}-1}+\left(\|g\|_{\hbar,r}+\|\tilde{g}\|_{\hbar,r}\right)\|h-\tilde{h}\|_{\tilde{r}-1}\Big).
	\end{multline}
\end{prop}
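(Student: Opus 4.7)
The plan is to distribute the derivatives $(\nabla-\tfrac{v}{2})^r$ over the integrand of $\mathcal{Q}_\hbar(g,h)$ via Leibniz and then close each resulting piece with Proposition~\ref{prop:estimation terme quadratique}. The integrand has four kinds of factors: the double difference $\DDelta(gh_*)$ or $\DDelta(g\sqrt{M_*})$, the Maxwellian $\sqrt{M_2}$, the weight $\hat{\mathcal{V}}(k)^2\delta_{k\cdot(v_2-v_1-\hbar k)}$, and the dielectric $|\e_\hbar(\cdot,k,v_1)|^{-2}$. When derivatives fall on the double difference, they hit $g$ or $h$ and become extra derivatives in the Sobolev norms; the remaining double-difference integral is controlled by~\eqref{eq:viii}, with one function in the $\|\cdot\|_{\hbar,r}$-slot and the other two in $\|\cdot\|_{\tilde r-1}$. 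When they fall on $\sqrt{M_2}$, they produce polynomial weights absorbed by the Gaussian decay. The truly nontrivial case is when derivatives fall on the dielectric.

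For this, I would use Proposition~\ref{prop:borne sur epsilone mieux}\eqref{iv} to bound each $v_1$-derivative of $|\e_\hbar(F_h,k,v_1)|^{-2}$ by a constant times $(1+\|h\|_{\tilde r-1})$, whence the polynomial factor $(1+\|h\|_{\tilde r-1})^{r+1}$. The assumption $\tilde r\geq 5$ ensures that the Sobolev norms demanded by Proposition~\ref{prop:borne sur epsilone mieux} and Proposition~\ref{prop:estimation terme quadratique} are controlled by $\|\cdot\|_{\tilde r-1}$. The second piece of $\mathcal{Q}_\hbar$ contains the difference $|\e_\hbar(M)|^{-2}-|\e_\hbar(F_h)|^{-2}$, which I would rewrite as
\[
\frac{(\e_\hbar(F_h)-\e_\hbar(M))\overline{\e_\hbar(F_h)}+\e_\hbar(M)\overline{\e_\hbar(F_h)-\e_\hbar(M)}}{|\e_\hbar(M)|^{2}|\e_\hbar(F_h)|^{2}},
\]
and then recognise $\e_\hbar(F_h)-\e_\hbar(M)=\hat{\mathcal{V}}(k)\,\Lambda(h)(v_1,k)$ from the definition of $\Lambda$. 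The resulting triple product of two double differences and one $\Lambda(h)$ is precisely the structure controlled by~\eqref{eq:ix}.

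For the identity on $\mathcal{L}_\hbar$, the point is that $|\e_\hbar(M,k,v_1)|^{-2}$ is independent of $g$, so $\mathcal{L}_\hbar$ is formally selfadjoint on $L^2$; commuting $(\nabla-\tfrac{v}{2})^r$ through $\mathcal{L}_\hbar$ one copy at a time produces commutator terms in which the extra derivative lands on $\sqrt{M_2}$ or on the dielectric, each such term fits the template of~\eqref{eq:viii} with one test function of order at most $r-1$, giving the error $O(\|g\|_{\hbar,r-1}\|f\|_{\hbar,r})$. For the third estimate, I would telescope
\[
\mathcal{Q}_\hbar(g,h)-\mathcal{Q}_\hbar(\tilde g,\tilde h)=\mathcal{Q}_\hbar(g-\tilde g,h)+\big[\mathcal{Q}_\hbar(\tilde g,h)-\mathcal{Q}_\hbar(\tilde g,\tilde h)\big],
\]
apply the first estimate to $\mathcal{Q}_\hbar(g-\tilde g,h)$, and decompose the bracket into a piece linear in $h-\tilde h$ through $\DDelta(\tilde g(h-\tilde h)_*)$ plus a piece capturing $|\e_\hbar(F_h)|^{-2}-|\e_\hbar(F_{\tilde h})|^{-2}$ by the same telescoping as above with $h$ replaced by $h-\tilde h$; all pieces reduce to~\eqref{eq:viii} or~\eqref{eq:ix}. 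The main obstacle is the bookkeeping of derivatives on $|\e_\hbar(F_h)|^{-2}$: at each order one must verify that, after expanding all chain-rule contributions and re-expressing dielectric differences via $\Lambda$, the surviving terms still fit the template of~\eqref{eq:viii} or~\eqref{eq:ix} with exactly one function carrying $r$ derivatives in the $\|\cdot\|_{\hbar,r}$-norm and the remaining factors estimated in $\|\cdot\|_{\tilde r-1}$.
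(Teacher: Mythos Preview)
Your overall architecture (Leibniz distribution, then close each piece with \eqref{eq:viii} or \eqref{eq:ix}, Fa\`a di Bruno on the dielectric, telescoping for the Lipschitz estimate) matches the paper's. But there is a genuine gap: you never say what happens when the $v_1$-derivatives fall on the weight $\hat{\mathcal V}(k)^2\delta_{k\cdot(v_2-v_1-\hbar k)}$. This factor \emph{does} depend on $v_1$, and a naive $\partial_{v_1}$ produces $k\,\delta'_{k\cdot(v_2-v_1-\hbar k)}$, which you cannot absorb into \eqref{eq:viii}--\eqref{eq:ix}. (Your remark that derivatives fall on $\sqrt{M_2}$ is also off: $\sqrt{M_2}$ is independent of $v_1$.)

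The paper fixes this by first passing to the scattering-angle variable
\[
k=\frac{|v_1-v_2|}{2\hbar}\Bigl(\sigma-\tfrac{v_1-v_2}{|v_1-v_2|}\Bigr),\qquad \hbar^{-2}\delta_{k\cdot(v_2-v_1-\hbar k)}\,\ud k = 4\hbar^{d-1}|v_1-v_2|^{d-2}\,\delta_{|\sigma|=1}\,\ud\sigma,
\]
so that the constraint surface $\{|\sigma|=1\}$ no longer depends on $(v_1,v_2)$, and then integrates by parts using the \emph{combined} operator $(\nabla_1+\nabla_2)$ rather than $\nabla_1$. The point is that $(\nabla_1+\nabla_2)k(v_1,v_2,\sigma)=0$, hence $(\nabla_1+\nabla_2)$ annihilates $\hat{\mathcal V}(k)^2$ and $|v_1-v_2|^{d-2}$ and preserves the $\DDelta$-structure through the identity
\[
\DDelta\!\left(\tfrac{(\nabla+\frac{v}{2})^r F}{\sqrt{M}}\right)=(\nabla_1+\nabla_2)^r\,\DDelta\!\left(\tfrac{F}{\sqrt M}\right),\qquad
(\nabla_1+\nabla_2)\DDelta(gh_*\sqrt{MM_*})=\DDelta\!\Bigl[\bigl((\nabla-\tfrac v2)g\,h_*+g(\nabla_*-\tfrac{v_*}{2})h_*\bigr)\sqrt{MM_*}\Bigr].
\]
After this integration by parts, the Leibniz sum runs only over $|\e_\hbar|^{-2}$ and the $\DDelta(gh_*)\sqrt{M_1M_2}$ factor, exactly the bookkeeping you describe; the $\sigma$-trick is what makes that bookkeeping legitimate. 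You should insert this step before distributing derivatives.
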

\begin{proof}
	As we want perform integration by part, it will be useful to perform the change of variable $k\mapsto\sigma$ where
	\[k =: \tfrac{|v_1-v_2|}{2\hbar}\left[\sigma - \tfrac{v_1-v_2}{|v_1-v_2|}\right],\]
	with the Jacobian
	\begin{equation*}
	\hbar^{-2}{\delta_{ k\cdot(v_2-v_1-\hbar k)}}\ud{k} = \frac{|v_2-v_1|^{d-2}}{2^d\hbar^{d+1}}\delta_{|\sigma|= 1}\ud{\sigma}.
	\end{equation*}
	With this variables, the post-collisional velocities are
	\[v_1' = \frac{v_1+v_2}{2}+\frac{|v_1-v_2|}{2}\sigma,~v_2' = \frac{v_1+v_2}{2}-\frac{|v_1-v_2|}{2}\sigma.\]
	
	Differentiating the function $(\sigma,v_1,v_2)\mapsto k$, one has
	\[\left(\nabla_{v_1}+\nabla_{v_2}\right)k(v_1,v_2,\sigma) = \frac{1}{2\hbar}\left(\frac{(v_1-v_2)\cdot\sigma}{|v_1-v_2|}-{\rm Id}+\frac{(v_2-v_1)\cdot\sigma}{|v_2-v_1|}+\rm{Id}\right)=0.
	\]
	
	Hence $(\nabla_1+\nabla_2) g(v_1') = \nabla g(v_1')$ and
	\begin{gather*}\DDelta\left(\frac{\left(\nabla+\frac{v}{2}\right)g}{\sqrt{M}}\right)=\DDelta\left(\nabla\frac{g}{\sqrt{M}}\right) = (\nabla_1+\nabla_2)\DDelta\left(\frac{g}{\sqrt{M}}\right),\\
	(\nabla_1+\nabla_2)\DDelta\left({gh_*}{\sqrt{MM_*}}\right)
	= \DDelta\left(\left[\left(\nabla-\frac{v}{2}\right)gh_*+g\left(\nabla_*-\frac{v_*}{2}\right)h_*\right]\sqrt{MM_*}\right).\end{gather*}
	
	For $f,g,h,F = M+\sqrt{M}p$
	\begin{align*}
	&(-1)^r \int\DDelta\left(\frac{\left(\nabla+\frac{v}{2}\right)^r\left(\nabla-\frac{v}{2}\right)^rf}{\sqrt{M}}\right)\frac{\V(k)^2|v_1-v_2|^{d-2}}{|\e(F,k,v_1)|^2}\DDelta\left({gh_*}\right)\sqrt{M_1M_2}\ud{v_1}\ud{v_2}\ud{\sigma}\\
	=&(-1)^r\int(\nabla_1+\nabla_2)^r\DDelta\left(\frac{\left(\nabla-\frac{v}{2}\right)^rf}{\sqrt{M}}\right)\frac{\V(k)^2|v_1-v_2|^{d-2}}{|\e(F,k,v_1)|^2}\DDelta\left({gh_*}\right)\sqrt{M_1M_2}\ud{v_1}\ud{v_2}\ud{\sigma}\\
	=&\sum_{r_1+r_2+r_3=r}\binom{r}{r_1,r_2,r_3} \int\DDelta\left(\frac{\left(\nabla-\frac{v}{2}\right)^rf}{\sqrt{M}}\right)\V(k)^2|v_1-v_2|^{d-2}\frac{\partial^{r_1}|\e(F,k,v_1)|^{-2}}{(\partial v_1)^{r_1}}\\
	&\hspace{6cm}\times\DDelta\left(\left(\nabla-\frac{v}{2}\right)^{r_2}g\left(\nabla_*-\frac{v_*}{2}\right)^{r_3}h_*\right)\sqrt{M_1M_2}\ud{v_1}\ud{v_2}\ud{\sigma}
	\end{align*}
	
	Using the Faà di Bruno formula, and denoting $\mathfrak{c}(z) = \overline{z}$ the conjugation on $\mathbb{C}$, and $F = M+\sqrt{M} p$, $\tilde{F} = M+\sqrt{M} \tilde{p}$
	\begin{align*}
	\left|\frac{\partial^{\gamma_j}|\e(F,k,v_1)|^{-2}}{(\partial v_1)^{\gamma_j}}-\frac{\partial^{\gamma_j}|\e(\tilde{F},k,v_1)|^{-2}}{(\partial v_1)^{\gamma_j}}\right|\lesssim_r \sum_{n= 1}^{r_1}\sum_{\substack{\gamma_1+\cdots+\gamma_n\\\gamma_i>0\\ \sigma_i=\pm1}}\left|\frac{\prod_{j= 1}^n\mathfrak{c}^{\sigma_i}\!\!\left(\frac{\partial^{\gamma_j}\e(F,k,v_1)}{(\partial v_1)^{\gamma_j}}\right)}{|\e_\hbar(F,k,v_1)|^{r_1+2}}-\frac{\prod_{j= 1}^n\mathfrak{c}^{\sigma_i}\!\!\left(\frac{\partial^{\gamma_j}\e(\tilde{F},k,v_1)}{(\partial v_1)^{\gamma_j}}\right)}{|\e_\hbar(\tilde{F},k,v_1)|^{r_1+2}}\right|
	\end{align*}
	
	We recall that  
	\begin{multline*}\frac{\partial^{\gamma_j}\e(F,k,v)}{(\partial v_1)^{\gamma_j}}-\frac{\partial^{\gamma_j}\e(\tilde{F},k,v)}{(\partial v_1)^{\gamma_j}} = {{\V}(k)}\int\frac{\nabla^{\gamma_j}\left[\sqrt{M}(p-\tilde{p})\right](v_*)-\nabla^{\gamma_j}\left[\sqrt{M}(p-\tilde{p})\right](v_*-\hbar k)}{\hbar k\cdot(v-v_*-\hbar k)+i0}\ud{v_*}\\
	={\V(k)\Lambda\left(\left(\nabla-\frac{v}{2}\right)^{\gamma_j}(p-\tilde{p})\right)}		\end{multline*}
	
	We deduce that
	\begin{multline*}
	\left|\frac{\partial^{r}|\e(F,k,v_1)|^{-2}}{(\partial v_1)^{\gamma_j}}-\frac{\partial^{r}|\e(\tilde{F},k,v_1)|^{-2}}{(\partial v_1)^{r}}\right|\\
	\lesssim_r |\V(k)|\left(1+\left\|p\right\|_{\max(4,r-1)}+\left\|\tilde{p}\right\|_{\max(4,r-1)}\right)^r\left(\sum_{s=0}^2\left|\Lambda\left(\left(\nabla-\frac{v}{2}\right)^{r-r}(p-\tilde{p})\right)\right|+\left\|\tilde{p}-p\right\|_{r-1}\right).
	\end{multline*}
	
	Using that $r_1+r_2+r_3 = r$, and Proposition \ref{prop:estimation terme quadratique} denoting $\tilde{r} := \max\{5,r\}$
	\begin{multline*}
	\int\DDelta\left(\frac{\left(\nabla-\frac{v}{2}\right)^r\left(-\nabla-\frac{v}{2}\right)^rf}{\sqrt{M}}\right)\frac{\V(k)^2\delta_{k\cdot(v_2-v_1-\hbar k)}}{\hbar^2}\\
	\times\left[|\e(F,k,v_1)|^{-2}-|\e(\tilde{F},k,v_1)|^{-2}\right]\DDelta\left(\frac{gh_*}{\sqrt{MM_*}}\right)M_1M_2\ud{v_1}\ud{v_2}\ud{k}\\		
	\lesssim\left(1+\left\|p\right\|_{\tilde{r}-1}+\left\|\tilde{p}\right\|_{\tilde{r}-1}\right)^{r+1}\left\|f\right\|_{\hbar,r}\left(\left\|g\right\|_{\tilde{r}-1}
	\left\|h\right\|_{r,\hbar}+\left\|h\right\|_{\tilde{r}-1}
	\left\|g\right\|_{r,\hbar}+\left\|g\right\|_{\tilde{r}-1}
	\left\|h\right\|_{\tilde{r}-1}\|p-\tilde{p}\|_{r,\hbar}\right)
	\end{multline*}
	
	One can conclude by choosing suitable $p$ and $h$.
\end{proof}

We introduce the space $ \mathcal{C}^0_b(\mathbb{R}^+,\mathcal{H}_r)\cap L^2(\mathbb{R}^+,\mathcal{H}_\hbar^r)$, and define the sets
\[\mathcal{E}_{r,\hbar}(\kappa,\eta)=\left\{h\in\mathcal{H}^r_\hbar\Big\vert \|h(t=0)\|_r\leq \kappa{\eta},~\|h\|_{L^\infty_t(\mathcal{H}^r)}+\|h\|_{L^2(\mathcal{H}_\hbar^r)}\leq \eta\right\}\]
\[\mathcal{E}_{r,\hbar}(\kappa,\eta,h_0)=\left\{h\in\mathcal{E}_\hbar(\eta),~h(t=0)=h_0\right\}\]
where $\kappa_r\in(0,1)$ is fix constant depending only on $r$ and $\mathcal{V}$, and $\|h_0\|_r\leq \kappa_r{\eta}$. The constant $\kappa_r$ will be fixed later.

\begin{prop}\label{prop:condition picard}
	We fix the dimension $d\geq 2$. We define $\mathcal{F}_\hbar$ the application which associate to $h\in\mathcal{E}_{r,\hbar}$ the solution $g$ of the equation
	\begin{equation}
	\left\{\begin{split}
	\partial_t g(t) +\mathcal{L}_\hbar g(t) &= \mathcal{Q}_\hbar(g(t),h(t))\\
	g(t=0)&=h(t=0).
	\end{split}\right.
	\end{equation}
	
	Then for $r>5$, there exist three constants $\kappa_r\in(0,1)$, $\eta_0>0$, $\hbar_0>0$ such that for $\kappa\in(0,\kappa_r)$, $\eta\in(0,\eta_0)$, and $\hbar\in(0,\hbar_0)$,
	\begin{itemize}
		\item the sets $\mathcal{E}_{r,\hbar}(\kappa,\eta)$ and $\mathcal{E}_{r,\hbar}(\kappa,`\eta,h_0)$ are stable under the action of $\mathcal{F}_\hbar$,
		\item the application $\mathcal{F}_\hbar$ is continuous on $\mathcal{E}_{r,\hbar}(\kappa,\eta)$ and contractive on $\mathcal{E}_{r,\hbar}(\kappa,\eta,h_0)$.
	\end{itemize}
\end{prop}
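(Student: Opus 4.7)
\textbf{Existence of $\mathcal{F}_\hbar(h)$.} The strategy is the Guo--Duerinckx--Winter scheme adapted to the $\hbar$-dependent norms: treat \eqref{eq:QLB} as a linearized equation in $g$ with frozen parameter $h$, then close an energy estimate in $\mathcal{H}_\hbar^r$. For $h\in\mathcal{E}_{r,\hbar}(\kappa,\eta)$, the map $g\mapsto\mathcal{Q}_\hbar(g,h)$ is bounded on $\mathcal{H}^r$ by a power of $1+\|h\|_{\tilde{r}-1}$ (Proposition \ref{prop:controle dans les sobolev des termes quadratique}), so a Galerkin approximation, or Hille--Yosida plus bounded perturbation of the self-adjoint operator $-\mathcal{L}_\hbar$, produces a unique $g\in\mathcal{C}^0_b(\R^+,\mathcal{H}^r)$ with $g(t=0)=h(t=0)$.

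\textbf{A priori estimate.} I would test the equation against $(-\nabla-\tfrac{v}{2})^r(\nabla-\tfrac{v}{2})^r g$ and use the commutator identity of Proposition \ref{prop:controle dans les sobolev des termes quadratique} followed by the coercivity of Proposition \ref{prop:dissipation estimates} applied to $(\nabla-\tfrac{v}{2})^r g$, which gives
\begin{equation*}
\tfrac{1}{2}\tfrac{d}{dt}\|g\|_r^2 + c\|(1-\pi_0)(\nabla-\tfrac{v}{2})^r g\|_\hbar^2 \lesssim \|g\|_{\hbar,r-1}\|g\|_{\hbar,r}+(1+\|h\|_{\tilde{r}-1})^{r+1}\bigl(\|g\|_{\hbar,r}\|h\|_{\tilde{r}-1}+\|g\|_{\tilde{r}-1}\|h\|_{\hbar,r}\bigr)\|g\|_{\hbar,r}.
\end{equation*}
Under $\|h\|_r\leq\eta$ with $\eta<\eta_0$ small, the right-hand side is absorbed into $\tfrac{c}{4}\|g\|_{\hbar,r}^2$ plus a lower-order contribution in $\|g\|_{\hbar,r-1}$, to be handled by induction on $r$.

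\textbf{Macroscopic control (main obstacle).} The dissipation only sees the microscopic part $(1-\pi_0)g$; the $(d+2)$-dimensional block $\pi_0 g$ lies in the kernel of $\mathcal{L}_\hbar$ and must be recovered separately. This is the hard step. Following Section 3 of \cite{DW}, I would project the equation onto each $w_j$, $j=0,\dots,d+1$, obtaining a closed transport-type system for the hydrodynamic moments whose source terms are controlled by the microscopic dissipation and by $\mathcal{Q}_\hbar(g,h)$, and then combine these bounds with $\|g\|_r^2$ into a modified energy $\mathcal{E}_r(g)\simeq\|g\|_r^2$ satisfying
\begin{equation*}
\tfrac{d}{dt}\mathcal{E}_r(g) + c'\|g\|_{\hbar,r}^2 \leq 0,
\end{equation*}
uniformly in $\hbar$. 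The $\hbar$-uniformity of the construction is made possible by Proposition \ref{prop: decroissance des normes}, which monotonically compares the norms $\|\cdot\|_\hbar$, together with Propositions \ref{prop: traitement K}--\ref{prop:traitement tilde(K)} which give $\hbar$-independent limits of the relevant kernels; the constraint $r\geq 5$ enters through the interpolation $\|g\|_{\tilde{r}-1}\lesssim\|g\|_r$ needed to absorb the cubic nonlinearity.

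\textbf{Stability and contraction.} Integrating the previous display, $\mathcal{E}_r(g(t))+c'\int_0^t\|g\|_{\hbar,r}^2\leq\mathcal{E}_r(h(0))\lesssim\kappa^2\eta^2$, so choosing $\kappa_r$ small gives $\|g\|_{L^\infty_t\mathcal{H}^r}+\|g\|_{L^2_t\mathcal{H}_\hbar^r}\leq\eta$, hence stability of $\mathcal{E}_{r,\hbar}(\kappa,\eta)$ and, since the initial datum is preserved by construction, of $\mathcal{E}_{r,\hbar}(\kappa,\eta,h_0)$. For contraction on the latter, $g_{12}:=\mathcal{F}_\hbar(h_1)-\mathcal{F}_\hbar(h_2)$ has $g_{12}(t=0)=0$ and satisfies
\begin{equation*}
\partial_t g_{12}+\mathcal{L}_\hbar g_{12}=\mathcal{Q}_\hbar(g_{12},h_1)+\bigl(\mathcal{Q}_\hbar(\mathcal{F}_\hbar(h_2),h_1)-\mathcal{Q}_\hbar(\mathcal{F}_\hbar(h_2),h_2)\bigr);
\end{equation*}
the same energy scheme combined with the Lipschitz bound of Proposition \ref{prop:controle dans les sobolev des termes quadratique} yields $\|g_{12}\|_{L^\infty_t\mathcal{H}^r}+\|g_{12}\|_{L^2_t\mathcal{H}_\hbar^r}\lesssim\eta\bigl(\|h_1-h_2\|_{L^\infty_t\mathcal{H}^r}+\|h_1-h_2\|_{L^2_t\mathcal{H}_\hbar^r}\bigr)$, a strict contraction for $\eta$ small. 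Continuity on $\mathcal{E}_{r,\hbar}(\kappa,\eta)$ follows from the same estimate plus the extra term $\|h_1(t=0)-h_2(t=0)\|_r$ produced by the differing initial data.
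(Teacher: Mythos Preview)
Your overall scheme---energy estimate against $(-\nabla-\tfrac{v}{2})^{r'}(\nabla-\tfrac{v}{2})^{r'}g$, use of the commutator identity and dissipation from Propositions \ref{prop:controle dans les sobolev des termes quadratique} and \ref{prop:dissipation estimates}, then stability and contraction via the Lipschitz bound---matches the paper's proof. The difference lies in the step you flag as the ``main obstacle'': the control of the macroscopic part.

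You propose to recover $\pi_0 g$ by projecting onto each $w_j$ and deriving a ``transport-type system for the hydrodynamic moments'', then building a modified energy. This is the Guo machinery for \emph{spatially inhomogeneous} equations, where the hydrodynamic fields genuinely evolve by transport in $x$. Here the equation is spatially homogeneous, so that machinery is both unnecessary and ill-suited (there is no transport). The paper handles the macroscopic part in two lines: at level $r'=0$ one has $\pi_0 g(t)=0$ by the conservation laws (the $\DDelta$-structure of $\mathcal{L}_\hbar$ and $\mathcal{Q}_\hbar(g,h)$ makes both orthogonal to $w_0,\dots,w_{d+1}$), and at level $r'\geq 1$ Proposition \ref{prop:control pi0} gives
\[
\bigl\|\pi_0\bigl[(\nabla-\tfrac{v}{2})^{r'}g\bigr]\bigr\|_\hbar\lesssim \|g\|_{\hbar,r'-1},
\]
a strictly lower-order term. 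The paper then sums the differential inequalities with weights $(2/C)^{r'}$ so that each lower-order remainder is absorbed by the dissipation at the previous level---this is exactly the ``induction on $r$'' you mention, implemented as a weighted sum. So what you call the hard step is in fact immediate once you observe Proposition \ref{prop:control pi0}; no modified energy is needed. The rest of your argument (stability, continuity in time, contraction via Gr\"onwall with the Lipschitz estimate) is essentially the paper's Steps 2--3.
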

\begin{proof}
	\step{1} The stability of $\mathcal{E}(\eta)$.
	
	Fix $h\in\mathcal{E}_{r,\hbar}(\kappa,\eta)$ and $g:=\mathcal{F}_\hbar(g)$. We have the following energy inequality.
	\begin{align*}
	\frac{d}{dt}\left\|\left(\nabla-\frac{v}{2}\right)^{r'}g(t)\right\|^2 &=2\int \left(-\nabla-\frac{v}{2}\right)^{r'}\left(\nabla-\frac{v}{2}\right)^{r'}g(t)\left(-\mathcal{L}_\hbar g(t)+\mathcal{Q}_\hbar(g(t),h(t))\right)\ud\\
	\leq& -2\int \left(\nabla-\frac{v}{2}\right)^{r'}g(t)\mathcal{L}_\hbar\left(\nabla-\frac{v}{2}\right)^{r'}g(t)+C\|g(t)\|_{\hbar,r'-1}^2\\
	&+C\left(1+\|h(t)\|_{\tilde{r}-1}\right)^{r'+1}\left(\|g(t)\|_{\hbar,r'}\|h(t)\|_{r'-1}+\|g(t)\|_{r'-1}\|h(t)\|_{\hbar,r'}\right)\|\left\|g(t)\right\|_{\hbar,r}\\
	\leq& -\frac{2}{C}\left\|\left(\nabla-\frac{v}{2}\right)^{r'}g(t)\right\|^2_{\hbar}+C\|g(t)\|_{r'-1}^2\\
	&+C\left(1+\|h(t)\|_{\tilde{r}-1}\right)^{r'+1}\left(\|g(t)\|_{\hbar,r'}\|h(t)\|_{r'-1}+\|g(t)\|_{r'-1}\|h(t)\|_{\hbar,r'}\right)\|\left\|g(t)\right\|_{\hbar,r}
	\end{align*}
	where we use that $\pi_0(g(t))= 0$, and Propositions \ref{prop:dissipation estimates} and \ref{prop:control pi0}.
	
	We deduce that
	\begin{multline*}
	\frac{d}{dt}\left\|\left(\nabla-\frac{v}{2}\right)^{r'}g(t)\right\|^2
	+\left(\frac{1}{C}-C\left(1+\|h(t)\|_{\tilde{r}-1}\right)^{r+1}\left(\|h(t)\|_{\tilde{r}-1}+\|g(t)\|_{\tilde{r}-1}\right)\right)\left\|\left(\nabla-\frac{v}{2}\right)^{r'}g(t)\right\|_{\hbar}^2\\
	\leq C\left(1+\|h(t)\|_{\tilde{r}-1}\right)^{r'+1}\|g(t)\|_{\tilde{r}-1}\left\|\left(\nabla-\frac{v}{2}\right)^{r'}h(t)\right\|^2_{\hbar}+C\|g(t)\|_{\hbar,r'-1}^2
	\end{multline*}
	Summing on $r'$,
	\begin{multline*}
	\frac{d}{dt}\sum_{r'=0}^r\tfrac{2^{r'}}{C^{r'}}\left\|\left(\nabla-\tfrac{v}{2}\right)^{r'}g(t)\right\|^2
	\\
	+\left(\tfrac{1}{2C}-C\left(1+\|h(t)\|_{\tilde{r}-1}\right)^{r+1}\left(\|h(t)\|_{\tilde{r}-1}+\|g(t)\|_{\tilde{r}-1}\right)\right)\sum_{r'=0}^r\frac{2^{r'}}{C^{r'}}\left\|\left(\nabla-\tfrac{v}{2}\right)^{r'}g(t)\right\|_{\hbar}^2\\
	\leq C\left(1+\|h(t)\|_{\tilde{r}-1}\right)^{r'+1}\|g(t)\|_{\tilde{r}-1}\sum_{r'=0}^r\tfrac{2^{r'}}{C^{r'}}\left\|\left(\nabla-\tfrac{v}{2}\right)^{r'}h(t)\right\|^2_{\hbar}
	\end{multline*}
	Let $T>0$ such that for all $t\in[0,T)$, $\|h(t)\|_r\leq \eta$. One can integrate the previous inequality: for $t\in[0,T)$
	\[\|g(t)\|^2_{r}+\left(\frac{1}{2C}-2C2^{r+1}\eta\right)\int_0^t\|g(s)\|_{\hbar,r}^2\ud{s}\leq \frac{C^r}{2^r}\left(\|g(0)\|_r^2+C2^{r+1}\eta \int_0^t\|g(s)\|_{\hbar,r}^2\ud{s}\right)\]
	
	Finally, for $\eta<\eta_0:=(4C^{2r+2})^{-1}$, $\kappa_r = C^{-(r+1)/2}$, we obtain
	\[\|g(t)\|^2_{r}+\frac{1}{4C}\int_0^t\|g(s)\|_{\hbar,r}^2\ud{s}\leq \frac{1}{4C} \eta^2\]
	
	By a boot-strap argument, we deduce that $g\in\mathcal{E}_\hbar(\eta)$.
	
	\step{2} Continuity in time of $g$:
	
	As $\|g(t)\|_{r,\hbar}$ is a $L^2(\mathbb{R}^+)$, it is bounded for almost all $t'\in\mathbb{R}^+$.
	\begin{align*}
	\frac{d}{dt}\bigg\|\left(\nabla-\frac{v}{2}\right)^{r'}&\left(g(t)-g(t')\right)\bigg\|^2  \leq -2\int \left(-\nabla-\frac{v}{2}\right)^{r'}\left(\nabla-\frac{v}{2}\right)^{r'}\left(g(t)-g(t')\right)\mathcal{L}_\hbar\left(g(t)-g(t')\right)\\
	&-2\int \left(-\nabla-\frac{v}{2}\right)^{r'}\left(\nabla-\frac{v}{2}\right)^{r'}\left(g(t)-g(t')\right)\mathcal{L}_\hbar g(t')\\
	&+C\left(1+\|h(t)\|_{\tilde{r}-1}\right)^{r'+1}\left(\|g(t)\|_{\hbar,r'}\|h(t)\|_{r'-1}+\|g(t)\|_{r'-1}\|h(t)\|_{\hbar,r'}\right)\|\left\|g(t)-g(t')\right\|_{\hbar,r}\\
	&\leq -\frac{1}{2C}\left\|\left(\nabla-\frac{v}{2}\right)^{r'}\left(g(t)-g(t')\right)\right\|^2_\hbar +4C^2\|g(t')\|^2_{\hbar,r} +C\|g(t)-g(t')\|^2_{\hbar,r'-1}\\
	&+\tilde{C} \eta^2\left(\|h(t)\|_{\hbar,r}^2+\|h(t)\|_{\hbar,r}^2\right)\\
	\end{align*}
	
	Summing on $r'\in[0,r]$, we get
	\[\frac{d}{dt}\left\|g(t)-g(t')\right\|_r^2  \lesssim \|g(t')\|^2_{\hbar,r}+\eta^2\left(\|h(t)\|_{\hbar,r}^2+\|h(t)\|_{\hbar,r}^2\right).\]
	We deduce that $g$ is continuous for almost all $t'\in \mathbb{R}^+$, and so on $\mathbb{R}^+$.
	
	\step{3} Continuity and contractility of $\mathcal{F}_\hbar$.
	\begin{align*}
	\frac{d}{dt}\bigg\|\left(\nabla-\frac{v}{2}\right)^{r'}&\left(g(t)-\tilde{g}(t)\right)\bigg\|^2 =-2\int \left(-\nabla-\frac{v}{2}\right)^{r'}\left(\nabla-\frac{v}{2}\right)^{r'}\left(g(t)-\tilde{g}(t)\right)\mathcal{L}_\hbar\left(g(t)-\tilde{g}(t)\right)\\
	+&2\int \left(-\nabla-\frac{v}{2}\right)^{r'}\left(\nabla-\frac{v}{2}\right)^{r'}\left(g(t)-\tilde{g}(t)\right)\left(\mathcal{Q}_\hbar(g(t),h(t))-\mathcal{Q}_\hbar(\tilde{g}(t),\tilde{h}(t))\right)\\
	\leq -&\frac{1}{C}\left\|\left(\nabla-\frac{v}{2}\right)^{r'}\left(g(t)-\tilde{g}(t)\right)\right\|_{\hbar}+C\left\|\left(g(t)-\tilde{g}(t)\right)\right\|^2_{\hbar,r'-1}\\
	+&C\left(1+\|h(t)\|_{\tilde{r}}+\|\tilde{h}(t)\|_{\tilde{r}}\right)^{r+1}\Big(\left(\|h\|_{\tilde{r}}+\|\tilde{h}\|_{\tilde{r}}\right)\|g(t)-\tilde{g}(t)\|_{\hbar,r}+\|g-\tilde{g}\|_{\tilde{r}}\left(\|h\|_{\hbar,r}+\|\tilde{h}\|_{\hbar,r}\right)\\
	+&\left(\|g\|_{\hbar,r}+\|\tilde{g}\|_{\hbar,r}\right)\|h(t)-\tilde{h}(t)\|_{\tilde{r}}+\left(\|g\|_{\tilde{r}}+\|\tilde{g}\|_{\tilde{r}}\right)\|h(t)-\tilde{h}(t)\|_{\hbar,r}\Big)\left\|g(t)-\tilde{g}(t)\right\|_{\hbar,r}.
	\end{align*}
	
	Summing on $r'\in[0,r]$ and using the polarization inequalities, and that $g,\tilde{g},h,\tilde{h}\in\mathcal{E}_\hbar(\eta)$
	\begin{multline*}
	\frac{d}{dt}\|g(t)-\tilde{g}(t)\|_{r}^2+\|g(t)-\tilde{g}(t)\|_{\hbar,r}^2\lesssim_r \left(\|h(t)\|^2_{\hbar,r}+\|\tilde{h}(t)\|^2_{\hbar,r}\right)\|g(t)-\tilde{g}(t)\|_{r}^2 + \eta^2\|h(t)-\tilde{h}(t)\|_{\hbar,r}^2\\
	+\left(\|g\|^2_{\hbar,r}+\|\tilde{g}\|^2_{\hbar,r}\right)\|h(t)-\tilde{h}(t)\|^2_{{r}}
	\end{multline*}
	
	We begin by treat the part $\|g(t)-\tilde{g}(t)\|_{r}^2$. Using the Gronwall's inequalities,
	\begin{align*}
	\|g(t)-\tilde{g}(t)\|_{r}^2 \leq& \|g(0)-\tilde{g}(0)\|_{r}^2e^{C\int_0^t\|h\|^2_{\hbar,r}+\|\tilde{h}\|^2_{\hbar,r}}\\
	+&\int_0^t2e^{C\int_s^t\|h\|^2_{\hbar,r}+\|\tilde{h}\|^2_{\hbar,r}}\left(\eta^2\|h(s)-\tilde{h}(s)\|^2_{{\hbar,r}}+\left(\|g(s)\|^2_{\hbar,r}+\|\tilde{g}(s)\|^2_{\hbar,r}\right)\|h(s)-\tilde{h}(s)\|^2_{{r}}\right)\ud{s}\\
	\leq& C\|h(0)-\tilde{h}(0)\|_{r}^2 +C\eta^2 \left[\|h-\tilde{h}\|^2_{L^\infty_t(\mathcal{H}^r)}+\|h-\tilde{h}\|^2_{L^2_t(\mathcal{H}^r_\hbar)}\right].
	\end{align*}
	
	One get now the estimation of $\|g(t)-\tilde{g}(t)\|_{\hbar,r}^2$:
	\begin{align*}\int_0^\infty \|g(t)-\tilde{g}(t)\|_{\hbar,r}^2\ud{t} &\lesssim\int_0^\infty  \left[\left(\|h(t)\|^2_{\hbar,r}+\|\tilde{h}(t)\|^2_{\hbar,r}\right)\left(\|h(0)-\tilde{h}(0)\|_{r}^2 +\eta^2 \|h-\tilde{h}\|_{\hbar,rL^2_t(\mathcal{H}^r_\hbar)\cap L^\infty_t(\mathcal{H}^r)}^2\right)\right.\\
	&\hspace{40pt}\left. + \eta^2\|h(t)-\tilde{h}(t)\|_{\hbar,r}^2 +\left(\|g\|^2_{\hbar,r}+\|\tilde{g}\|^2_{\hbar,r}\right)\|h(t)-\tilde{h}(t)\|^2_{{r}}\right]\ud{t}\\
	&\lesssim \eta^2 \left[\|h-\tilde{h}\|^2_{L^\infty_t(\mathcal{H}^r)}+\|h-\tilde{h}\|^2_{L^2_t(\mathcal{H}^r_\hbar)}\right].\end{align*}
	
	We conclude that 
	\[\|g-\tilde{g}\|_{L^2_t(\mathcal{H}^r_\hbar)\cap L^\infty_t(\mathcal{H}^r)}\lesssim \|h(0)-\tilde{h}(0)\|_r+\eta\|h-\tilde{h}\|_{L^2_t(\mathcal{H}^r_\hbar)\cap L^\infty_t(\mathcal{H}^r)}.\]
	This conclude the proof.
\end{proof}

We can now proof Theorem \ref{thm:Cauchy theory}: if $\|g_0\|<\kappa_r\eta$, one can perform a Picard's fix point argument in $\mathcal{E}_\hbar(\kappa,\eta,h_0)$. In addition, $M(v)+\sqrt{M}(v) g(t,v)$ is non-negative for any $(t,v)$ by a maximum principle on the equation \eqref{eq:equation de Lenard Balescu Quantique}.

\section{Semi-classical (or grazing collision) limit}\label{sec:semi classical limit}

We recall that for $g,h$ two test functions, and $H:=M+\sqrt{M}h$,

\begin{gather*}
\mathcal{L}_0 g(v_1):=\left(\nabla-\frac{v_1}{2}\right)\cdot\int B(M,v_1,v_2) \left[\sqrt{M_2}\left(\nabla_1+\frac{v_1}{2}\right)g_1-\sqrt{M_1}\left(\nabla_2+\frac{v_2}{2}\right)g_2\right]\sqrt{M_2} \ud{v_2}\\
\begin{split}
\mathcal{Q}_0(g,h)&(v_1):=-\left(\nabla-\frac{v_1}{2}\right)\cdot\int B(H,v_1,v_2) \left[h_2\nabla g_1+g_2\nabla h_1-h_1\nabla g_2-g_1\nabla h_2\right]\sqrt{M_2} \ud{v_2}\\
+&\left(\nabla-\frac{v_1}{2}\right)\cdot\int \left(B(H,v_1,v_2)-B(M,v_1,v_2)\right) \left[\sqrt{M_2}\left(\nabla_1+\frac{v_1}{2}\right)g_1-\sqrt{M_1}\left(\nabla_2+\frac{v_2}{2}\right)g_2\right]\sqrt{M_2} \ud{v_2}\nonumber
\end{split}\\
B(v_1,v_2,H):=c_d\int \frac{\V(k)^2\delta_{ k\cdot(v_2-v_1)}}{|\e_0(H,k,v_1)|^2}k\otimes k \ud{k}
\end{gather*}

\begin{prop}\label{prop:convergence operateur}
	We fix the dimension $d\geq 2$. For $f,g$ and $h$ three test functions,
	\begin{align}
	\left|\int f\left(\mathcal{L}_\hbar-\mathcal{L}_0\right)\!g \right|&\lesssim \hbar\|f\|_\hbar\|g\|_5\\
	\left|\int f\left(\mathcal{Q}_\hbar (g,h)-\mathcal{Q}_0(g,h)\right)\right| &\lesssim
	\hbar\|f\|_\hbar\|g\|_5 \|h\|_5(1+\|h\|_5)^4 \end{align}
\end{prop}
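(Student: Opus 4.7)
The plan is to Taylor-expand each $\hbar$-dependent ingredient of $\mathcal{L}_\hbar$ (resp.\ $\mathcal{Q}_\hbar$) around $\hbar = 0$, identify the leading term with $\mathcal{L}_0$ (resp.\ $\mathcal{Q}_0$), and bound the remainders by $O(\hbar)$ via a Cauchy--Schwarz scheme of the kind used in Proposition~\ref{prop:estimation terme quadratique}. Using the symmetries $(v_1,v_2)\leftrightarrow(v_2,v_1)$ and $(v_1,v_2,k)\leftrightarrow(v_1',v_2',-k)$---both preserving the measure $\V^2(k)|\e_\hbar(M,k,v_1)|^{-2}\delta_{k\cdot(v_2-v_1-\hbar k)}\,\ud k$ as in \eqref{e:decompo quadra 1}---I recast
\[\int f\mathcal{L}_\hbar g \;=\; \frac{c_d}{4\hbar^2}\iiint \DDelta(f\sqrt{M_*})\,\DDelta(g\sqrt{M_*})\,\frac{\V^2(k)\,\delta_{k\cdot(v_2-v_1-\hbar k)}}{|\e_\hbar(M,k,v_1)|^2}\,\ud k\,\ud{v_1}\,\ud{v_2}.\]
Integrating $\int f\mathcal{L}_0 g$ by parts and symmetrising yields the analogous expression where $\hbar^{-1}\DDelta(\cdot\sqrt{M_*})$ is replaced by $-k\cdot T(\cdot;v_1,v_2) := -k\cdot[\sqrt{M_2}(\nabla_1+v_1/2)(\cdot)_1 - \sqrt{M_1}(\nabla_2+v_2/2)(\cdot)_2]$, the dielectric $\e_\hbar(M)$ by $\e_0(M)$, and $\delta_{k\cdot(v_2-v_1-\hbar k)}$ by $\delta_{k\cdot(v_1-v_2)}$.

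A second-order Taylor expansion of $\DDelta(f\sqrt{M_*})$ at $\hbar=0$ yields $\hbar^{-1}\DDelta(f\sqrt{M_*}) = -k\cdot T(f) + \hbar\,S_f(v_1,v_2,k,\hbar)$, where $S_f$ is pointwise dominated by $|k|^2$ times a weighted sum of second velocity derivatives of $f\sqrt{M_*}$ (and the on-shell identity $k\cdot(v_1-v_2)=-\hbar|k|^2$ has been used to convert $v_2$-weights into $v_1$-weights up to an $O(\hbar)$ error). Proposition~\ref{prop:borne sur epsilone mieux}, applied with $\Phi=M$, gives $|\e_\hbar(M)|^{-2}-|\e_0(M)|^{-2}=O(\hbar)$; and the difference of the two delta measures is absorbed by the change of variables $v_i\mapsto v_i\mp\hbar k/2$, which contributes only smooth factors $1+O(\hbar|k|\langle v\rangle)$ to the remaining integrand. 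The difference $\int f(\mathcal{L}_\hbar-\mathcal{L}_0)g$ is thus a finite sum of terms, each carrying an explicit factor $\hbar$: Cauchy--Schwarz in the weighted inner product of Proposition~\ref{prop:form equivalent norm} bounds the $\hbar^{-1}\DDelta(f\sqrt{M_*})$-factor by $\|f\|_\hbar$, and the remaining factor is controlled by $\|g\|_5$ via Proposition~\ref{prop:estimation terme quadratique}, exploiting the decay $|\V(k)|^2\lesssim(1+|k|)^{-s}$ with $s>d+3$ to absorb the extra $|k|^2$ produced by $S_g$ (the value $5$ leaves regularity margin to convert the two pointwise velocity derivatives to the Sobolev norm by embedding in the low dimensions $d=2,3$).

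For $\mathcal{Q}_\hbar-\mathcal{Q}_0$ the same blueprint applies, with $\DDelta(g\sqrt{M_*})$ replaced by $\DDelta(gh_*)$ and the dielectric factor by $|\e_\hbar(F_h)|^{-2}$: Proposition~\ref{prop:borne sur epsilone mieux} still gives $|\e_\hbar(F_h)|^{-2}-|\e_0(F_h)|^{-2}=O(\hbar)(1+\|h\|_2)$, the Fa\`a di Bruno expansion performed as in the proof of Proposition~\ref{prop:controle dans les sobolev des termes quadratique} produces the combinatorial factor $(1+\|h\|_5)^4$, and the $\Lambda$-type nonlinear terms are absorbed via estimate~\eqref{eq:ix}. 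The main obstacle throughout is the delicate bilinear balance: one must retain a visible $\hbar$ factor while keeping $\hbar^{-1}\DDelta(f\sqrt{M_*})$ inside $\|f\|_\hbar$, which forces the second-derivative remainder $S_g$ (with its extra $|k|^2$) to be absorbed against $\|g\|_5$ via the same parallel/perpendicular decomposition of $k$ relative to $v_1$ and Hilbert-transform bounds that underlie Proposition~\ref{prop:estimation terme quadratique}.
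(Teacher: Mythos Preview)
Your outline is close in spirit to the paper's argument, but there is one structural gap that breaks the chain. You telescope at the \emph{symmetrised} level: $I_\hbar=\int[\hbar^{-1}\DDelta(f\sqrt{M_*})][\hbar^{-1}\DDelta(g\sqrt{M_*})]W_\hbar$ versus $I_0=\int[-k\cdot T(f)][-k\cdot T(g)]W_0$. After expanding only the $g$-factor, the dielectric and the $\delta$, the surviving ``main'' piece is $\int[\hbar^{-1}\DDelta(f\sqrt{M_*})][-k\cdot T(g)]W_0$, which is \emph{not} $I_0$; the mismatch is $\int\hbar S_f\cdot[-k\cdot T(g)]W_0$. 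This term does not carry the factor $\hbar^{-1}\DDelta(f\sqrt{M_*})$ you need for $\|f\|_\hbar$, and $S_f$ involves second derivatives of $f$ that $\|f\|_\hbar$ does not control (recall $\|f\|_0\lesssim\|f\|_\hbar$ is false in general; Proposition~\ref{prop: decroissance des normes} only gives the opposite monotonicity). Your centred shift $v_i\mapsto v_i\mp\hbar k/2$ does not save this: it moves $f_1$ to $f(v_1-\hbar k/2)$, and expanding that shift again costs a derivative of $f$.

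The paper circumvents this by never symmetrising the $f$-side. It keeps only the half-difference $(f_1\sqrt{M_2}-f_1'\sqrt{M_2'})$, Taylor-expands $(g_1h_2-g_1'h_2')$ to first order (remainder $\mathfrak{R}_\hbar=O(\hbar\|f\|_\hbar\|g\|_3\|h\|_3)$ by Cauchy--Schwarz as you describe), and then performs the \emph{asymmetric} change of variables $v_1\mapsto v_1$, $v_2\mapsto v_2'$ separately on the two pieces of the $f$-difference. This produces a \emph{naked} $f_1$ times a discrete difference of the smooth remainder, which is then Taylor-expanded once more at cost $O(\hbar)$ in $\|g\|_4\|h\|_4$---all derivatives land on $g,h,\sqrt{M},\e_0$, never on $f$. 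The result is matched against $\int f_1\,(\nabla_1-\tfrac{v_1}{2})\cdot[\cdots]$, i.e.\ the \emph{unsymmetrised} integration-by-parts form of $\mathcal{L}_0 g$. Your plan can be repaired along the same lines by inserting exactly this change-of-variables/IBP step to shift the residual discrete derivative off $f$ before comparing with $\mathcal{L}_0$; without it, the claim that ``each term carries a $\hbar^{-1}\DDelta(f\sqrt{M_*})$ factor'' is false.
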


\begin{proof}
	It is sufficient treat for any $f,g,h,p$ satisfying 
	\[\|g\|_5+\|h\|_5 <\infty,~\|p\|_5\ll 1,\]
	the operator $\tilde{Q}_\hbar(p,h,g)$ such that
	\begin{equation} \tilde{Q}_\hbar(p,h,g)(v_1):=\int\left(g_1h_2-g_1'h_2'\right)\frac{\sqrt{M_2}\V(k)^2\delta_{k\cdot(v_2-v_1-\hbar k)}}{\hbar^2|\e_\hbar(F_p,k,v_1)|^2}\ud{k}\ud{v_2} .
	\end{equation}
	
	We want to prove that there exists a limiting operator $\tilde{Q}$ such that for any test function $f$,
	\begin{equation}
		\int f_1\tilde{Q}_\hbar(p,h,g)(v_1)\ud{v_1} =\int f_1\tilde{Q}_0(p,h,g)_1 \ud{v_1}+O\left(\hbar \|f\|_\hbar(1+\|p\|_5)^4\|h\|_5\|g\|_5\right).
	\end{equation}
	
	\step{1} We begin by the identification of $\tilde{Q}_0$	
	\begin{multline*}
	\int \left(f_1\sqrt{M_2}-f_1'\sqrt{M_2'}\right)\left(g_1h_2-g_1'h_2'\right)\frac{\V(k)^2\delta_{k\cdot(v_2-v_1-\hbar k)}}{\hbar^2|\e_\hbar(F_p,k,v_1)|^2}\ud{k}\ud{v_1}\ud{v_2}\\
	=\int \left(f_1\sqrt{M_2}-f_1'\sqrt{M_2'}\right)\left(h_2\nabla g_1-g_1\nabla h_2\right)\cdot\frac{k\V(k)^2\delta_{k\cdot(v_2-v_1-\hbar k)}}{\hbar|\e_0(F_p,-k,v_2)|^2}\ud{k}\ud{v_1}\ud{v_2}+\mathfrak{R}_{1,\hbar} +\int_0^1 \mathfrak{R}_{2,\hbar}^s\ud{s},
	\end{multline*}
	where, denoting $v_{1,s}:=v_1+s\hbar k$ and $v_{2,s}:=v_2-s\hbar k$, we define the remaining term as
	\begin{gather*}
		\mathfrak{R}_{1,\hbar}:=\int\left(f_1\sqrt{M_2}-f_1'\sqrt{M_2'}\right)\left(g_1h_2-g_1'h_2'\right)\frac{\V(k)^2\delta_{k\cdot(v_2-v_1-\hbar k)}}{\hbar^2}\left(\tfrac1{|\e_\hbar(F_p,k,v_1)|^2}-\tfrac1{|\e_0(F_p,k,v_1)|^2}\right)\ud{k}\ud{v_1}\ud{v_2}\\
		\mathfrak{R}_{2,\hbar}^s:=\int \Big(f_1\sqrt{M_2}-f_1'\sqrt{M_2'}\Big)\Big(h_{2,s}\nabla^2 g_{1,s}-2\nabla g_{1,s}\nabla h_{2,s} +g_{1,s}\nabla^2h_{2,s}\Big):\frac{k^{\otimes 2}\V(k)^2\delta_{k\cdot(v_2-v_1-\hbar k)}}{|\e_0(F_p,-k,v_2)|^2}\ud{k}\!\ud{v_1}\!\ud{v_2}.
	\end{gather*}
	
	Using the change of variable $(v_1,v_2)\mapsto(v_1,v_2')$ and $(v_1,v_2)\mapsto(v_1',v_2)$
	\begin{align*}
		&\int \left(f_1\sqrt{M_2}-f_1'\sqrt{M_2'}\right)\left(h_2\nabla g_1-g_1\nabla h_2\right)\cdot\frac{k\V(k)^2\delta_{k\cdot(v_2-v_1-\hbar k)}}{\hbar|\e_0(F_p,-k,v_2)|^2}\ud{k}\ud{v_1}\ud{v_2}\\
		&=\int f_1\left( \frac{\sqrt{M''_2}\left(h''_2\nabla g_1-g_1\nabla h''_2\right)}{\hbar|\e_0(F_p,-k,v''_2)|^2}-\frac{\sqrt{M'_2}\left(h_2\nabla g''_1-g''_1\nabla h_2\right)}{\hbar|\e_0(F_p,-k,v_2)|^2}\right)\cdot k\delta_{k\cdot(v_2-v_1)}\V(k)^2\ud{k}\ud{v_1}\ud{v_2}\\
		&=\int f_1k\cdot\left(\nabla_1+\nabla_2-\tfrac{v_2}{2}\right) \frac{\sqrt{M_2}\left(h_2\nabla g_1-g_1\nabla h_2\right)}{|\e_0(F_p,-k,v_2)|^2}\cdot k\delta_{k\cdot(v_2-v_1)}\V(k)^2\ud{k}\ud{v_1}\ud{v_2} + \int_0^1 \mathfrak{R}^s_{3,\hbar},
	\end{align*}
	where
	\begin{equation*}
		\mathfrak{R}^s_{3,\hbar} := (1-s)\int f_1\frac{\ud^2}{\ud s^2}\bigg(\frac{\sqrt{M_{2,(1-2s)}}\left(h_{2,-s}\nabla g_{1,-s}-g_{1,-s}\nabla h_{2,-s}\right)}{|\e_0(F_p,-k,v_{2,-s})|^2}\bigg)\cdot k\delta_{k\cdot(v_2-v_1)}\V(k)^2\ud{k}\ud{v_1}\ud{v_2}.
	\end{equation*}
	
	Finally, using that $k\cdot v_1=k\cdot v_2$,
	\begin{align*}
		&\int f_1k\cdot\left(\nabla_1+\nabla_2-\tfrac{v_2}{2}\right) \frac{\sqrt{M_2}\left(h_2\nabla g_1-g_1\nabla h_2\right)}{|\e_0(F_p,-k,v_2)|^2}\cdot k\delta_{k\cdot(v_2-v_1)}\V(k)^2\ud{k}\ud{v_1}\ud{v_2} \\
		=&\int \tfrac{f_1}{\sqrt{M_1}}k\cdot\left(\nabla_1+\nabla_2\right) \bigg[\frac{\sqrt{M_2 M_1}\left(h_2\nabla g_1-g_1\nabla h_2\right)}{|\e_0(F_p,-k,v_2)|^2}\cdot k\delta_{k\cdot(v_2-v_1)}\bigg]\V(k)^2\ud{k}\ud{v_1}\ud{v_2}\\
		=&\int f_1 M_1^{-\frac12}\nabla_1\cdot\bigg(\int  \left(h_2\nabla g_1-g_1\nabla h_2\right)\frac{k\otimes k\V(k)^2\delta_{k\cdot(v_2-v_1)}}{|\e_0(F_p,-k,v_2)|^2}\sqrt{M_2 M_1}\ud{k}\ud{v_2}\bigg)\ud{v_1},
	\end{align*}
	where we use that $\left(\nabla_1+\nabla_2\right)\delta_{k\cdot(v_2-v_1)} = 0$. We deduce that 
	\[\tilde{Q}_0(p,h,g)_1 = M_1^{-1/2}\nabla_1\cdot\bigg(\int  \left(h_2\nabla g_1-g_1\nabla h_2\right)\frac{k\otimes k\V(k)^2\delta_{k\cdot(v_2-v_1)}}{|\e_0(F_p,-k,v_2)|^2}\sqrt{M_2 M_1}\ud{k}\ud{v_2}\bigg).\]
	
	\step{2} We estimate now the remaining parts $\mathfrak{R}_{1,\hbar}$, $\mathfrak{R}^s_{2,\hbar}$ and $\mathfrak{R}^s_{2,\hbar}$. Using the orthogonal decomposition $v=v^\para+v^\perp$ with $v^\para \in\mathbb{R}k$ and Proposition \ref{prop:borne sur epsilone mieux} and \ref{prop:estimation terme quadratique}, we obtain that
	\begin{align}
	\int \Big|h_{2,s}\nabla^2 g_{1,s}-2\nabla g_{1,s}\nabla h_{2,s} +g_{1,s}&\nabla^2h_{2,s}\Big|^2|k|^{4}|\V(k)|^2\delta_{k\cdot(v_2-v_1-\hbar k)}\ud{k}\ud{v_1}\ud{v_2}\\
	\lesssim \int& |k|^3|\V(k)|^2\|g\|^2_2\|h\|^2_{L^\infty(\ud v_1^{|\hspace{-1pt}|},H^2(v_1^\perp))}\ud k \lesssim O(\|g\|_3^2\|h\|_3^2),\nonumber\\
	\int \left(f_1\sqrt{M_2}-f_1'\sqrt{M_2'}\right)^2&\frac{\V(k)|^2\delta_{k\cdot(v_2-v_1-\hbar k)}}{\hbar^2}\ud{k}\ud{v_1}\ud{v_2}\lesssim \|f\|_\hbar^2,\\
	\int \left(g_1h_2-g_1'h_2'\right)^2\bigg(\tfrac1{|\e_\hbar(F_p,k,v_1)|^2}-&\tfrac1{|\e_0(F_p,k,v_1)|^2}\bigg)^2\frac{\V(k)^2\delta_{k\cdot(v_2-v_1-\hbar k)}}{\hbar^2}\ud{k}\ud{v_1}\ud{v_2}\\
	\lesssim \hbar\int |k||\V(k)|^2&\|g\|^2_1\|h\|^2_{L^\infty(\ud v_1^{|\hspace{-1pt}|},H^1(v_1^\perp))}(1+\|p\|_2)\ud k \lesssim O(\hbar\|g\|_3^2\|h\|_3^2(1+\|p\|_2)).\nonumber
	\end{align}
	
	Using Cauchy--Schwartz inequality, we deduce that $\mathfrak{R}_{1,\hbar}+\mathfrak{R}^s_{2,\hbar}=O(\hbar\|f\|_\hbar\|g\|_3\|h\|_3(1+\|p\|_2))$. The third error term can be treated in the same way, and $\mathfrak{R}^s_{3,\hbar}=O(\hbar\|f\|_\hbar\|g\|_4\|h\|_4(1+\|p\|_4))$. This conclude the proof.
\end{proof}

\begin{proof}[Proof of Theorem \ref{thm:semi-classical limit}]
	First, as $\|g_0\|_r<\eta$ and $\hbar<\hbar_0$, one can applied Theorem \ref{thm:Cauchy theory}, and for any $\hbar<\hbar_0$, there exists $g_\hbar$ solution of \eqref{eq:QLB}. In addition one have the following bound: 
	\[\sup_{t\geq 0}\|g_\hbar(t)\|_r^2+\int_0^\infty\|g_\hbar(t)\|_{\hbar,r}^2\ud{t}\lesssim \eta^2.\]
	
	We deduce that the family $(g_\hbar)_{\hbar<\hbar_0}$ is weakly compact in $\mathcal{C}^0_b(\mathbb{R}^+,\mathcal{H}^r)\cap L^2(\mathcal{H}_\hbar^r)$ for all $\hbar>0$. Hence, up to the extraction of a subsequence $\hbar'$, $(g_{\hbar'})$ converges weakly to some function $g_\infty\in\mathcal{C}^0_b(\mathbb{R}^+,\mathcal{H}^r)\cap L^2(\mathcal{H}_0^r)$.
	
	We need to identify the limit. In \cite{DW}, the authors show the existence of $g_\infty \in\mathcal{C}^0_b(\mathbb{R}^+,\mathcal{H}^r)\cap L^2(\mathcal{H}_0^r)$, the unique solution of \eqref{eq:CLB}-equation, with initial data $g_\infty(t=0):=g_0$\footnote{In reality, their results is a little bit weaker, but one can easily adapt their proof.}.
	
	We perform again an energy estimation:
	\begin{align*}
	\frac{\ud}{\ud{t}}\|g_\hbar(t)-g_\infty(t)\|^2 =& -2\int (g_\hbar(t)-g_\infty(t))\mathcal{L}_\hbar(g_\hbar(t)-g_\infty(t))-2\int (g_\hbar(t)-g_\infty(t))(\mathcal{L}_0\mathcal{L}_\hbar)g_\infty(t)\\
	&+2\int (g_\hbar(t)-g_\infty(t))\left(\mathcal{Q}_\hbar(g_\hbar(t),g_\hbar(t))-\mathcal{Q}_\hbar(g_\infty(t),g_\infty(t))\right)\\
	&+2\int (g_\hbar(t)-g_\infty(t))\left(\mathcal{Q}_\hbar(g_\infty(t),g_\infty(t))-\mathcal{Q}_\infty(g_\infty(t),g_\infty(t))\right)
	\end{align*}
	
	Using Proposition \ref{prop:dissipation estimates}, \ref{prop:estimation terme quadratique} and \ref{prop:convergence operateur}, and using the same strategy than in the same way than in the proof of Proposition \ref{prop:condition picard}, 
	\[\frac{\ud}{\ud{t}}\|g_\hbar(t)-g_\infty(t)\|^2 +(1-\eta)\|g_\hbar(t)-g_\infty(t)\|^2_\hbar \lesssim \hbar\|g_\hbar(t)-g_\infty(t)\|_\hbar \|g_\infty\|_5\left(1+\|g_\infty\|_5\right)^4.\]
	Hence
	\[\frac{\ud}{\ud{t}}\|g_\hbar(t)-g_\infty(t)\|^2 +(1-2\eta)\|g_\hbar(t)-g_\infty(t)\|^2_\hbar \lesssim \frac{\hbar^2}{\eta} \|g_\infty\|^2_5\left(1+\|g_\infty\|_5\right)^8.\]
	
	As $r\geq 5$, we can applied the Gronwall inequality, and obtain
	\[\|g_\hbar(t)-g_\infty(t)\|\leq \hbar t \|g_\infty\|_{L^2_t(\mathcal{H}^5_\hbar)\cap L^\infty_t(\mathcal{H}^5)}(1+\|g_\infty\|_{L^2_t(\mathcal{H}^5_\hbar)\cap L^\infty_t(\mathcal{H}^5)})^4\]
	
	This conclude the proof.		
\end{proof}

	\section{Short time existence theorem}\label{sec:cas d=2 ou 3}	
	In order to obtain short time result for arbitrary large initial datum, one use the trivial bound
	\[\|f\|_\hbar\lesssim \frac{1}{\hbar}\|f\|\]
	Then, we can prove the following estimation by using Proposition \ref{prop:controle dans les sobolev des termes quadratique}, 
	\begin{prop}
		We fix the dimension $d\geq 2$.	Denoting $\tilde{r}:= \max(5,r)$, for $f,g,h$ three tests functions
		with 
		\[|\e_\hbar(M+\sqrt{M}h,k,v)|\geq C_h^{-1},\]
		we have the following bounds
		\[\begin{aligne}{c}
		\left|\int\left(-\nabla-\frac{v}{2}\right)^r\left(\nabla-\frac{v}{2}\right)^rf \mathcal{Q}_\hbar(g,h)\right|
		\lesssim\frac{1}{\hbar^2} \left(1+\|h\|_{\tilde{r}}\right)^{r+1}\|g\|_{\tilde{r}}\|h\|_{\tilde{r}}\left\|f\right\|_{\tilde{r}}\\
		\int\left(-\nabla-\frac{v}{2}\right)^r\left(\nabla-\frac{v}{2}\right)^rf \mathcal{L}_\hbar g \lesssim \frac{1}{\hbar^2}\|g\|_{\tilde{r}}\|\left\|f\right\|_{\tilde{r}}\\
		\multicolumn{1}{l}{\left|\int\left(-\nabla-\frac{v}{2}\right)^r\left(\nabla-\frac{v}{2}\right)^rf \left[\mathcal{Q}_\hbar(g,h)-\mathcal{Q}_\hbar(\tilde{g},\tilde{h})\right]\right|}\\
		\multicolumn{1}{r} {\hspace{2cm}\lesssim \frac{1}{\hbar^2}C_h^{2+r} \left(1+\|h\|_{\tilde{r}}+\|\tilde{h}\|_{\tilde{r}}\right)^{r+1}\Big(\|g-\tilde{g}\|_{\tilde{r}}\left(\|h\|_{\tilde r}+\|\tilde{h}\|_{\tilde r}\right)+\left(\|g\|_{\tilde r}+\|\tilde{g}\|_{\tilde r}\right)\|h-\tilde{h}\|_{\tilde{r}}\Big)\left\|f\right\|_{\tilde r}}
		\end{aligne}\]
	\end{prop}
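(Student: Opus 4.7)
The plan is to derive all three bounds directly from Proposition \ref{prop:controle dans les sobolev des termes quadratique} by trading the weighted norms $\|\cdot\|_{\hbar,r}$ for the standard Sobolev norms via the trivial inequality
\begin{equation*}
\|f\|_\hbar \lesssim \frac{1}{\hbar}\|f\|,\qquad \|f\|_{\hbar,r}\lesssim \frac{1}{\hbar}\|f\|_r,
\end{equation*}
which I would verify first. Writing out the definition of $\|f\|_\hbar^2$ and using $(f_1-f_1')^2\leq 2(f_1^2+f_1'^2)$ together with $(\sqrt{M_2}-\sqrt{M_2'})^2\leq 2(M_2+M_2')$ and the bound $|\e_\hbar(M,k,v)|\simeq 1$ from Proposition \ref{prop:borne sur epsilone mieux}, the integrand is controlled pointwise by $\hbar^{-2}$ times $(f_1^2+f_1'^2)$ multiplied by a kernel integrable in $(k,v_2)$ (since $\V(k)^2/|k|$ is integrable). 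Summing over derivatives of order at most $r$ yields the Sobolev version.

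Next, for the bound on $\mathcal{Q}_\hbar(g,h)$, Proposition \ref{prop:controle dans les sobolev des termes quadratique} gives
\begin{equation*}
\Big|\int(-\nabla-\tfrac{v}{2})^r(\nabla-\tfrac{v}{2})^r f\,\mathcal{Q}_\hbar(g,h)\Big| \lesssim (1+\|h\|_{\tilde r-1})^{r+1}\big(\|g\|_{\hbar,r}\|h\|_{\tilde r-1}+\|g\|_{\tilde r-1}\|h\|_{\hbar,r}\big)\|f\|_{\hbar,r}.
\end{equation*}
Each term on the right involves exactly two factors of $\|\cdot\|_{\hbar,r}$ (one on $f$, one on either $g$ or $h$); applying the crude bound to both produces the prefactor $\hbar^{-2}$ together with the desired Sobolev norms. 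The bound on $\mathcal{L}_\hbar$ is handled identically: using the commutation identity of Proposition \ref{prop:controle dans les sobolev des termes quadratique} and Cauchy--Schwarz against the quadratic form associated to $\|\cdot\|_\hbar$, one obtains $|\int f\,\mathcal{L}_\hbar g|\lesssim \|f\|_{\hbar,r}\|g\|_{\hbar,r}$, and the crude bound converts this into $\hbar^{-2}\|f\|_{\tilde r}\|g\|_{\tilde r}$.

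The difference estimate follows the same template, starting from the analogous inequality of Proposition \ref{prop:controle dans les sobolev des termes quadratique}. The new ingredient is the $C_h^{r+2}$ prefactor: the derivatives $(\partial/\partial v_1)^{r'}|\e_\hbar(F_h,k,v_1)|^{-2}$ appearing in the Fa\`a di Bruno expansion of that proof now produce up to $r+2$ factors of $|\e_\hbar(F_h,k,v_1)|^{-1}\leq C_h$, instead of being uniformly bounded as they were under the smallness assumption $\|h\|_3<C_0$ that guaranteed $|\e_\hbar|\simeq 1$. Each of the two $\|\cdot\|_{\hbar,r}$ factors on the right is again traded for $\hbar^{-1}\|\cdot\|_{\tilde r}$, producing the overall $\hbar^{-2}$.

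The main obstacle is purely bookkeeping: one must track the powers of $C_h$ arising when the smallness assumption on $h$ used in Proposition \ref{prop:controle dans les sobolev des termes quadratique} is replaced by the weaker non-degeneracy $|\e_\hbar(F_h,k,v)|\geq 1/C_h$. Once this is traced through the Fa\`a di Bruno step and through the terms $\V(k)\Lambda(\cdot)$ that appear in the proof of Proposition \ref{prop:controle dans les sobolev des termes quadratique}, the remainder of the argument is identical to the small-data case.
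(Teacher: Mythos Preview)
Your proposal is correct and follows essentially the same approach as the paper: the paper's proof consists of exactly the two ingredients you identify, namely the trivial bound $\|f\|_\hbar\lesssim\hbar^{-1}\|f\|$ and an application of Proposition~\ref{prop:controle dans les sobolev des termes quadratique}, with the $C_h^{r+2}$ factor arising from the Fa\`a di Bruno step once the smallness assumption on $h$ is replaced by the lower bound $|\e_\hbar(F_h,k,v)|\geq 1/C_h$. Your sketch is in fact more detailed than the paper's one-line justification.
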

		
	Fix $g_0>0$ a probability density and a constant $C_{g_0}$ with the following bounds
	\begin{equation}
	|\e_\hbar(M+\sqrt{M}g_0,v,k)|\geq C_{g_0}^{-1},~~\|g_0\|_r\leq C_{g_0}.
	\end{equation}
	
	We define the set
	\[\tilde{\mathcal{E}}_{r}(T,g_0,C_{g_0})=\left\{h\in \mathcal{C}([0,T],\mathcal{H}^r),~ h(t=0)=g_0,~\|h-g_0\|_{L^\infty([0,T],\mathcal{H}_r)}\leq (2\|\mathcal{V}\|_{L^1}^{-1}C_{g_0}\right\}\]
	
	\begin{prop}\label{prop:condition picard temps court}
		We fix the Planck constant $\hbar$ and an initial data $g_0$ (associated with the bound $C_{g_0}$). We define $\mathcal{F}_{\hbar,T}$ the application which associate to $\tilde{\mathcal{E}}_{r}(T,g_0,C_{g_0})$ the solution $g$ on $[0,T]$ of the equation
		\begin{equation}
		\left\{\begin{split}
		\partial_t g(t) +\mathcal{L}_\hbar g(t) &= \mathcal{Q}_\hbar(g(t),h(t))\\
		g(t=0)&=h(t=0).
		\end{split}\right.
		\end{equation}
		
		Then for $r>5$, there exists a constant $c_0$ independant of $\hbar,C_{g_0}$ such that for any time $T<\frac{c_0\hbar^2}{C_{g_0}^{2r+4}}$,
		\begin{itemize}
			\item for all $h\in\tilde{\mathcal{E}}_{r}(T,g_0,C_{g_0})$, the dielectric constant is bounded from below:
			\[|\e_\hbar(M+\sqrt{M}h(t),v,k)|\geq \frac{1}{2C_{g_0}},\]
			hence, the application $\mathcal{F}_{\hbar,T}$ is well defined on $\tilde{\mathcal{E}}_{r}(T,g_0,C_{g_0})$,
			\item the set $\tilde{\mathcal{E}}_{r}(T,g_0,C_{g_0})$ is stable under the action of ${\mathcal{F}}_\hbar$,
			\item the application $\mathcal{F}_\hbar$ is contractive on $\tilde{\mathcal{E}}_{r}(T,g_0,C_{g_0})$.
		\end{itemize}
	\end{prop}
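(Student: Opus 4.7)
The plan is a Picard fixed point argument in $\tilde{\mathcal{E}}_r(T,g_0,C_{g_0})$, following closely the template of the proof of Proposition \ref{prop:condition picard}, but replacing the dissipation estimate of Proposition \ref{prop:dissipation estimates} by the crude bound $\|\cdot\|_\hbar \lesssim \hbar^{-1}\|\cdot\|$. The price is the factor $\hbar^{-2}$ in front of all trilinear estimates, which forces the short time scale $T\lesssim \hbar^2/C_{g_0}^{2r+4}$.

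First I would verify the first bullet, i.e.\ the lower bound on $\e_\hbar(M+\sqrt{M}h(t),v,k)$ for any $h\in\tilde{\mathcal{E}}_r(T,g_0,C_{g_0})$. By Proposition~\ref{prop:borne sur epsilone mieux}.\eqref{ii}, for any $t\in[0,T]$,
\[
|\e_\hbar(M+\sqrt{M}h(t),k,v) - \e_\hbar(M+\sqrt{M}g_0,k,v)|\lesssim \|h(t)-g_0\|_3\lesssim \|h(t)-g_0\|_r.
\]
By definition of $\tilde{\mathcal{E}}_r(T,g_0,C_{g_0})$, the right-hand side is bounded by a small multiple of $C_{g_0}^{-1}$ (up to choice of the absolute constant hidden in the definition of the set, or equivalently shrinking $c_0$), so that we obtain $|\e_\hbar(M+\sqrt{M}h(t),v,k)|\geq 1/(2C_{g_0})$. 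This ensures that $\mathcal{F}_{\hbar,T}$ is well defined on $\tilde{\mathcal{E}}_r$, because the preceding Proposition, applied with $C_h\leq 2C_{g_0}$, gives meaningful a priori estimates for the linear problem solved by $g=\mathcal{F}_{\hbar,T}(h)$.

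For the stability step I would perform the energy estimate
\[
\frac{\ud}{\ud t}\|g(t)\|_r^2 = -2\int \left(-\nabla-\tfrac{v}{2}\right)^r\left(\nabla-\tfrac{v}{2}\right)^r g\,\mathcal{L}_\hbar g+2\int\left(-\nabla-\tfrac{v}{2}\right)^r\left(\nabla-\tfrac{v}{2}\right)^r g\,\mathcal{Q}_\hbar(g,h),
\]
and apply the crude bounds of the Proposition stated immediately before the statement, with $\tilde r = r$ since $r\geq 5$:
\[
\frac{\ud}{\ud t}\|g(t)\|_r^2 \lesssim \frac{1}{\hbar^2}\bigl(1+\|h(t)\|_r\bigr)^{r+1}\|h(t)\|_r\|g(t)\|_r^2+\frac{1}{\hbar^2}\|g(t)\|_r^2.
\]
Since $\|h(t)\|_r\lesssim C_{g_0}$ on $[0,T]$, a Gronwall integration gives
\[
\|g(t)\|_r\leq \|g_0\|_r\exp\!\left(\frac{Ct\,C_{g_0}^{r+2}}{\hbar^2}\right),
\]
and therefore $\|g(t)-g_0\|_r\lesssim tC_{g_0}^{r+3}/\hbar^2$ on a small interval. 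Choosing $c_0$ small enough and $T\leq c_0\hbar^2/C_{g_0}^{2r+4}$ ensures $\|g-g_0\|_{L^\infty([0,T],\mathcal{H}^r)}\leq C_{g_0}/(2\|\mathcal{V}\|_{L^1})$, so $\mathcal{F}_{\hbar,T}$ leaves $\tilde{\mathcal{E}}_r(T,g_0,C_{g_0})$ invariant. Continuity in $t$ of $g$ in $\mathcal{H}^r$ follows from the same estimate applied to $g(t)-g(t')$.

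For contractivity, I would set $g=\mathcal{F}_{\hbar,T}(h)$, $\tilde g=\mathcal{F}_{\hbar,T}(\tilde h)$, write
\[
\partial_t(g-\tilde g)+\mathcal{L}_\hbar(g-\tilde g)=\mathcal{Q}_\hbar(g,h)-\mathcal{Q}_\hbar(\tilde g,\tilde h),
\]
test against $(-\nabla-v/2)^r(\nabla-v/2)^r(g-\tilde g)$, and use the third inequality of the preceding Proposition (with $C_h\leq 2C_{g_0}$) to obtain
\[
\frac{\ud}{\ud t}\|g-\tilde g\|_r^2\lesssim \frac{C_{g_0}^{r+2}(1+C_{g_0})^{r+1}}{\hbar^2}\Bigl(C_{g_0}\|g-\tilde g\|_r^2+C_{g_0}\|g-\tilde g\|_r\|h-\tilde h\|_r\Bigr).
\]
Since $g(0)=\tilde g(0)=g_0$, a Gronwall argument on $[0,T]$ yields
\[
\|g-\tilde g\|_{L^\infty([0,T],\mathcal{H}^r)}\leq \frac{CT\,C_{g_0}^{2r+4}}{\hbar^2}\,\|h-\tilde h\|_{L^\infty([0,T],\mathcal{H}^r)},
\]
and the right-hand side is bounded by $\tfrac12\|h-\tilde h\|_{L^\infty([0,T],\mathcal{H}^r)}$ provided $c_0$ is chosen small enough. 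The main (and essentially only) obstacle is bookkeeping the powers of $C_{g_0}$, which come both from the explicit $C_h^{r+2}$ factor in the contractivity estimate and from the $(1+\|h\|_{\tilde r})^{r+1}$ factor evaluated on $\tilde{\mathcal{E}}_r$; this is what fixes the exponent $2r+4$ in the short time scale. The Banach fixed point theorem then produces the unique solution on $[0,T]$.
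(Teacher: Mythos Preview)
Your overall strategy matches the paper's: replace the dissipation estimate by the crude bound $\|\cdot\|_\hbar\lesssim\hbar^{-1}\|\cdot\|$, apply the resulting $\hbar^{-2}$ trilinear estimates, and close a Picard argument on the short time scale $T\lesssim\hbar^2/C_{g_0}^{2r+4}$.

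There is, however, a genuine gap in your stability step. From the Gronwall bound
\[
\|g(t)\|_r\leq \|g_0\|_r\exp\!\left(\frac{Ct\,C_{g_0}^{r+2}}{\hbar^2}\right)
\]
you write ``and therefore $\|g(t)-g_0\|_r\lesssim tC_{g_0}^{r+3}/\hbar^2$''. This does not follow: a bound on $\|g(t)\|_r$ controls only $\bigl|\|g(t)\|_r-\|g_0\|_r\bigr|$, not $\|g(t)-g_0\|_r$. The paper closes this by running the energy estimate \emph{directly} on $g(t)-g_0$, i.e.\ testing the equation against $(-\nabla-\tfrac{v}{2})^{r'}(\nabla-\tfrac{v}{2})^{r'}(g(t)-g_0)$ to obtain
\[
\frac{\ud}{\ud t}\|g(t)-g_0\|_r^2\ \lesssim\ \frac{C_{g_0}^{2r+5}}{\hbar^2}\,\|g(t)-g_0\|_r,
\]
which integrates to the linear-in-$t$ bound you need. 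To make this licit (one has to know $\|g(t)\|_r\lesssim C_{g_0}$ inside the estimate), the paper runs a continuity/bootstrap argument: it introduces the maximal time $T_\eta$ on which $\|g-g_0\|_r,\|g'-g_0\|_r\leq C_{g_0}/2$ and $\|g-g'\|_r\leq\tfrac12\|h-h'\|$, proves the differential inequalities on $[0,T_\eta)$, and concludes $T_\eta\geq c_0\hbar^2/C_{g_0}^{2r+4}$. Your two-step approach (first a closed Gronwall on $\|g\|_r$, then the estimate on $\|g-g_0\|_r$) would also work and avoids the bootstrap, but you must actually write the second energy estimate; it is not a consequence of the first.
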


	\begin{proof}
		We begin by the lower bound of the dielectric constant:
		\begin{multline*}
		|\e_\hbar(M+\sqrt{M}h(t),v,k)|\\[-10pt]
		\geq |\e_\hbar(M+\sqrt{M}g_0,v,k)|-\V(k)\left|\int_{1}^2\ud{s}\int \frac{\hat{k}\cdot\nabla\left[\sqrt{M}(h(t)-g_0)\right](v_* -s\hbar k)}{\hat{k}\cdot(v-v_*)-i0}\ud{v_*}\right|\geq \frac{1}{2C_{g_0}}.
		\end{multline*}

		The proof looks like the proof of Proposition \ref{prop:condition picard}. 
		
		Fix $h,h'\in\tilde{\mathcal{E}}_{r}(T,g_0,C_{g_0})$, and  denote $g:=\mathcal{F}_\hbar(h)$, $g':=\mathcal{F}_\hbar(h')$. For $\eta\in(0,1)$, we define $T_\eta$ as the supremum of 
		\[\left\{\tau~\vert~\forall \tau'\in(0,\tau),~\|g(\tau')-g_0\|_r,\|g'(\tau')-g_0\|_r\leq \frac{C_{g_0}}{2},\|g(\tau')-g'(\tau')\|_r\leq \frac{\|h-h'\|_{L^\infty([0,T_\eta],\mathcal{H}^r)}}{2}\right\}.\] For $r'\leq r$ and $t\leq T$
		\begin{multline*}
		\frac{d}{dt}\left\|(g(t)-g_0)\right\|_{r'}^2 =-2\sum_{r''=0}^{r'} \int \left(-\nabla-\frac{v}{2}\right)^{r'}\left(\nabla-\frac{v}{2}\right)^{r'}(g(t)-g_0)\mathcal{L}_\hbar g(t)\\
		-2\int \left(-\nabla-\frac{v}{2}\right)^{r'}\left(\nabla-\frac{v}{2}\right)^{r'}(g(t)-g_0)\mathcal{Q}_\hbar (g(t),h(t))
		\lesssim\frac{C_{g_0}^{2r'+5}}{\hbar^2}\|g(t)-g_0\|_{r'}.
		\end{multline*}
		Summing on $r'$, we obtain that
		\[\frac{\ud}{\ud t}\|g(t)-g_0\|_r^2\lesssim \frac{C_{g_0}^{2r+5}}{\hbar^2}\|g(t)-g_0\|_r ,\]
		and the same inequality hold for $g'$. In the same way, there exists some constant $C$ such that
		\begin{align*}\frac{\ud}{\ud t}\|g(t)-g'(t)\|^2_r&\leq C \frac{C_{g_0}^{2r+4}}{\hbar^2}\|g(t)-g'(t)\|_r(\|g(t)-g'(t)\|_r+\|h(t)-h'(t)\|_r)\\
		&\leq  \frac{3CC_{g_0}^{2r+4}}{2\hbar^2}\left(\|g(t)-g'(t)\|^2_r+\|h-h'\|^2_{L^\infty([0,T_\eta],\mathcal{H}^r)}\right).\end{align*}
		Using the Gronwall lemma, one have
		\begin{gather*}
		\varpi_1(t):=\max\left(\|g(t)-g_0\|_r,\|g'(t)-g_0\|_r\right)\lesssim\frac{C_{g_0}^{2r+5}}{\hbar^2}t,\\
		\varpi_2(t):=\|g(t)-g'(t)\|_r\lesssim\left[\exp\left(\frac{3CC_{g_0}^{2r+4}\eta}{2\hbar^2}t\right)-1\right]^{\frac{1}{2}}\|h-h'\|_{L^\infty([0,T_\eta],\mathcal{H}^r)}.\end{gather*}
		
		By continuity of $\varpi_1$ and $\varpi_2$, at $T_\eta$, either $\varpi_1(T_\eta)=C_{g_0}/2$ or $\varpi_2(T_\eta)=\|h-h'\|_{L^\infty([0,T_\eta],\mathcal{H}^r)}/2$. Hence, there exists a constant $c_0$ such that $T_\eta>\frac{c_0\hbar^2}{C_{g_0}^{2r+4}}$.
	\end{proof}
	
	We conclude by a Picard fix-point argument.
	
	{\large\textbf{Acknowledgment}}: The authors thanks Mitia Duerinckx, Nicolas Rougerie and Matthieu Ménard for stimulating and fruitful discussions. The author acknowledges financial support from the European Union (ERC, PASTIS, Grant Agreement n$^\circ$101075879).\footnote{{Views and opinions expressed are however those of the author only and do not necessarily reflect those of the European Union or the European Research Council Executive Agency. Neither the European Union nor the granting authority can be held responsible for them.}}
	
	\bibliographystyle{alpha}

\end{document}